\documentclass[11pt]{amsart}
\usepackage{soul}
\usepackage{cancel}
\usepackage[margin=1in]{geometry}
\usepackage[T1]{fontenc}
\usepackage[utf8]{inputenc}
\usepackage{amsmath,amssymb,amsthm,mathtools}
\usepackage{bbm}
\usepackage[shortlabels]{enumitem}
\usepackage{tikz}
\usetikzlibrary{decorations.pathreplacing,decorations.pathmorphing,arrows.meta}
\usepackage{caption}
\PassOptionsToPackage{hyphens}{url}
\usepackage[colorlinks=true, linkcolor=blue, citecolor=blue,urlcolor=blue]{hyperref}

\allowdisplaybreaks
\numberwithin{equation}{section}
\numberwithin{figure}{section}

\newtheorem{theorem}{Theorem}[section]
\newtheorem{corollary}[theorem]{Corollary}
\newtheorem{lemma}[theorem]{Lemma}
\newtheorem{proposition}[theorem]{Proposition}
\theoremstyle{definition}

\theoremstyle{remark}
\newtheorem{remark}[theorem]{Remark}

\newcommand{\R}{\mathbb{R}}

\newcommand{\Z}{\mathbb{Z}}
\newcommand{\EE}{\mathbb{E}}
\newcommand{\PP}{\mathbb{P}}
\newcommand{\DD}{\mathbb{D}}
\newcommand{\rmP}{{\mathrm P}}

\newcommand{\ind}{\mathbf{1}}

\newcommand{\cB}{\mathcal{B}}

\newcommand{\cF}{\mathcal{F}}
\newcommand{\cH}{\mathcal{H}}

\newcommand{\cR}{\mathcal{R}}
\newcommand{\cS}{\mathcal{S}}
\newcommand{\cU}{\mathcal{U}}
\newcommand{\cZ}{\mathcal{Z}}
\newcommand{\fB}{\mathfrak{B}}

\newcommand{\fc}{\mathfrak{c}}
\newcommand{\fH}{\mathfrak{H}}

\newcommand{\rfree}{\mathrm{free}}
\newcommand{\rstat}{\mathrm{stat}}

\newcommand{\al}{\alpha}
\newcommand{\be}{\beta}
\newcommand{\dl}{\delta}

\newcommand{\eps}{\varepsilon}
\newcommand{\ga}{\gamma}
\newcommand{\wt}{\widetilde}
\newcommand{\DN}{\Delta_n}
\newcommand{\noi}{\noindent}

\newcommand{\AND}{\quad{\rm and}\quad}

\newcommand{\Var}{\operatorname{Var}}
\newcommand{\supp}{\operatorname{supp}}
\newcommand{\OCY}{\textup{O'Connell--Yor}}
\DeclareMathOperator{\sgn}{sgn}
\DeclareMathOperator{\Cov}{Cov}
\DeclareMathOperator{\Exp}{Exp}

\tikzset{
  brownian background/.style={
    color={rgb, 255:red, 155; green, 155; blue, 155},
    draw opacity=0.66,
    line cap=round,
    decorate,
    decoration={random steps,segment length=7pt,amplitude=1.4pt}
  }
}

\tikzset{
  >=Latex,
  lev/.style={gray!65, dashed},
  redpath/.style={line width=1.1pt, red},
  bluepath/.style={line width=1.1pt, blue},
  labr/.style={red},
  labb/.style={blue},
  ptred/.style={circle, fill=red, inner sep=1.6pt},
  ptblue/.style={circle, fill=blue, inner sep=1.6pt},
  axes/.style={line width=.9pt, black},
  braceann/.style={decorate,decoration={brace,amplitude=5pt}}
}

\title[Polymer coalescence and fluctuations via free-energy correlation profile]{Coalescence and fluctuations of O'Connell--Yor polymers\\
via the free-energy correlation profile}

\author[F.~Rassoul-Agha, X.~Shen, R.~Zhang,  and G.~Zheng]
{Firas Rassoul-Agha, Xiao Shen, Ruixuan Zhang,  and Guangqu Zheng}

\thanks{F.\ Rassoul-Agha was partially supported by National Science Foundation grant DMS-2450951 and Simons Foundation grant MPS-TSM-00013661.}

\thanks{X.\ Shen was partially supported by Simons Foundation grant MPS-TSM-00024840.}

\address{F. Rassoul-Agha,  Department of Mathematics, University of Utah, Salt Lake City,
  UT 84112, USA}
\email{firas@math.utah.edu}

\address{X. Shen, Department of Mathematics, North Carolina State University,
  Raleigh, NC 27695, USA}
\email{xshen9@ncsu.edu}

\address{R. Zhang, Department of Mathematics, University of Utah, Salt Lake City,
  UT 84112, USA}
\email{ray.zhang@math.utah.edu}

\address{G. Zheng, Department of Mathematics and Statistics, Boston University,
  Boston, MA 02215, USA}
\email{gzheng90@bu.edu}

\date{\today}
\keywords{
O'Connell--Yor polymer
$\cdot$ Burke property
$\cdot$ coalescence 
$\cdot$ exit point
$\cdot$ Gaussian integration by parts
}

\begin{document}

\begin{abstract}
We establish an identity relating the spatial derivative of a two-point free-energy correlation to two fundamental geometric quantities: the annealed probability that two independently sampled polymers, in the same environment, meet before reaching the prescribed terminal level, and the exit-point location of a single polymer. Our result is inspired in part by recent integration-by-parts work of Gu and Quastel for the KPZ equation. A crucial step in their continuous setting relies on an ingenious application of It\^o's formula, which has no direct analogue in our semi-discrete setting. Instead, we exploit intrinsic symmetries of the polymer model together with the memoryless property. 
We also provide two applications: (i) we recover the horizontal Burke property by proving that the anchored
stationary horizontal free-energy profile is a two-sided Brownian motion;
(ii)  in the zero-temperature limit, 
we obtain the corresponding identity for Brownian last-passage percolation.
\end{abstract}

\maketitle
\tableofcontents

\section{Introduction}
Among $(1+1)$-dimensional random growth models in the
Kardar--Parisi--Zhang (KPZ) universality class, the $\OCY$ polymer, introduced by O'Connell and Yor \cite{OCY01}, is
one of the fundamental exactly solvable models. Moriarty and O'Connell \cite{MO06}
identified the free-energy density of the $\OCY$ polymer. Its
integrable structure was subsequently elucidated through O'Connell's
connection between the polymer partition function, the quantum Toda
lattice, and Whittaker functions \cite{OC12,OC14survey}, as well as
through geometric and tropical RSK correspondences \cite{COSZ14}.
In parallel, Borodin and Corwin \cite{BC14} developed the
Macdonald-process framework, whose $q\to1$ degeneration to Whittaker
processes provides another route to the integrable structure of the
$\OCY$ polymer. These developments led to exact Fredholm determinant representations for transforms of the partition function and the free-energy distribution, both through the Macdonald-process framework, and, from a different determinantal perspective, in the work of Imamura and Sasamoto \cite{IS16,IS17}. Building on the
former representation, Borodin, Corwin, and Ferrari \cite{BCF14}
performed the asymptotic analysis of the Fredholm determinant and proved
the convergence of the rescaled free energy to the GUE Tracy--Widom
distribution. More broadly, at the level of large-scale scaling
limits, the model is connected with the KPZ fixed point
\cite{MQR21} and the directed landscape \cite{DOV22}; see the
works of Dauvergne and Zhang \cite{DZ24} and Vir\'ag and Wu \cite{VW25}.

Alongside these developments in integrable probability, a substantial probabilistic theory of the $\OCY$ polymer has emerged from its stationary structure, which facilitates the use of coupling techniques.  Burke's theorem  in   \cite{OCY01} provides a stationary version of the polymer and has
become a fundamental tool for studying its geometry and fluctuations.
Sepp\"al\"ainen and Valk\'o \cite{SV10} exploited this stationary
structure, together with exit-point estimates, to identify the KPZ
fluctuation and transversal fluctuation exponents. Building on the same
Burke property, Alberts, Rassoul-Agha, and Simper \cite{ARS20}
constructed Busemann functions as almost-sure limits of
partition-function ratios and used them to construct semi-infinite
$\OCY$ polymer measures in prescribed asymptotic directions. For the
fluctuations of the free energy, Janjigian \cite{Jan15} obtained the
large-deviation rate function through moment Lyapunov exponents, while
Noack and Sosoe \cite{NS22} obtained upper bounds for the higher-order central moments. Building on a generating-function identity
originating in the work of Emrah, Janjigian, and Sepp\"al\"ainen
\cite{EJS20}, Landon and Sosoe \cite{LS24} established left and right tail bounds for the free energy
of the correct order throughout the moderate-deviation regime. More
recently, Groathouse, Rassoul-Agha, Sepp\"al\"ainen, and Sorensen
\cite{GRSS25} studied the jointly stationary measure
for the $\OCY$ polymer.

Beyond coupling techniques, percolation arguments provide another
important set of tools for understanding random growth models. These
approaches exploit fine information about optimal paths to study 
properties of the random growth and have been particularly successful in
zero-temperature models. In the zero-temperature setting, the coalescence of geodesics
plays an important role, and various coalescence estimates have been
obtained in \cite{Pimentel16, BSS19,SS20, Zha20}. At positive temperature, analogous geometric information is more difficult to access because the partition function is defined through a collection of paths, rather than being determined by a single optimizing geodesic. Nevertheless,
recent work has begun to develop percolation-type methods for
positive-temperature models. In particular, a coalescence estimate was
obtained for the inverse-gamma polymer in \cite{RSS24}, and a related
BK-type inequality was established for the KPZ equation and the KPZ line ensemble in \cite{GHZ25}.

In this broader context, we establish an identity relating three
natural objects associated with the $\OCY$ polymer: the spatial
derivative of a two-point free-energy correlation, the first meeting level of two independent polymer replicas in the same environment (which is equivalent to coalescence), and
the terminal location of a single polymer.
More precisely, fix $\beta > 0$ and $\alpha \in \mathbb{R}$. Let us define the free energy of the
point-to-line polymer started at $(x,0)$ and terminating on the line
$y=n$ as follows
$$
  F_{x}^{\al, \be}
  =
  \be^{-1}\log Z_{x}^{\al, \be}.
$$
For two distinct starting points $\wt x$ and $x$, we obtain an
exact formula \eqref{main_eq} for the spatial derivative of the two-point free-energy
correlation, 
\[
  \frac{\partial}{\partial x}
  \EE\bigl[F_{\wt x}^{\al,\be}F_x^{\al,\be}\bigr].
\]

In our notation for the free energy $F_{\wt x}^{\al,\be}$, $\beta$ denotes the inverse temperature, while the parameter $\alpha$ controls the strength of the Brownian terminal condition and interpolates between two particularly important regimes. When $\al=0$, the boundary term disappears and the identity
reduces to a formula for the coalescence probability.
When $\al=1$, the coalescence coefficient vanishes and the identity becomes
a formula for the terminal exit point. Differentiating the latter
relation in $\al$ recovers the stationary coalescence probability.
These consequences are recorded in Corollaries~\ref{flat_cor},
\ref{stat_exit}, and \ref{stat_coal}.

In the stationary regime, the covariance identity admits a nonlinear
finite-dimensional extension, whose resulting formula 
leads to Stein's characterization of 
the multivariate normal distribution.
Applied to consecutive spatial
increments, it proves that the anchored horizontal free-energy
profile is a two-sided standard Brownian motion; see
Proposition~\ref{stat_stein} and Corollary~\ref{buse_cor}, giving a
direct Gaussian integration-by-parts proof of the horizontal Burke
property.

Our argument is motivated in part by the Gaussian
integration-by-parts framework that Gu and Quastel developed for the
KPZ equation \cite{GQ26}, whose continuum proof exploits a subtle error estimate arising from an ingenious application of It\^o's formula.
Since this mechanism is not available in the present semi-discrete
setting, we instead mollify the Brownian environment, apply
Malliavin integration by parts directly to the driving white noises,
and exploit intrinsic symmetries within the polymer model
together with the memoryless property.
Lastly, when sending $\beta\to \infty$, we obtain its zero-temperature counterpart for point-to-line
Brownian last-passage percolation, where the polymer measures
concentrate on geodesics.

\subsection{Point-to-line polymers with Brownian terminal data}

\begin{figure}[t]
  \centering
  \pgfmathsetseed{20260406}

  \begin{tikzpicture}[x=1cm,y=.72cm]

    \foreach \yy in {0,1,2,4,5}{
      \draw[brownian background] (0.2,\yy) -- (8.2,\yy);
    }

    \node[left] at (0.2,0) {$0$};
    \node[left] at (0.2,1) {$1$};
    \node[left] at (0.2,2) {$2$};
    \node[left] at (0.2,3) {$\vdots$};
    \node[left] at (0.2,4) {$n-1$};
    \node[left] at (0.2,5) {$n$};

    \node[right] at (8.25,0) {$y=0$};
    \node[right] at (8.25,5) {$y=n$};


    \draw[bluepath]
      (1.0,0)
      -- (2.1,0)
      -- (2.1,1)
      -- (3.25,1)
      -- (3.25,2);

    \draw[bluepath,densely dotted]
      (3.25,2)
      -- (4.15,2)
      -- (4.15,3.05)
      -- (5.25,3.05)
      -- (5.25,4);

    \draw[bluepath]
      (5.25,4)
      -- (6.75,4)
      -- (6.75,5);

    \fill[blue] (1.0,0) circle (1.7pt);
    \fill[blue] (2.1,1) circle (1.7pt);
    \fill[blue] (3.25,2) circle (1.7pt);
    \fill[blue] (4.15,3.05) circle (1.7pt);
    \fill[blue] (5.25,4) circle (1.7pt);
    \fill[blue] (6.75,5) circle (1.7pt);

    \draw[blue,fill=white,line width=.9pt]
      (2.1,0) circle (2.0pt);

    \draw[blue,fill=white,line width=.9pt]
      (3.25,1) circle (2.0pt);

    \draw[blue,fill=white,line width=.9pt]
      (4.15,2) circle (2.0pt);

    \draw[blue,fill=white,line width=.9pt]
      (5.25,3.05) circle (2.0pt);

    \draw[blue,fill=white,line width=.9pt]
      (6.75,4) circle (2.0pt);

    \node[anchor=north east] at (0.95,-0.10)
      {$\ga_0$};

    \node[anchor=south east] at (2.05,1.08)
      {$\ga_1$};

    \node[anchor=south east] at (3.20,2.08)
      {$\ga_2$};

    \node[anchor=south east] at (5.20,4.08)
      {$\ga_{n-1}$};

    \node[anchor=south] at (6.75,5.15)
      {$\ga_n$};

  \end{tikzpicture}

  \captionsetup{width=.84\linewidth}
  \caption{
    The gray curves schematically represent the Brownian environment
    on the horizontal levels.  The blue up-right path represents a
    polymer trajectory with jump locations
    $\ga_1,\dots,\ga_n$.  At the jump location $\ga_i$, the open
    circle is the endpoint of the horizontal segment on level $i-1$,
    while the solid circle is the starting point of the horizontal
    segment on level $i$.
  }
  \label{fig_jump}
\end{figure}

Fix $n\in\Z_{>0}$ and an inverse temperature $\be>0$.  Let
$\{X_i\}_{i\geq1}$ be i.i.d.\ exponential random variables with rate
$\be$, and set
\begin{align}
  \ga_0=0,
  \qquad
  \ga_m=\sum_{i=1}^m X_i,
  \qquad m\geq1.
\end{align}
Thus $(\ga_m)_{m\geq0}$ are the jump times of a rate-$\be$ Poisson
process.  Write
\[
  \DN
  =\{0<\ga_1<\cdots<\ga_n\}.  
\]
We denote by $\mu_\be(d\ga)$ the joint law of
$(\ga_1,\dots,\ga_n)$, namely
\begin{align}\label{mu}
  \mu_\be(d\ga)
  =\be^n e^{-\be\ga_n}
  \ind_{\DN}(\ga)\,
  d\ga_1\cdots d\ga_n.
\end{align}

Let $\{B_i\}_{i\geq0}$ be independent two-sided standard Brownian
motions with $B_i(0)=0$.  For a measurable terminal profile
$h:\R\to\R$, a strength parameter $\al\in\R$, spatial location $x\in \mathbb{R}$, and a vertical level $n \in \mathbb{Z}_{>0}$, define the Hamiltonian as follows
\[
  H_x^{h,\al}(\ga)
  =
  \al h(x+\ga_n)
  +\sum_{i=0}^{n-1}
  \bigl(
    B_i(x+\ga_{i+1})-B_i(x+\ga_i)
  \bigr),
\]
and note that we suppress the vertical level $n$ from our notation, as our results and proofs hold for any fixed $n\geq 1$. 
The corresponding point-to-line partition function is
\begin{align}\label{Z_gen}
  Z_x^{h,\al,\be}
  =
  \be^{-n}\int_{\DN}
    e^{\be H_x^{h,\al}(\ga)}\,
    \mu_\be(d\ga).
\end{align}
And we note the partition function above is almost surely finite when $h$ is a scaled Brownian motion
by Lemma \ref{B_poly}. The associated quenched polymer measure is 
\begin{align*}
  Q_x^{h,\al,\be}(d\ga)
  =
  \frac{\be^{-n}e^{\be H_x^{h,\al}(\ga)}}
       {Z_x^{h,\al,\be}}
  \mu_\be(d\ga).
\end{align*}
Equivalently, $Q_x^{h,\al,\be}$ is a random change of measure of the
Poisson jump-time law.  It may be viewed as a measure on up-right paths
that start from $(x,0)$, make one vertical jump at each of the locations
$\ga_1,\dots,\ga_n$, and arrive at the terminal level $y=n$; see
Figure~\ref{fig_jump}.

For $x\in\R$ and $\ga\in\DN$, we write $x+\ga$ for the
path that starts from $(x,0)$ and passes through
$(x+\ga_i,i)$ for $1\leq i\leq n$.  For two such paths, define their
first meeting level by 
\[
  \tau_{\wt x,x}(\wt\ga,\ga)
  =
  \begin{cases}
    \displaystyle
    \min\bigl\{0\leq k\leq n-1:
      \wt x+\wt\ga_{k+1} >  x+\ga_k\bigr\},
      &\wt x<x,\\[6pt]
    \displaystyle
    \min\bigl\{0\leq k\leq n-1:
      x+\ga_{k+1}> \wt x+\wt\ga_k\bigr\},
      &x<\wt x,\\[6pt]
    0,&x=\wt x,
  \end{cases}
\]
with the convention that the minimum of the empty set is $\infty$.
Thus $\tau=\infty$ precisely when the two paths remain disjoint through
the terminal level.  We abbreviate
$\tau_{\wt x,x}(\wt\ga,\ga)$ to $\tau$ whenever the paths are clear
from the context.  See Figure~\ref{fig_tau}.

\begin{figure}[t]
  \centering
  \pgfmathsetseed{20260407}
  \begin{tikzpicture}[x=.92cm,y=.68cm,>=Stealth]

    \begin{scope}
      \foreach \yy in {0,1,2,3}{
        \draw[brownian background] (0,\yy) -- (5.8,\yy);
      }
      \node[right] at (5.9,0) {$y=0$};
      \node[right] at (5.9,3) {$y=n$};
      \node at (2.9,3.72) {$\tau=\infty$};

      \draw[bluepath]
        (.45,0) -- (1.35,0) -- (1.35,1)
        -- (2.05,1) -- (2.05,2)
        -- (2.75,2) -- (2.75,3);

      \draw[orange,line width=1.1pt]
        (3.25,0) -- (3.85,0) -- (3.85,1)
        -- (4.45,1) -- (4.45,2)
        -- (5.05,2) -- (5.05,3);

      \fill[blue]   (.45,0) circle (1.6pt);
      \fill[blue]   (1.35,1) circle (1.6pt);
      \fill[blue]   (2.05,2) circle (1.6pt);
      \fill[blue]   (2.75,3) circle (1.6pt);

      \fill[orange] (3.25,0) circle (1.6pt);
      \fill[orange] (3.85,1) circle (1.6pt);
      \fill[orange] (4.45,2) circle (1.6pt);
      \fill[orange] (5.05,3) circle (1.6pt);

      \draw[blue,fill=white,line width=.9pt]
        (1.35,0) circle (1.9pt);
      \draw[blue,fill=white,line width=.9pt]
        (2.05,1) circle (1.9pt);
      \draw[blue,fill=white,line width=.9pt]
        (2.75,2) circle (1.9pt);

      \draw[orange,fill=white,line width=.9pt]
        (3.85,0) circle (1.9pt);
      \draw[orange,fill=white,line width=.9pt]
        (4.45,1) circle (1.9pt);
      \draw[orange,fill=white,line width=.9pt]
        (5.05,2) circle (1.9pt);

      \node[below] at (.45,0) {$(\wt x,0)$};
      \node[below] at (3.25,0) {$(x,0)$};
    \end{scope}

    \begin{scope}[xshift=8.0cm]
      \foreach \yy in {0,1,2,3}{
        \draw[brownian background] (0,\yy) -- (5.8,\yy);
      }
      \node[right] at (5.9,0) {$y=0$};
      \node[right] at (5.9,3) {$y=n$};
      \node at (2.9,3.72) {$\tau=1$};

      \draw[bluepath]
        (.45,0) -- (1.55,0) -- (1.55,1)
        -- (4.15,1) -- (4.15,2)
        -- (4.75,2) -- (4.75,3);

      \draw[orange,line width=1.1pt]
        (3.00,0) -- (3.60,0) -- (3.60,1)
        -- (4.80,1) -- (4.80,2)
        -- (5.30,2) -- (5.30,3);

      \fill[blue]   (.45,0) circle (1.6pt);
      \fill[blue]   (1.55,1) circle (1.6pt);
      \fill[blue]   (4.15,2) circle (1.6pt);
      \fill[blue]   (4.75,3) circle (1.6pt);

      \fill[orange] (3.00,0) circle (1.6pt);
      \fill[orange] (4.80,2) circle (1.6pt);
      \fill[orange] (5.30,3) circle (1.6pt);

      \fill[red]    (3.60,1) circle (2.0pt);

      \draw[blue,fill=white,line width=.9pt]
        (1.55,0) circle (1.9pt);
      \draw[blue,fill=white,line width=.9pt]
        (4.15,1) circle (1.9pt);
      \draw[blue,fill=white,line width=.9pt]
        (4.75,2) circle (1.9pt);

      \draw[orange,fill=white,line width=.9pt]
        (3.60,0) circle (1.9pt);
      \draw[orange,fill=white,line width=.9pt]
        (4.80,1) circle (1.9pt);
      \draw[orange,fill=white,line width=.9pt]
        (5.30,2) circle (1.9pt);

      \node[below] at (.45,0) {$(\wt x,0)$};
      \node[below] at (3.00,0) {$(x,0)$};
    \end{scope}

  \end{tikzpicture}
  \captionsetup{width=.8\linewidth}
  \caption{
    An illustration of the first meeting level $\tau$.  The two
    paths in the left panel remain disjoint, whereas those in the
    right panel first meet at the red point on the line $y=1$.
    Open circles mark the endpoints of horizontal segments, and
    solid circles mark the starting points of the next horizontal
    segments after vertical jumps.
  }\label{fig_tau}
\end{figure}

\subsection{Main results}

We now specialize to the Brownian terminal profile $h=B_n$,  and abbreviate
\begin{align*}
  Z_x^{\al,\be}
  =Z_x^{B_n,\al,\be},
  \quad
  Q_x^{\al,\be}
  =Q_x^{B_n,\al,\be},
  \quad
  F_x^{\al,\be}
  =\be^{-1}\log Z_x^{\al,\be}.
\end{align*}
For arbitrary $\be>0$, we use the superscripts
``$\rfree,\be$'' and ``$\rstat,\be$'' for the cases $\al=0,1$,
respectively.

Throughout this paper, $\EE$ denotes expectation with respect to the
Brownian environment, while $Q_x^{\al,\be}$ denotes the quenched
polymer measure in a realization of the environment.  Let us define
$$\rmP_{\wt x,x}^{\al,\be}(\dots) = \mathbb{E}[Q_{\wt x}^{\al,\be}\otimes Q_x^{\al,\be} (\dots)],$$
and in particular,
\begin{align} \label{coal_prob}
  \rmP_{\wt x,x}^{\al,\be}(\tau\leq n-1)
  =
  \EE\left[
    \int_{\DN^2}
    \ind_{\{\tau\leq n-1\}}(\wt x + \wt\ga, x +\ga)
    Q_{\wt x}^{\al,\be}(d\wt\ga)
    Q_x^{\al,\be}(d\ga)
  \right].
\end{align}
This is the annealed probability that two independently sampled polymers starting at $x$ and $\wt x$ meet before reaching the terminal level $y=n$. Equivalently, rather than using the independent coupling, one may use a coupling analogous to the standard coupling of Markov chains, in which the two paths evolve independently until their first meeting and then coalesce; see \cite[Lemma~2.7]{Gin24}.
Lastly, we note that the random variable
$Q_0^{\al,\be}\{\ga_n>-x\}$
is the quenched probability that the terminal point $(\ga_n,n)$ of the polymer, which starts at $(0,0)$, lies to the right of $(-x,n)$.

\medskip

With the above definitions, we are ready to state our main result.

\begin{theorem}\label{main_thm}
  Fix $n\in\Z_{>0}$, $\al\in\R$, $\be>0$, and
  $x,\wt x\in\R$ with $x\neq\wt x$.  Then, 
  \begin{align}\label{main_eq}
    \frac{\partial}{\partial x}
    \EE\bigl[F_{\wt x}^{\al,\be}F_x^{\al,\be}\bigr]
    &=
    \frac{1-\al^2}{2}
    \sgn(\wt x-x)
    \rmP_{\wt x,x}^{\al,\be}(\tau\leq n-1)
    \notag\\
    &\qquad\qquad+
    \al^2\left(
      \EE\bigl[Q_0^{\al,\be}\{\ga_n>-x\}\bigr]
      -\ind_{\{\wt x<x\}}
    \right).
  \end{align}
\end{theorem}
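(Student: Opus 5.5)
Our approach is Gaussian integration by parts (Malliavin calculus) with respect to the driving noises $\dot B_0,\dots,\dot B_n$. We write $B_i(x+\ga_{i+1})-B_i(x+\ga_i)=\int \ind_{[x+\ga_i,x+\ga_{i+1}]}(s)\,dB_i(s)$ and $\al B_n(x+\ga_n)=\al\int \ind_{[0,x+\ga_n]}(s)\,dB_n(s)$, with the sign convention for negative arguments. We first mollify the indicators; integration by parts then applies to smooth functionals, and we remove the mollification at the end using the moment bounds of Lemma~\ref{B_poly}.

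\textbf{Step 1: reduce to a covariance of Malliavin derivatives.} Since $F_{\wt x}$ is a functional of the same Gaussian family,
\[
\EE[F_{\wt x}F_x]=\EE F_{\wt x}\,\EE F_x+\int_0^1\EE\bigl\langle DF_{\wt x},P_{t}DF_x\bigr\rangle\,dt
\]
by the interpolation or Ornstein--Uhlenbeck semigroup covariance formula. A cleaner route, which we use, is to differentiate first. Spatial shift invariance of the Brownian environment makes $\EE F_x$ independent of $x$ when $\al=0$, and makes it linear-free in general. Then $\partial_x F_x$ equals a quenched expectation of boundary terms, namely point evaluations of the white noise at $x+\ga_i$. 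We compute $\EE[F_{\wt x}\,\partial_x F_x]$ by Gaussian integration by parts, moving the noise onto $F_{\wt x}$. This gives
\[
\EE[F_{\wt x}\partial_xF_x]=\sum_i\EE\Bigl[\int\!\!\int Q_{\wt x}(d\wt\ga)Q_x(d\ga)\,\bigl\langle \ind_{\wt x+\wt\ga\text{ on level }i},\ \partial_x\ind_{x+\ga\text{ on level }i}\bigr\rangle\Bigr]
\]
plus a correction. The correction comes from differentiating $Q_x$ itself, and produces a replica term that we must show vanishes or combines.

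\textbf{Step 2: identify the geometric terms.} The pairing $\langle\ind_{[\wt x+\wt\ga_i,\wt x+\wt\ga_{i+1}]},\delta_{x+\ga_{i+1}}-\delta_{x+\ga_i}\rangle$ counts, at each level, whether the endpoints of the $x$-path segment lie inside the $\wt x$-segment. Summing over levels produces a telescoping sum of indicators of relative order of the two paths at each level. For $\wt x<x$ this telescoping collapses to $\ind\{\tau\le n-1\}$ up to a sign, because the order flips exactly at the first meeting. For the terminal level the $\al^2$ term gives $\ind\{0<x+\ga_n\}$ against $\ind\{0<\wt x+\wt\ga_n\}$-type overlaps. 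Combined with shift invariance (re-centering at $0$), these produce $\EE[Q_0\{\ga_n>-x\}]-\ind_{\{\wt x<x\}}$.

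\textbf{Step 3: the main obstacle.} The main obstacle is showing that after the first meeting $\tau$ the two replicas contribute with the right weights, yielding the factor $\tfrac{1-\al^2}{2}$. The half comes from the symmetry of two replicas that have met: exchanging the post-$\tau$ portions of the two paths is measure preserving under $Q_{\wt x}\otimes Q_x$. The $-\al^2$ part comes from the terminal Brownian boundary, which after meeting contributes like a stationary (Burke) increment. We would establish this swap symmetry path by path. The memoryless property of the exponential jump law $\mu_\be$ lets us restart both paths at the meeting point, so the joint law after $\tau$ is exchangeable and the overlap beyond $\tau$ integrates to half of the diagonal term. The diagonal term requires care because $\langle\delta,\delta\rangle$ is singular, which is exactly where the mollification is needed. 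We expect the computation of its limit to be the most delicate part of the proof.
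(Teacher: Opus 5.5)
Your architecture is the right one: mollify, differentiate in $x$, apply Gaussian integration by parts to move the noise onto $F_{\wt x}$, then swap at the first meeting using memorylessness. But Step~2 asserts something false, and the bookkeeping that produces $\tau$ is missing.

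\textbf{The bulk term.} In the $\eps\downarrow0$ limit, the level-$i$ pairing $\langle\ind_{[\wt x+\wt\ga_i,\wt x+\wt\ga_{i+1}]},\delta_{x+\ga_{i+1}}-\delta_{x+\ga_i}\rangle$ equals $\tfrac12\mathfrak D_i^{\wt x,x}(\wt\ga,\ga)$. The sum $\sum_i\mathfrak D_i^{\wt x,x}$ is \emph{not} a pathwise function of $\tau$, and the order does not flip exactly at the first meeting. Take $n=1$. If $\wt x<x<x+\ga_1<\wt x+\wt\ga_1$ (meeting and crossing), then $\mathfrak D_0^{\wt x,x}=0$ although $\tau=0$. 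If $\wt x<x<\wt x+\wt\ga_1<x+\ga_1$ (meeting without crossing), then $\mathfrak D_0^{\wt x,x}=-2$. So no telescoping collapse exists. The $\tfrac12$ in the theorem is exactly the exchangeable average of the two values $0$ and $-1$ of $\tfrac12\mathfrak D_k^{\wt x,x}$ at the first meeting level.

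What you need instead is the averaged identity $\int\sum_i\mathfrak D_i^{\wt x,x}\,d(Q_{\wt x}\otimes Q_x)=\sgn(\wt x-x)(Q_{\wt x}\otimes Q_x)\{\tau\le n-1\}$. For $\wt x<x$ it goes as follows.
\begin{enumerate}
\item On $\{\tau=k\}$ one has $\mathfrak D_i^{\wt x,x}=0$ for $i<k$, and $\mathfrak D_k^{\wt x,x}=\sgn\bigl(P_{k+1}(\wt x,\wt\ga)-P_{k+1}(x,\ga)\bigr)-1$.
\item Let $s_k=P_k(x,\ga)-P_k(\wt x,\wt\ga)$. By memorylessness, the overshoot $\wt\ga_{k+1}-\wt\ga_k-s_k$ of the $\wt x$-path has the same law as the $x$-path's next increment. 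Both continuations from $q=P_k(x,\ga)$ are weighted by the same environment.
\item Hence exchanging the two continuations from $q$ is a $Q_{\wt x}\otimes Q_x$-preserving involution of $\{\tau=k\}$.
\item This involution flips the sign in $\mathfrak D_k^{\wt x,x}$ and every $\mathfrak D_i^{\wt x,x}$ with $i>k$, so only the $-1$ survives.
\end{enumerate}
Your Step~3 swap points at this. However, it must replace Step~2, not supplement it, and ``the overlap beyond $\tau$ integrates to half of the diagonal term'' is not the statement required.

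\textbf{The boundary term.} The $-\al^2$ in $\tfrac{1-\al^2}{2}$ has nothing to do with a Burke property, which is unavailable for $\al\neq1$. The terminal pairing is $\tfrac{\al^2}{2}\bigl[\sgn\bigl(P_n(\wt x,\wt\ga)-P_n(x,\ga)\bigr)+\sgn\bigl(P_n(x,\ga)\bigr)\bigr]$.
\begin{itemize}
\item The second sign gives $2\EE[Q_0^{\al,\be}\{\ga_n>-x\}]-1$ by shift invariance.
\item The first sign, by the same involution, integrates to zero on each $\{\tau=k\}$ and equals $\sgn(\wt x-x)$ on $\{\tau=\infty\}$.
\end{itemize}
Writing $\rmP_{\wt x,x}^{\al,\be}(\tau=\infty)=1-\rmP_{\wt x,x}^{\al,\be}(\tau\le n-1)$ then produces both the $-\tfrac{\al^2}{2}$ part of the coalescence coefficient and, together with the constant $-\tfrac{\al^2}{2}$, the term $-\al^2\ind_{\{\wt x<x\}}$. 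Your account of the boundary term skips exactly this step.

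\textbf{The correction term.} The Gibbs-density correction you leave open vanishes pathwise. Write $F_\ga(y)=D_{i,y}H_x^\eps(\ga)$; for $i=n$, use $\al\Phi^\eps(x+\ga_n-y)$ instead, since the anchor $\al\Phi^\eps(-y)$ cancels because $Q_x$ is a probability measure. The noise kernel is then $-\partial_yF_\ga$. The correction becomes $-\int F_\ga\partial_yF_\ga\,dy$ plus a symmetrized $\tfrac12\int\partial_y(F_\ga F_{\wt\ga})\,dy$, integrated against $Q_x$ and $Q_x\otimes Q_x$ respectively. Both vanish because the boundary values at $\pm\infty$ agree.

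There is no singular $\langle\delta,\delta\rangle$ term at first order. The genuinely delicate limit is showing that the mollified Gibbs measures put vanishing mass near the discontinuities of the sign kernels, uniformly for $x$ in compacts away from $\wt x$.
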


\begin{remark}
  The restriction $x\neq\wt x$ is essential.  For some values of
  $\al$, the map
  $
    x\mapsto
    \EE[F_{\wt x}^{\al,\be}F_x^{\al,\be}]
  $
  is not differentiable at $x=\wt x$; see
  Remark~\ref{const_der} for a concrete example.
\end{remark}

When $\al=0$, the boundary contribution vanishes and the theorem gives
a direct coalescence identity.

\begin{corollary}[\textsf{Zero terminal condition}]\label{flat_cor}
  For $n\geq1$, $\be>0$, and $x\neq\wt x$,
  \begin{align}
    \rmP_{\wt x,x}^{\rfree,\be}(\tau\leq n-1)
    &=
    \frac{2}{\be^2}\sgn(\wt x-x)
    \frac{\partial}{\partial x}
    \EE\bigl[
      \log Z_{\wt x}^{\rfree,\be}\log Z_x^{\rfree,\be}
    \bigr]
    \label{flat_eq}
    \\
    &=
    \frac{2}{\be^2}\sgn(\wt x-x)
    \frac{\partial}{\partial x}
    \Cov\bigl(
      \log Z_{\wt x}^{\rfree,\be},
      \log Z_x^{\rfree,\be}
    \bigr)
    \notag\\
    &=
    -\frac{1}{\be^2}\sgn(\wt x-x)
    \frac{\partial}{\partial x}
    \EE\left[
      \bigl(
        \log Z_{\wt x}^{\rfree,\be}
        -\log Z_x^{\rfree,\be}
      \bigr)^2
    \right].
    \notag
  \end{align}
\end{corollary}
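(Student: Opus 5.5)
We derive Corollary~\ref{flat_cor} by specializing Theorem~\ref{main_thm} to $\al=0$. With $\al=0$ the boundary term carries the factor $\al^2=0$ and disappears, and the coalescence coefficient $(1-\al^2)/2$ becomes $1/2$. Since $F_x^{\rfree,\be}=\be^{-1}\log Z_x^{\rfree,\be}$, identity \eqref{main_eq} reads
\[
\be^{-2}\frac{\partial}{\partial x}\EE\bigl[\log Z_{\wt x}^{\rfree,\be}\log Z_x^{\rfree,\be}\bigr]=\tfrac12\sgn(\wt x-x)\,\rmP_{\wt x,x}^{\rfree,\be}(\tau\le n-1).
\]
Because $x\neq\wt x$, we have $\sgn(\wt x-x)^2=1$. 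Multiplying both sides by $2\sgn(\wt x-x)$ gives the first equality in \eqref{flat_eq}.

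For the covariance form, I would use shift invariance of the environment. When $\al=0$ the Hamiltonian involves only the increments $B_i(x+\cdot)-B_i(x)$. By stationarity of the increments of two-sided Brownian motion, the law of $Z_x^{\rfree,\be}$ therefore does not depend on $x$. Hence $\EE[\log Z_x^{\rfree,\be}]$ is a constant $c$, finite by Lemma~\ref{B_poly} or the standard moment bounds assumed earlier. It follows that $\Cov=\EE[\log Z_{\wt x}\log Z_x]-c^2$, and the two $x$-derivatives coincide.

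For the last form, expand
\[
\EE\bigl[(\log Z_{\wt x}-\log Z_x)^2\bigr]=\EE\bigl[(\log Z_{\wt x})^2\bigr]+\EE\bigl[(\log Z_x)^2\bigr]-2\,\EE\bigl[\log Z_{\wt x}\log Z_x\bigr].
\]
By the same shift invariance, the first two terms are constant in $x$, so differentiating in $x$ gives $-2\,\partial_x\EE[\log Z_{\wt x}\log Z_x]$. Substituting this into the first equality yields the factor $-1/\be^2$.

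The only step that needs any care is the integrability needed for these manipulations: finiteness of the second moments of $\log Z_x^{\rfree,\be}$, and the existence of the derivative. Existence of the derivative is supplied by Theorem~\ref{main_thm}, and the second-moment bound by the paper's standing estimates. The rest is direct substitution.
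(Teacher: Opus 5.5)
Your proposal is correct and follows the paper's proof essentially verbatim. You specialize Theorem~\ref{main_thm} to $\al=0$, use spatial stationarity (the free Hamiltonian involves only increments of the $B_i$) to see that $\EE[\log Z_x^{\rfree,\be}]$ and $\EE[(\log Z_x^{\rfree,\be})^2]$ do not depend on $x$, and then expand the squared difference. One minor correction: the finiteness of second moments comes directly from \eqref{LP_logZRL} in Lemma~\ref{lem_MD1}, not from Lemma~\ref{B_poly} alone, although the former is derived from the latter.
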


\begin{remark}
  The three formulas are equivalent because the free-terminal model is
  translation invariant in $x$.  In particular,
  $\EE[\log Z_x^{\rfree, \be}]$ and
  $\EE[(\log Z_x^{\rfree, \be})^2]$ do not depend on $x$.
\end{remark}

When $\al=1$, the coalescence coefficient vanishes and the theorem
reduces to an identity for the location of the terminal polymer point.

\begin{corollary}[\textsf{Stationary terminal condition}]\label{stat_exit}
  For $n\geq1$, $\be>0$, and $x\neq\wt x$,
  \begin{align}\label{stat_exit_eq}
    \frac{\partial}{\partial x}
    \EE\bigl[
      F_{\wt x}^{\rstat,\be}F_x^{\rstat,\be}
    \bigr]
    =
    \EE\bigl[Q_0^{\rstat,\be}\{\ga_n>-x\}\bigr]
    -\ind_{\{\wt x<x\}}.
  \end{align}
\end{corollary}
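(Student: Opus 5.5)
This is a direct specialization of Theorem~\ref{main_thm} to $\al=1$. We substitute $\al=1$ into \eqref{main_eq} and use the abbreviations $F_x^{\rstat,\be}=F_x^{1,\be}$ and $Q_0^{\rstat,\be}=Q_0^{1,\be}$.

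The coalescence coefficient $\frac{1-\al^2}{2}$ vanishes at $\al=1$. Hence the first term $\frac{1-\al^2}{2}\sgn(\wt x-x)\rmP_{\wt x,x}^{1,\be}(\tau\leq n-1)$ drops out. We only need that this probability is finite, which is trivial since it lies in $[0,1]$. The boundary coefficient satisfies $\al^2=1$, so the second term becomes exactly
\[
\EE\bigl[Q_0^{\rstat,\be}\{\ga_n>-x\}\bigr]-\ind_{\{\wt x<x\}}.
\]
This is the right-hand side of \eqref{stat_exit_eq}.

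The hypotheses of Theorem~\ref{main_thm} ($n\ge1$, $\be>0$, $x\neq\wt x$, with $\al=1\in\R$) coincide with those of the corollary, so no further justification is required. The partition functions are almost surely finite by Lemma~\ref{B_poly}, and the theorem already includes the existence of the derivative.

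There is no real obstacle here. All the analytic content, namely the Malliavin integration by parts and the handling of the terminal Brownian term, sits in the proof of Theorem~\ref{main_thm}.
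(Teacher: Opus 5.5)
Your proposal is correct and matches the paper's own proof. The paper likewise sets $\al=1$ in Theorem~\ref{main_thm}: the coalescence coefficient $\frac{1-\al^2}{2}$ vanishes, and the remaining boundary term with $\al^2=1$ is exactly \eqref{stat_exit_eq}.
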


\begin{remark}\label{const_der}
  If $x>0$, then $\ga_n>-x$ holds identically, and therefore
$    \EE\bigl[Q_0^{\rstat,\be}\{\ga_n>-x\}\bigr]=1.$  Corollary~\ref{stat_exit} consequently gives
  \[
    \frac{\partial}{\partial x}
    \EE\bigl[
      F_{\wt x}^{\rstat,\be}F_x^{\rstat,\be}
    \bigr]
    =
    \begin{cases}
      1,&0<x<\wt x,\\[3pt]
      0,&x>\wt x\vee0.
    \end{cases}
  \]
  Thus, when $\wt x>0$, the derivative is piecewise constant and has a
  jump at $x=\wt x$.
\end{remark}

\begin{remark} 
  The classical Burke property for the stationary $\OCY$ polymer \cite{OCY01} provides a check of the vanishing derivative in Remark~\ref{const_der}. We assume the standard normalization $\be=1$. 

Fix $\wt x\in\R$ and $x>\wt x\vee0$. By the Burke property along down-right paths, the horizontal free-energy increment $\log Z_{x+\delta}^{\rstat} - \log Z_x^{\rstat}$ (for $\delta > 0$) is distributed as a centered Brownian increment. Furthermore, it is independent of the spatial history to its left and above, which includes $\log Z_{\wt x}^{\rstat}$. Consequently, taking the expectation yields
\begin{equation}\label{burke_cov_inc}
  \EE\left[
    \log Z_{\wt x}^{\rstat}
    \bigl(
      \log Z_{x+\delta}^{\rstat}
      -\log Z_x^{\rstat}
    \bigr)
  \right] = \EE\left[
    \log Z_{\wt x}^{\rstat}\right]
    \EE\left[
      \log Z_{x+\delta}^{\rstat}
      -\log Z_x^{\rstat}
  \right]
  = 0.
\end{equation}
This immediately implies that $x \mapsto \EE\left[ \log Z_{\wt x}^{\rstat}\log Z_x^{\rstat} \right]$ is constant on $(\wt x\vee0,\infty)$, recovering the zero derivative from Corollary~\ref{stat_exit}.
\end{remark}

The exit time identity in Corollary \ref{stat_exit} above admits the following nonlinear extension.
It is a  Stein identity for multivariate normal 
and leads directly to
the Brownian law of the horizontal stationary free-energy profile.

\begin{proposition}[\textsf{Stationary Stein identity}]
  \label{stat_stein}
  Fix $n\geq1$, $\be>0$, and $m\geq0$.  Let
  $\mathbf y=(y_0,\ldots,y_m)\in\R^{m+1}$ and set
  \[
    \mathbf F_{\mathbf y}^{\be}
    =
    \bigl(
      F_{y_0}^{\rstat,\be},\ldots,
      F_{y_m}^{\rstat,\be}
    \bigr).
  \]
  Suppose that $\varphi\in C^1(\R^{m+1})$ and
  $\|\nabla\varphi\|_\infty<\infty$.  Then the map
  $
    x\mapsto
    \EE\bigl[
      \varphi(\mathbf F_{\mathbf y}^{\be})
      F_x^{\rstat,\be}
    \bigr]
  $
  is continuously differentiable on every connected component of
  $\R\setminus\{y_0,\ldots,y_m\}$.  Moreover, for every
  $x\notin\{y_0,\ldots,y_m\}$,
  \begin{align}\label{stat_stein_eq}
    &\frac{\partial}{\partial x}
    \EE\bigl[
      \varphi(\mathbf F_{\mathbf y}^{\be})
      F_x^{\rstat,\be}
    \bigr]
  =
    \sum_{j=0}^{m}
    \EE\left[
      \partial_j\varphi(\mathbf F_{\mathbf y}^{\be})
      \left(
        Q_x^{\rstat,\be}\{\ga_n>-x\}
        -\ind_{\{x>y_j\}}
      \right)
    \right].
  \end{align}
\end{proposition}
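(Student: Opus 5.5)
The plan is to repeat the Gaussian integration-by-parts argument behind Theorem~\ref{main_thm} with $\al=1$, but with the linear test function $F_{\wt x}^{\rstat,\be}$ replaced by the general weight $\varphi(\mathbf F_{\mathbf y}^{\be})$. First I mollify the environment, replacing each $B_i$ by $B_i^\eps=B_i*\rho_\eps$. Then $x\mapsto F_x^{\rstat,\be,\eps}$ is a.s.\ $C^1$, and
\[
\partial_x F_x^{\rstat,\be,\eps}
= Q_x^{\rstat,\be,\eps}\Bigl[\,\dot B_n^\eps(x+\ga_n)
+\sum_{i=0}^{n-1}\bigl(\dot B_i^\eps(x+\ga_{i+1})-\dot B_i^\eps(x+\ga_i)\bigr)\Bigr].
\]
Lemma~\ref{B_poly} and $\|\nabla\varphi\|_\infty<\infty$ give moment bounds, so I can differentiate under $\EE$ and obtain
\[
\partial_x\EE[\varphi(\mathbf F^\eps_{\mathbf y})F^\eps_x]
=\EE\bigl[\varphi(\mathbf F^\eps_{\mathbf y})\,\partial_xF^\eps_x\bigr].
\]
Each $\dot B_i^\eps(s)$ is a Wiener integral $\int\rho_\eps(s-u)\,W_i(du)$. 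Applying Malliavin integration by parts (the Gaussian identity $\EE[G\,W(h)]=\EE[\langle DG,h\rangle]$) to the random variable $G=\varphi(\mathbf F^\eps_{\mathbf y})\,Q_x^\eps[\cdot]$ produces two groups of terms.

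The first group is where $D$ falls on $\varphi$. It gives
\[
\sum_j\EE\bigl[\partial_j\varphi(\mathbf F^\eps_{\mathbf y})\;Q^\eps_{y_j}\otimes Q^\eps_x\bigl[K^\eps_j(\wt\ga,\ga)\bigr]\bigr],
\]
where $K^\eps_j$ is the mollified overlap kernel. It pairs the level-$i$ segment of the path from $y_j$ with the signed endpoints of the level-$i$ segment of the path from $x$. The second group is where $D$ falls on the quenched measure $Q_x^\eps$ itself. It gives $\EE[\varphi\cdot(\text{quenched covariance terms})]$, which involves $\be$ times a replica overlap of $Q_x^\eps$ with itself.

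As $\eps\downarrow0$, each overlap kernel $\int \rho_\eps(a-u)\ind_{[\wt s,\wt t]}(u)\,du$ converges to an indicator of the form $\ind\{\wt s< a<\wt t\}$. The first group then becomes a count, level by level, of where the endpoints of $x+\ga$ sit relative to the segments of $y_j+\wt\ga$. The terminal term $\dot B_n(x+\ga_n)$ pairs with the terminal increment $B_n(y_j+\wt\ga_n)$, producing an indicator comparing $x+\ga_n$ and $y_j+\wt\ga_n$. In the proof of Theorem~\ref{main_thm}, the interior contributions telescope into $\pm$ the meeting event $\{\tau\le n-1\}$, with coefficient $\tfrac12$ from each Brownian level. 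The terminal term supplies the complementary $\tfrac{\al^2}{2}$, so at $\al=1$ the meeting terms cancel, leaving only the boundary term $\ind\{\ga_n>-x\}-\ind\{x>y_j\}$. I would check that this telescoping and cancellation are \emph{pathwise} identities in $(\wt\ga,\ga)$. If so, they survive the random weight $\partial_j\varphi(\mathbf F_{\mathbf y})$, and the $\wt\ga$-integral collapses, giving exactly the summand in \eqref{stat_stein_eq}.

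The main obstacle is the second group, together with any part of the cancellation that Theorem~\ref{main_thm} achieved only \emph{in expectation}, for instance via translation invariance or the memoryless property of $\mu_\be$. I would first show that the self-interaction term vanishes quenched-wise at $\al=1$. Its integrand is an antisymmetric expression in two replicas $\ga,\ga'\sim Q_x$, so its $Q_x\otimes Q_x$-integral should be zero by exchanging $\ga\leftrightarrow\ga'$. Alternatively, I would rewrite the integrand as an exact $x$-derivative of a stationary functional and use the Burke structure. If some step of the linear proof really requires averaging, I would instead condition on $\mathbf F_{\mathbf y}$, or use the independence from the Burke remark, to absorb $\partial_j\varphi$.

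Finally, continuity in $x$ on each component of $\R\setminus\{y_j\}$ follows by dominated convergence. The indicators $\ind\{x>y_j\}$ are locally constant there, and $x\mapsto Q_x\{\ga_n>-x\}$ is continuous because the law of $\ga_n$ under $Q_x$ has no atoms. The same bounds justify passing $\eps\to0$ in the derivative, via uniform convergence of the right-hand sides on compacts.
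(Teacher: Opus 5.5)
\textbf{There is a genuine gap, in the step that carries the whole proposition.}

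Your architecture matches the paper's: mollify, integrate by parts in the Malliavin sense with the chain rule on $\varphi$, cancel the self-interaction of $Q_x^{\rstat,\be,\eps}$ by replica antisymmetry, and pass to sign kernels. Your treatment of the second group is fine. That cancellation holds for every environment and every $\al$, because it reduces to integrals of $y$-derivatives of products of two kernels with equal limits at $y=\pm\infty$ (Lemmas~\ref{cancel_b} and~\ref{cancel_bd}).

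The gap is in showing that, with the random weight $\partial_j\varphi(\mathbf F_{\mathbf y}^{\be})$ present, the limiting kernel
\[
\sum_{i=0}^{n-1}\mathfrak D_i^{y_j,x}(\eta,\ga)+\sgn\bigl(P_n(y_j,\eta)-P_n(x,\ga)\bigr)
\]
integrates to $\sgn(y_j-x)$. You propose to check that this is a pathwise identity in $(\eta,\ga)$, but it is not. Take $n=1$ and $y_j<x$. On the meeting event $\{y_j+\eta_1>x\}$ the expression equals $+1$ when $y_j+\eta_1>x+\ga_1$, and $-3$ when $y_j+\eta_1<x+\ga_1$. It averages to $-1$ only because these two configurations carry equal Gibbs mass. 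Likewise, the reduction to the meeting event in Theorem~\ref{main_thm} is not a telescoping.

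Your fallbacks do not repair this. Conditioning on $\mathbf F_{\mathbf y}^{\be}$ would require the conditional expectation of the kernel integral given $\mathbf F_{\mathbf y}^{\be}$ to equal $\sgn(y_j-x)$. An identity valid only in full expectation cannot give that. Burke-type independence concerns free-energy increments, not this two-replica functional. Using it would also make the paper's derivation of Corollary~\ref{buse_cor} from this proposition circular.

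\textbf{The missing idea is that the identity is quenched.} For almost every environment, the kernel above integrates to $\sgn(y_j-x)$ against $Q_{y_j}^{\rstat,\be}\otimes Q_x^{\rstat,\be}$ (Lemma~\ref{quen_lem}, \eqref{quen_c}). Identities that hold environment by environment survive multiplication by any weight measurable in the environment.

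To prove it, take $y<x$ and decompose by the first meeting level $k$. On $\fB_k$:
\begin{itemize}
\item $\mathfrak D_i=0$ for $i<k$;
\item $\mathfrak D_k=\sgn\bigl(P_{k+1}(y,\eta)-P_{k+1}(x,\ga)\bigr)-1$;
\item the meeting condition is $\{X_{k+1}>s_k\}$, where $s_k=x+\ga_k-y-\eta_k$ is $\cF_k$-measurable.
\end{itemize}
The memoryless property of $X_{k+1}$ restarts both continuations from the common point $q=x+\ga_k$ on level $k$, in the same fixed environment. Hence the product Gibbs weight of the two continuations is symmetric under exchanging them.

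Meanwhile the sign part of $\mathfrak D_k$, every $\mathfrak D_i$ with $i>k$, and the terminal sign are all antisymmetric under that exchange, so they integrate to zero. Only the constant $-1$ survives. The bulk then gives $-(Q_y\otimes Q_x)\{\tau\le n-1\}$ and the terminal sign gives $-(Q_y\otimes Q_x)\{\tau=\infty\}$, which sum to $\sgn(y-x)$. The case $y>x$ follows by interchanging the two paths.

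Without this argument, quoting Theorem~\ref{main_thm} only recovers the case of affine $\varphi$, not the Stein identity.
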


\begin{remark} \label{rem19}
  The quenched terminal exit probability in
  \eqref{stat_stein_eq} is based at the variable point $x$.
  Inside the weighted expectation, it cannot in general be replaced by
  $Q_0^{\rstat,\be}\{\ga_n>-x\}$.  When $m=0$ and
  $\varphi(r)=r$, however, spatial stationarity gives
  \[
    \EE\bigl[Q_x^{\rstat,\be}\{x+ \ga_n> 0\}\bigr]
    =
    \EE\bigl[Q_0^{\rstat,\be}\{\ga_n>-x\}\bigr],
  \]
  where the equality follows by shifting the environment and using
  stationarity of its relative increment fields;
  see the beginning of Appendix \ref{moll_app}
  for more details.
  Consequently,
  Proposition~\ref{stat_stein} reduces to
  Corollary~\ref{stat_exit}.

\end{remark}

Corollary~\ref{buse_cor} below gives a direct Gaussian
integration-by-parts proof of the Brownian law of the horizontal
stationary profile. 
\begin{corollary}[\textsf{Brownian Busemann process}]
  \label{buse_cor}
  Fix $n\geq1$ and $\be>0$, and define the anchored horizontal
  stationary free-energy profile by
  \begin{align}\label{buse_eq}
    \cB^{\be}(x)
    =
    F_x^{\rstat,\be}-F_0^{\rstat,\be},
    \qquad x\in\R.
  \end{align}
  Then $(\cB^{\be}(x))_{x\in\R}$ is a two-sided standard Brownian
  motion.  More precisely, for every $m\geq1$ and every
  $x_0<x_1<\cdots<x_m$, the increments
   $\widehat\Delta_k
    =
    \cB^{\be}(x_k)-\cB^{\be}(x_{k-1})$,
  $1\leq k\leq m$,
  are independent with
    $\widehat\Delta_k\sim \text{Normal}(0,x_k-x_{k-1}).$
  Equivalently,
   $ x\in\R\mapsto
    \log Z_x^{\rstat,\be}-\log Z_0^{\rstat,\be}$
  is a two-sided Brownian motion with variance parameter $\be^2$.
\end{corollary}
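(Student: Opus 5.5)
The plan is to derive Corollary~\ref{buse_cor} from Proposition~\ref{stat_stein} by identifying the joint characteristic function of the increments. Fix $x_0<x_1<\cdots<x_m$. Since the multivariate normal law is determined by the Stein characterization, we only need to verify that for every bounded $C^1$ test function $\psi$ of the increment vector $\widehat\Delta=(\widehat\Delta_1,\dots,\widehat\Delta_m)$ with bounded gradient,
\[
  \EE\bigl[\widehat\Delta_k\,\psi(\widehat\Delta)\bigr]
  =(x_k-x_{k-1})\,\EE\bigl[\partial_k\psi(\widehat\Delta)\bigr],
  \qquad 1\le k\le m.
\]
Because the covariance matrix $\mathrm{diag}(x_k-x_{k-1})$ is nondegenerate, this system characterizes $\mathrm{Normal}(0,\mathrm{diag}(x_k-x_{k-1}))$. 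A standard route is to take $\psi=e^{i\langle t,\cdot\rangle}$ (real and imaginary parts separately). This gives the ODE $\partial_{t_k}\phi(t)=-(x_k-x_{k-1})t_k\phi(t)$ for the characteristic function $\phi$, with $\phi(0)=1$, so $\phi$ is the Gaussian characteristic function. Integrability of $\widehat\Delta_k$ follows from the moment bounds implicit in Lemma~\ref{B_poly}. The statement for $x\mapsto\cB^\be(x)$ as a process then follows: we have finite-dimensional distributions of a two-sided Brownian motion, anchoring $\cB^\be(0)=0$ by including $0$ among the $x_k$, and a continuous modification (indeed $x\mapsto F_x$ is continuous).

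To get the Stein identity, I apply Proposition~\ref{stat_stein} with $\mathbf y=(x_0,\dots,x_m)$ and $\varphi(\mathbf F)=\psi(F_{x_1}-F_{x_0},\dots,F_{x_m}-F_{x_{m-1}})$. Then $\partial_j\varphi=\partial_j\psi-\partial_{j+1}\psi$ (with $\partial_0\psi=\partial_{m+1}\psi:=0$), and in particular $\sum_j\partial_j\varphi=0$. Substituting into \eqref{stat_stein_eq}, the quenched exit term $Q_x^{\rstat,\be}\{\ga_n>-x\}$ multiplies $\sum_j\partial_j\varphi=0$ and drops out. This cancellation is precisely why we work with increments, and it is how we sidestep Remark~\ref{rem19}. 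What remains is the purely deterministic expression
\[
  \frac{\partial}{\partial x}\EE\bigl[\varphi F_x^{\rstat,\be}\bigr]
  =-\sum_{j:\,y_j<x}\EE[\partial_j\varphi]
  =\EE\bigl[\partial_{J(x)+1}\psi(\widehat\Delta)\bigr],
\]
where $J(x)=\max\{j:x_j<x\}$. Telescoping gives $-\sum_{j\le J}(\partial_j\psi-\partial_{j+1}\psi)=\partial_{J+1}\psi$, which vanishes if $J=m$ and equals $0$ if $x<x_0$.

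Next I integrate in $x$ from $x_{k-1}$ to $x_k$. On $(x_{k-1},x_k)$ the derivative is the constant $\EE[\partial_k\psi]$. By the $C^1$ statement on components together with continuity of $x\mapsto\EE[\varphi F_x]$ up to the endpoints, we get
\[
  \EE\bigl[\psi(\widehat\Delta)(F_{x_k}-F_{x_{k-1}})\bigr]=(x_k-x_{k-1})\EE[\partial_k\psi],
\]
which is the desired Stein identity.

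The main obstacle is the continuity of $x\mapsto\EE[\varphi(\mathbf F)F_x^{\rstat,\be}]$ at the points $y_j$, since the proposition only gives differentiability on open components. I would prove this via $L^2$-continuity of $x\mapsto F_x^{\rstat,\be}$. This follows from continuity of the Brownian environment in $x$, using the bound $|F_{x'}-F_x|\le$ sup-oscillations of the $B_i$ on $[x,x']$ (shift the path and compare Hamiltonians), combined with Gaussian moment bounds and boundedness of $\varphi$. Then the fundamental theorem of calculus applies on each closed interval $[x_{k-1},x_k]$.
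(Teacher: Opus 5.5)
Your argument is essentially the paper's proof. It uses the same increment test function $\widehat\varphi$, the same pointwise cancellation of the quenched exit term via $\sum_j\partial_j\widehat\varphi=0$, and the same telescoping of the indicator terms to $\EE[\partial_k\psi(\widehat\Delta)]$ on $(x_{k-1},x_k)$. It then integrates between consecutive points, applies Stein's characterization (your characteristic-function ODE is just the standard proof of the Stein lemma the paper cites), and finishes with continuity of $x\mapsto F_x^{\rstat,\be}$.

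One correction concerns the step you single out as the main obstacle. The bound $|F_{x'}-F_x|\le$ (oscillations of the $B_i$ on $[x,x']$) is false at positive temperature. Take $n=1$ and $\al=\be=1$, and write $W=B_0+B_1$. Then $\log Z_x=x-B_0(x)+\log\int_x^\infty e^{W(t)-t}\,dt$. For $x<x'$, the difference $\log Z_{x'}-\log Z_x$ therefore contains the term $-\log\bigl(1+\int_x^{x'}e^{W(t)-t}\,dt\big/\int_{x'}^\infty e^{W(t)-t}\,dt\bigr)$. After factoring out $e^{W(x')-x'}$, the denominator depends only on the increments of $W$ to the right of $x'$. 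Those increments are independent of the environment on $[x,x']$ and make the denominator arbitrarily small with positive probability. Shifting the path, i.e.\ comparing $H_{x'}(\ga)$ with $H_x(\ga)$ for the same $\ga$, does not help either: it produces increments over the windows $[x+\ga_j,x'+\ga_j]$ all along the path, and their supremum over $\ga$ is infinite.

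The endpoint continuity you need is nonetheless routine, and the paper gets it as follows. The map $u\mapsto\cZ_u^{\rstat,\be}$ is almost surely continuous and positive by dominated convergence (Lemma~\ref{conv_Z}). The compact-uniform moment bound \eqref{LP_logZRL} makes $|F_u^{\rstat,\be}|^2$ uniformly integrable for $u$ in compacts. Hence $u\mapsto F_u^{\rstat,\be}$ is $L^2$-continuous, which justifies the limits at $x_{k-1}$ and $x_k$.
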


The stationary coalescence probability can be recovered by
differentiating the main identity with respect to $\al$ at $\al=1$.

\begin{corollary}[\textsf{Stationary coalescence identity}]\label{stat_coal}
  Fix $n\geq1$, $\be>0$, and $0\leq\wt x<x$.  Then
  \begin{align*}
    \rmP_{\wt x,x}^{\rstat,\be}(\tau\leq n-1)
    &=
    \left.
      \frac{1}{\be^2}
      \frac{\partial}{\partial\al}
      \frac{\partial}{\partial x}
      \EE\bigl[
        \log Z_{\wt x}^{\al,\be}\log Z_x^{\al,\be}
      \bigr]
    \right|_{\al=1}
    \\
    &=
    2-
    \left.
      \frac{1}{\be^2}
      \frac{\partial}{\partial\al}
      \frac{\partial}{\partial\wt x}
      \EE\bigl[
        \log Z_{\wt x}^{\al,\be}\log Z_x^{\al,\be}
      \bigr]
    \right|_{\al=1}.
  \end{align*}
\end{corollary}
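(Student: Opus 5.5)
We prove Corollary~\ref{stat_coal} by differentiating the identity \eqref{main_eq} of Theorem~\ref{main_thm} in $\al$ at $\al=1$. Multiplying \eqref{main_eq} by $\be^2$ turns $F$ into $\log Z$. In the regime $0\le\wt x<x$ we have $\sgn(\wt x-x)=-1$ and $\ind_{\{\wt x<x\}}=1$, so
\[
\frac{1}{\be^2}\frac{\partial}{\partial x}\EE\bigl[\log Z_{\wt x}^{\al,\be}\log Z_x^{\al,\be}\bigr]
=-\frac{1-\al^2}{2}\,\rmP_{\wt x,x}^{\al,\be}(\tau\le n-1)+\al^2\Bigl(\EE\bigl[Q_0^{\al,\be}\{\ga_n>-x\}\bigr]-1\Bigr).
\]
Since $x>0$ and $\ga_n>0$, the event $\{\ga_n>-x\}$ is certain. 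The second term is therefore identically zero for every $\al$, not just at $\al=1$.

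Differentiating in $\al$ then leaves only the first term. Provided $\al\mapsto\rmP^{\al,\be}_{\wt x,x}(\tau\le n-1)$ is differentiable near $\al=1$ (or merely bounded and continuous there), the product rule gives
\[
\partial_\al\Bigl[-\tfrac{1-\al^2}{2}\rmP^{\al,\be}\Bigr]\Big|_{\al=1}=\al\,\rmP^{\al,\be}\big|_{\al=1}-\tfrac{1-\al^2}{2}\partial_\al\rmP^{\al,\be}\big|_{\al=1}=\rmP_{\wt x,x}^{\rstat,\be}(\tau\le n-1).
\]
If only continuity is available, we use the difference quotient in $\al$ instead: the vanishing factor $1-\al^2$ together with $0\le\rmP\le1$ gives the same limit. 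Either way this yields the first equality. Regularity in $\al$ should follow from the fact that the weights $e^{\be\al B_n(x+\ga_n)}$ depend smoothly on $\al$, combined with dominated convergence using the integrability in Lemma~\ref{B_poly}.

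For the second equality, we apply Theorem~\ref{main_thm} with the roles of $x$ and $\wt x$ swapped, i.e.\ we differentiate in the first variable. By symmetry of the product, $\partial_{\wt x}\EE[F_{\wt x}F_x]$ is given by \eqref{main_eq} with $x\leftrightarrow\wt x$. Now $\sgn(x-\wt x)=+1$, $\ind_{\{x<\wt x\}}=0$, and the exit term becomes $\al^2\EE[Q_0^{\al,\be}\{\ga_n>-\wt x\}]$. For $\wt x>0$ this equals $\al^2$, and for $\wt x=0$ it equals $\al^2$ almost surely since $\ga_n>0$. Hence
\[
\frac{1}{\be^2}\partial_{\wt x}\EE[\log Z_{\wt x}^{\al,\be}\log Z_x^{\al,\be}]=\tfrac{1-\al^2}{2}\rmP^{\al,\be}+\al^2 .
\]
Its $\al$-derivative at $\al=1$ is $-\rmP^{\rstat,\be}+2$. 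Rearranging gives the second equality, using that $\rmP$ is symmetric in the pair $(\wt x,x)$ because the definitions of $\tau$ and the product measure are symmetric.

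The main obstacle is rigor. First, we must justify that $\partial_\al$ acts on the $x$-derivative; this is fine because the right-hand side is an explicit function of $\al$, so we differentiate it directly, assuming only that it is continuous in $\al$. Second, the case $\wt x=0$ requires a boundary $\wt x$-derivative. We handle it by taking the one-sided derivative from the right, or by using the continuity of the explicit formula in $\wt x$.
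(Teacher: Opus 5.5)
Your proposal is correct and follows the paper's proof essentially step by step: you apply Theorem~\ref{main_thm} and its $x\leftrightarrow\wt x$ version in the regime where the exit term is identically $\al^2$, then differentiate the explicit right-hand side in $\al$ at $\al=1$, and the only analytic input is continuity of $\al\mapsto\rmP_{\wt x,x}^{\al,\be}(\tau\le n-1)$, which you sketch via dominated convergence as the paper does. Two small remarks: it is continuity, not the bound $0\le\rmP\le1$, that makes the difference quotient converge; and no one-sided derivative is needed at $\wt x=0$, since the theorem gives a two-sided derivative at every $\wt x\neq x$ and $\{\ga_n>0\}$ holds identically on $\DN$.
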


To conclude the main-results section, we discuss the zero-temperature
limit.  In this limit, the $\OCY$ polymer converges to Brownian
last-passage percolation (BLPP).  The free energy
is then replaced by a last-passage value, and the quenched polymer
measure concentrates on maximizing paths. 

Define 
$\overline{\DN}
  =\{0\leq\ga_1\leq\cdots\leq\ga_n\}.$
For $z\in\R$ and $\ga\in\overline{\DN}$, define
\[
  \cH_z^{\al}(\ga)
  =
  \al B_n(z+\ga_n)
  +\sum_{i=0}^{n-1}
  \bigl(
    B_i(z+\ga_{i+1})-B_i(z+\ga_i)
  \bigr)
  -\ga_n,
\]
and set
\begin{align*}
  L_z^{\al}
  =
  \sup_{\ga\in\overline{\DN}}
  \cH_z^{\al}(\ga).
\end{align*}
For any fixed $z$, the maximizer is almost surely unique; we denote it by 
\[
  \Gamma_z^{\al}
  =
  \bigl(
    \Gamma_z^{\al}(1),\dots,\Gamma_z^{\al}(n)
  \bigr)
  \in\overline{\DN}
\]
and call it the BLPP geodesic from $(z,0)$ to level $n$. Lastly, we may sometimes include the starting point of the geodesic by defining  $\Gamma_z^{\al}(0) = z$.

To match the definition of polymers, let us define 
$$\rmP_{\wt x,x}^{\al,\infty}
    (\tau\leq n-1) = \mathbb{P}\bigg(\bigcup_{i=0}^{n-1}\Big\{[\Gamma^\alpha_{\wt x}(i), \Gamma^\alpha_{\wt x}(i+1)]\cap  [\Gamma^\alpha_{x}(i), \Gamma^\alpha_{x}(i+1)] \neq \emptyset \Big\} \bigg),$$ 
    or in words, this is the the probability of two geodesics $\Gamma^\alpha_{\wt x}$ and $\Gamma^\alpha_x$ coalescing before level $n$. In addition, let us define 
    $$\rmP_0^{\al,\infty}(\ga_n>-x) = \mathbb{P}(\Gamma^\alpha_0(n)> -x).$$ With these definition, we will state our main result for the Brownian LPP, and its proof is given in Section~\ref{zero_sec}.
\begin{corollary}[\textsf{Zero-temperature BLPP identity}]\label{blpp_cor}
  Fix $n\in\Z_{>0}$, $\al\in\R$, and $x,\wt x\in\R$ with
  $x\neq\wt x$.  Then,
  \begin{align*}
    \frac{\partial}{\partial x}
    \EE\bigl[L_{\wt x}^{\al}L_x^{\al}\bigr]
    &=
    \frac{1-\al^2}{2}
    \sgn(\wt x-x)
    \rmP_{\wt x,x}^{\al,\infty}
    (\tau\leq n-1)
  +
    \al^2
    \rmP_0^{\al,\infty}(\ga_n>-x)
    -\frac{\al^2}{2}
    \bigl(1-\sgn(\wt x-x)\bigr).
  \end{align*}
\end{corollary}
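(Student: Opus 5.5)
We derive Corollary~\ref{blpp_cor} from Theorem~\ref{main_thm} by sending $\be\to\infty$. The left side converges by standard Laplace asymptotics. For the right side we need the geodesics to be unique and the derivative to be stable under the limit. We therefore work with integrated versions of the identity.

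\textbf{Step 1: the free energy converges to the last-passage value.} Note that $\be^{-n}\mu_\be(d\ga)=e^{-\be\ga_n}\,d\ga$ on $\DN$. Hence
\[
F_x^{\al,\be}=\be^{-1}\log\int_{\DN}e^{\be\cH_x^\al(\ga)}\,d\ga .
\]
Laplace's principle then gives $F_x^{\al,\be}\to L_x^\al$ almost surely, at least when $\al<1$, or with a suitable sign convention on $\al$ so that the sup is finite. Continuity of Brownian paths and the fact that $\cH$ is coercive in $\ga_n$ make this work. Moment bounds that are uniform in $\be\ge1$ should follow from the a priori estimates behind Lemma~\ref{B_poly}. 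Using domination, e.g. $|F|\le \sup|\cH|+C\be^{-1}\log(\cdots)$, we obtain $\EE[F_{\wt x}^{\al,\be}F_x^{\al,\be}]\to \EE[L_{\wt x}^\al L_x^\al]$, locally uniformly in $x$.

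\textbf{Step 2: the polymer measures concentrate on the geodesic.} For fixed $x$ the maximizer $\Gamma_x^\al$ is almost surely unique. Laplace concentration then gives $Q_x^{\al,\be}\Rightarrow\delta_{\Gamma_x^\al}$ almost surely. Consequently
\[
Q_0^{\al,\be}\{\ga_n>-x\}\to\ind\{\Gamma_0^\al(n)>-x\}
\]
whenever $\PP(\Gamma_0^\al(n)=-x)=0$. This holds for Lebesgue-a.e. $x$, which is enough after integration.

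For the coalescence term, the product measure $Q_{\wt x}^{\al,\be}\otimes Q_x^{\al,\be}$ concentrates on the pair of geodesics. The event $\{\tau\le n-1\}$ is defined through the strict inequalities $\wt x+\wt\ga_{k+1}>x+\ga_k$. Its limit matches the event that the segments $[\Gamma_{\wt x}(i),\Gamma_{\wt x}(i+1)]$ and $[\Gamma_x(i),\Gamma_x(i+1)]$ intersect, provided the boundary case $\Gamma_{\wt x}(k+1)=\Gamma_x(k)$ has probability zero.

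\textbf{Main obstacle: the boundary case.} Ruling out exact touching $\Gamma_{\wt x}(k+1)=\Gamma_x(k)$ is the step I expect to be hardest. I would argue it from the absolute continuity of the jump locations given the other levels: the environment on level $k$ is independent Brownian motion, and $\Gamma_x(k)$ has a density. Ordering of geodesics (planarity) can also help here.

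Once this is in place, bounded convergence gives convergence of both probabilities for a.e. $x$. It also gives convergence of the annealed coalescence probability, written with the geodesics $\Gamma$ in place of the polymer paths.

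\textbf{Step 3: integrate and differentiate.} Integrate \eqref{main_eq} in $x$ over an interval $[a,b]$ not containing $\wt x$, and let $\be\to\infty$. The right side is bounded by $|1-\al^2|/2+2\al^2$, so dominated convergence applies. This yields
\[
\EE[L_{\wt x}L_b]-\EE[L_{\wt x}L_a]=\int_a^b(\text{limit RHS})\,dx .
\]
To pass from this integral identity to the pointwise derivative, we need the limit integrand to be continuous in $x$. Continuity of $x\mapsto\PP(\Gamma_0(n)>-x)$ follows from $\Gamma_0(n)$ having no atoms. Continuity of the coalescence probability follows from the almost-sure continuity of geodesics in the starting point at fixed $x$.

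Finally, rewrite $\ind_{\{\wt x<x\}}=\tfrac12(1-\sgn(\wt x-x))$. This gives exactly the stated formula.
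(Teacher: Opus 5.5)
Your architecture matches the paper's: the Laplace limit $F_u^{\al,\be}\to L_u^\al$ with moments uniform in $\be\ge1$ (Lemma~\ref{FE_blpp}), almost-sure concentration $Q_z^{\al,\be}\Rightarrow\delta_{\Gamma_z^\al}$ (Lemma~\ref{Q_geo}), and Portmanteau on continuity sets. You then integrate \eqref{main_eq} over $[a,b]$ and recover the derivative from continuity of the limiting integrand. The hedge ``at least when $\al<1$'' in Step~1 is unnecessary: $B_n$ grows like $|t|^\theta$ with $\theta<1$ (Lemma~\ref{B_poly}), so $\cH_z^\al$ is coercive for every $\al\in\R$.

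\textbf{The touching event.} The genuine gap is the step you flag yourself: proving $\PP\bigl(\wt x+\Gamma_{\wt x}^\al(r)=x+\Gamma_x^\al(r-1)\bigr)=0$ for $1\le r\le n$. Your sketch cannot give this. Both points are functionals of the same environment, and the two geodesics meet and coalesce with positive probability. So the fact that each jump location has a density says nothing about whether these two dependent points coincide, and ``planarity'' does not exclude the touching configuration either.

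You also cannot settle for a.e.\ $x$ here. Step~3 needs continuity of $u\mapsto\rmP_{\wt x,u}^{\al,\infty}(\tau\le n-1)$ at every $u$. Stability of geodesics gives this only if, for that very $u$, the limiting pair almost surely avoids $\partial A_{\wt x,u}$.

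The missing idea is the exchange argument of Lemma~\ref{cont_sets}. Take $r\ge2$ and set $q=x+\Gamma_x^\al(r-1)$. On the touching event both geodesics pass through $(q,r-1)$, and optimality of each forces their two continuations from $(q,r-1)$ to have equal future scores. Hence the path made of the $x$-prefix and the $\wt x$-continuation is also maximizing. That path has two consecutive vertical edges at $q$, which is almost surely impossible for BLPP geodesics by \cite[Proposition~5.1]{RS26}. The case $r=1$ is the fixed-point event $\{\wt x+\Gamma_{\wt x}^\al(1)=x\}$.

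\textbf{The exit atom.} A smaller version of the same issue affects the exit term. Your a.e.-$x$ shortcut in Step~2 is enough for the integrated identity. But the corollary asserts the derivative at every $x\neq\wt x$. An atom of $\Gamma_0^\al(n)$ at $-x$ would make $u\mapsto\rmP_0^{\al,\infty}(\ga_n>-u)$ jump at $u=x$, since its right limit is $\PP(\Gamma_0^\al(n)\ge-x)$. The one-sided derivatives of $u\mapsto\EE[L_{\wt x}^\al L_u^\al]$ would then differ at $x$.

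In Step~3 you assert that $\Gamma_0^\al(n)$ has no atoms, but you never prove it, and the conditional-density heuristic is not rigorous. The terminal jump maximizes $B_{n-1}(t)+\al B_n(t)-t$ over $[T_{n-1},\infty)$, and the left endpoint $T_{n-1}$ depends on the same Brownian motions. The paper handles this with a countable union over rational $a$. On $\{T_n=c,\ T_{n-1}<c\}$, the point $c$ is the argmax of a drifted Brownian motion on $[a,\infty)$ for some rational $a\in(T_{n-1},c)$, and that argmax has a continuous law. The case $T_{n-1}=c$ is again ruled out by the absence of consecutive vertical edges.
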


\begin{remark}[Possible use of integrable formulas]
  Corollary~\ref{blpp_cor} expresses BLPP coalescence and terminal-exit
  probabilities through the spatial derivative of a two-point
  last-passage covariance.  Brownian LPP admits exact finite-dimensional
  formulas, including Johansson's multi-time formulas and Rahman's
  formulas with functional initial data
  \cite{Johansson17,Rahman24}.  These formulas do not directly cover the
  point-to-line Brownian-boundary covariance that appears here.  An
  explicit evaluation of the probabilities in
  Corollary~\ref{blpp_cor} would therefore require either a suitable
  point-to-line multi-point formula or an extension of the present
  identity to a geometry compatible with the available integrable
  formulas.  We leave this problem for future work.
\end{remark}


\subsection{Proof strategy}

We now briefly discuss the main steps of the proof.

\smallskip
\noindent
(1) \emph{Mollification.}
We regularize each Brownian environment by convolution with a standard
mollifier.  For this schematic discussion, we use the $\be=1$
notation.  The mollified partition function is differentiable in the
spatial variable, and
\begin{align}\label{eq_pre}
  \frac{\partial}{\partial x}
  \log Z_x^{\al,\eps}
  =
  \int_{\DN} Q_x^{\al,\eps}(d\ga)
  \bigg[
    \al\xi_n^\eps(x+\ga_n)
    +\sum_{i=0}^{n-1}
    \bigl(
      \xi_i^\eps(x+\ga_{i+1})
      -\xi_i^\eps(x+\ga_i)
    \bigr)
  \bigg],
\end{align}
where $\xi_i^\eps$ is the mollification of the white noise $\xi_i$
 on level $i$.

\smallskip
\noindent
(2) \emph{Gaussian integration by parts.}
We apply the  integration-by-parts formula in Malliavin calculus
to $\xi_i$ in \eqref{eq_pre}.  For the two-point free-energy
correlation, the result separates naturally into a bulk contribution
from the levels $0,\dots,n-1$ and a boundary contribution from the
terminal Brownian motion on level $n$.

\smallskip
\noindent
(3) \emph{Cancellation and explicit kernels.}
When the Malliavin derivative acts on the Gibbs density, additional
terms appear.  The bulk and boundary cancellation identities proved in
Appendix~\ref{cancel_app} show that these terms vanish after
integration in the white-noise variable.  The remaining expressions
are written in terms of
\[
  \Psi^\eps
  =
  \Phi^\eps\ast\phi^\eps,
  \qquad
  \Phi^\eps(r)
  =
  \int_0^r\phi^\eps(s)\,ds.
\]
The associated integration domains have a direct path interpretation:
the increment on level $i$ contributes the interval
$[x+\ga_i,x+\ga_{i+1}]$, while the terminal Brownian term contributes
the interval joining $0$ and $x+\ga_n$.

\smallskip
\noindent
(4) \emph{Removal of the mollification.}
As $\eps\downarrow0$, the kernel $\Psi^\eps$ converges, away from the
origin, to $\frac12\sgn$.  Appendix~\ref{moll_app} proves the
required local uniform convergence of the mollified polymer measures
and provides the estimates needed to pass to the limit away from the
diagonal $x=\wt x$.

\smallskip
\noindent
(5) \emph{Meeting-point symmetry.}
After the preceding steps, the bulk term is a signed functional of two
independent replicas.  Condition on their first meeting level, the
memoryless property of the exponential waiting times makes the two
continuations beyond the meeting point conditionally exchangeable.
Every contribution that is antisymmetric under interchange of these
continuations therefore has conditional mean zero.  The surviving term
is exactly the indicator that the two paths meet, which yields the
coalescence probability.  The analogous symmetry in the boundary term
leaves the one-point terminal-location of the polymer.

In the stationary case, the same calculation can be carried out with a
nonlinear test function of finitely many free energies. 
The Malliavin chain rule yields Proposition~\ref{stat_stein}.  
When the test function depends
only on consecutive spatial increments, the common terminal exit term
cancels telescopically, leading to the classical multivariate Gaussian
Stein identity.  This proves Corollary~\ref{buse_cor}.

\medskip 
\noindent\textbf{Frequently used notation and conventions.} The level $n$ is fixed throughout most of the paper and is suppressed
from the principal polymer notation.  The following conventions are
used repeatedly.

\begin{description}[leftmargin=4.45cm,labelwidth=4.10cm,
  style=multiline,itemsep=2pt,topsep=4pt]
  \item[$\R_+,\DN,\overline{\DN}$]
    $\R_+=[0,\infty)$,
    $\DN=\{0<\ga_1<\cdots<\ga_n\}$, and
    $\overline{\DN}=\{0\leq\ga_1\leq\cdots\leq\ga_n\}$.

  \item[$\al,\be,\eps$]
    The terminal-condition strength, inverse temperature, and
    mollification parameter, respectively.

  \item[$B_i,\xi_i,B_i^\eps,\xi_i^\eps$]
    The two-sided Brownian environments, their white-noise derivatives,
    and the corresponding mollified objects; see
    \eqref{not_moll}.

  \item[$\ga,\wt\ga,\eta,\mu_\be$]
    Polymer jump-location vectors, replicas of such vectors, and the
    joint law of the first $n$ jump times of a rate-$\be$ Poisson
    process.  We use $\ga_0=\wt\ga_0=\eta_0=0$.

  \item[$h;\ H,Z,\cZ,Q,F$]
    The terminal profile, Hamiltonian, normalized partition function,
    unnormalized partition function, quenched polymer measure, and
    free energy.  For a general terminal profile, we write
    $Z_x^{h,\al,\be}$ and $Q_x^{h,\al,\be}$; when $h=B_n$, the
    superscript $h$ is omitted.  When the deterministic factor
    $\be^{-n}$ is suppressed, we write
    $\cZ_x^{h,\al,\be}=\be^n Z_x^{h,\al,\be}$.  The full mollified
    notation is $H_x^{\al,\eps}$, $Z_x^{\al,\be,\eps}$,
    $\cZ_x^{\al,\be,\eps}=\be^n Z_x^{\al,\be,\eps}$, and
    $Q_x^{\al,\be,\eps}$, while
    $F_x^{\al,\be}=\be^{-1}\log Z_x^{\al,\be}$.  The value $\eps=0$
    is omitted, and $\be=1$ is likewise suppressed unless retaining it
    improves clarity.

  \item[$\rfree,\rstat$]
    The shorthand for $\al=0$ and $\al=1$, respectively; when $\be=1$, the parameter $\be$
    is omitted.  At general
    inverse temperature we write, for example,
    $F_x^{\rstat,\be}=F_x^{1,\be}$.  The same convention applies to $Z,Q$, and $\rmP$.

  \item[$\cB^{\be}$]
    The anchored horizontal stationary free-energy profile
    $\cB^{\be}(x)=F_x^{\rstat,\be}-F_0^{\rstat,\be}$; see
    \eqref{buse_eq}.

  \item[$\tau,\fB_k,\fB_\infty$]
    The first meeting level of two replicas and the corresponding
    first-meeting events $\fB_k=\{\tau=k\}$ and
    $\fB_\infty=\{\tau=\infty\}$.

  \item[$\rmP_{y,x}^{\al,\be}$]
    The averaged two-replica law: the Brownian environment is sampled
    first, followed by two independent paths under
    $Q_y^{\al,\be}\otimes Q_x^{\al,\be}$; see
    \eqref{coal_prob},
    \eqref{not_exp}.

  \item[$P_i,\Xi_{i,\eps}^{y,x},\mathfrak D_i^{y,x}$]
    The spatial coordinate on level $i$, the mollified four-point
    kernel, and its sign-kernel limit; see
    \eqref{not_path}.  The fraktur symbol
    $\mathfrak D$ is reserved for this path kernel and is distinct from
    the Malliavin derivative $D$.

  \item[$L_x^{\al},\Gamma_x^{\al}$]
    The zero-temperature BLPP passage value and its unique maximizing
    geodesic.  The corresponding quenched and averaged laws carry the
    superscript $\infty$, as in $Q_x^{\al,\infty}$ and
    $\rmP_{y,x}^{\al,\infty}$; see
    \eqref{not_zero}.

  \item[$D_{i,y},\EE,\PP,\ind$]
    The Malliavin derivative at level $i$ and position $y$, expectation,
    probability, and the indicator function.  Unless another law is
    specified, $\EE$ and $\PP$ refer to the Brownian environment.
\end{description}

For ease of reference, we collect the recurrent mollification notation
in one display.  Fix a nonnegative even function
$\phi\in C_c^\infty(\R)$ with $\int_\R\phi=1$.  Then
\begin{align}\label{not_moll}
  \begin{aligned}
    \phi^\eps(r)
      &=\eps^{-1}\phi(r/\eps),
    &
    \xi_i^\eps(x)
      &=\int_\R \phi^\eps(x-y)\,\xi_i(dy),
    \\
    B_i^\eps(t)
      &=\int_0^t\xi_i^\eps(s)\,ds,
    &
    \Phi^\eps(r)
      &=\int_0^r\phi^\eps(s)\,ds,
    \AND
    \Psi^\eps
      =\Phi^\eps\ast\phi^\eps.
  \end{aligned}
\end{align}
For $z\in\R$, $\ga\in\overline{\DN}$, and $0\leq i\leq n$, set
$P_i(z,\ga)=z+\ga_i$.  For $0\leq i\leq n-1$, define
\begin{align}\label{not_path}
  \begin{aligned}
    \Xi_{i,\eps}^{y,x}(\eta,\ga)
      &={}
      \Psi^\eps\bigl(P_{i+1}(y,\eta)-P_{i+1}(x,\ga)\bigr)
      -\Psi^\eps\bigl(P_i(y,\eta)-P_{i+1}(x,\ga)\bigr)
    \\
      &\qquad \quad
      -\Psi^\eps\bigl(P_{i+1}(y,\eta)-P_i(x,\ga)\bigr)
      +\Psi^\eps\bigl(P_i(y,\eta)-P_i(x,\ga)\bigr),
    \\
    \mathfrak D_i^{y,x}(\eta,\ga)
      &={}
      \sgn\bigl(P_{i+1}(y,\eta)-P_{i+1}(x,\ga)\bigr)
      -\sgn\bigl(P_i(y,\eta)-P_{i+1}(x,\ga)\bigr)
    \\
     &\qquad \quad
      -\sgn\bigl(P_{i+1}(y,\eta)-P_i(x,\ga)\bigr)
      +\sgn\bigl(P_i(y,\eta)-P_i(x,\ga)\bigr).
  \end{aligned}
\end{align}
Geometrically, $\mathfrak D_i^{y,x}(\eta,\ga)$ measures the oriented
interlacing of the two horizontal segments
\[
  [P_i(y,\eta),P_{i+1}(y,\eta)]
  \quad\text{and}\quad
  [P_i(x,\ga),P_{i+1}(x,\ga)].
\]
Away from endpoint equalities,
\[
  \mathfrak D_i^{y,x}(\eta,\ga)\in\{-2,0,2\},
\]
with value $-2$ in the ordering
\[
  P_i(y,\eta)<P_i(x,\ga)<P_{i+1}(y,\eta)<P_{i+1}(x,\ga),
\]
value $2$ in the reversed ordering
\[
  P_i(x,\ga)<P_i(y,\eta)<P_{i+1}(x,\ga)<P_{i+1}(y,\eta),
\]
and value $0$ otherwise.
See Figure \ref{fig:D_geom} for an illustration.

\begin{figure}[t]
\centering

\begin{minipage}{0.47\textwidth}
\centering
\begin{tikzpicture}[scale=0.95]
  \draw[->] (0,0) -- (5.3,0);
  \node[left] at (0,0) {$i$};

  \draw[red, line width=1.1pt] (0.9,0) -- (2.2,0);
  \fill[red] (0.9,0) circle (1.6pt);
  \draw[red,fill=white,line width=.9pt] (2.2,0) circle (1.9pt);

  \draw[blue, line width=1.1pt] (3.2,0) -- (4.7,0);
  \fill[blue] (3.2,0) circle (1.6pt);
  \draw[blue,fill=white,line width=.9pt] (4.7,0) circle (1.9pt);

  \node[red, above=3pt, font=\scriptsize] at (0.9,0) {$P_i(y,\eta)$};
  \node[red, above=3pt, font=\scriptsize] at (2.45,0) {$P_{i+1}(y,\eta)$};
  \node[blue, below=4pt, font=\scriptsize] at (3.2,0) {$P_i(x,\ga)$};
  \node[blue, below=4pt, font=\scriptsize] at (4.95,0) {$P_{i+1}(x,\ga)$};
\end{tikzpicture}

\vspace{1mm}
{\small no interlacing: $\mathfrak D_i^{y,x}=0$}
\end{minipage}
\hfill
\begin{minipage}{0.47\textwidth}
\centering
\begin{tikzpicture}[scale=0.95]
  \draw[->] (0,0) -- (5.3,0);
  \node[left] at (0,0) {$i$};

  \draw[red, line width=1.1pt] (0.8,0) -- (3.2,0);
  \fill[red] (0.8,0) circle (1.6pt);
  \draw[red,fill=white,line width=.9pt] (3.2,0) circle (1.9pt);

  \draw[blue, line width=1.1pt] (2.0,0) -- (4.6,0);
  \fill[blue] (2.0,0) circle (1.6pt);
  \draw[blue,fill=white,line width=.9pt] (4.6,0) circle (1.9pt);

  \node[red, above=3pt, font=\scriptsize] at (0.8,0) {$P_i(y,\eta)$};
  \node[red, above=3pt, font=\scriptsize] at (3.2,0) {$P_{i+1}(y,\eta)$};
  \node[blue, below=4pt, font=\scriptsize] at (2.0,0) {$P_i(x,\ga)$};
  \node[blue, below=4pt, font=\scriptsize] at (4.6,0) {$P_{i+1}(x,\ga)$};
\end{tikzpicture}

\vspace{1mm}
{\small left-to-right interlacing: $\mathfrak D_i^{y,x}=-2$}
\end{minipage}

\vspace{4mm}

\makebox[\textwidth][c]{%
\begin{minipage}{0.47\textwidth}
\centering
\begin{tikzpicture}[scale=0.95]
  \draw[->] (0,0) -- (5.3,0);
  \node[left] at (0,0) {$i$};

  \draw[blue, line width=1.1pt] (0.8,0) -- (3.2,0);
  \fill[blue] (0.8,0) circle (1.6pt);
  \draw[blue,fill=white,line width=.9pt] (3.2,0) circle (1.9pt);

  \draw[red, line width=1.1pt] (2.0,0) -- (4.6,0);
  \fill[red] (2.0,0) circle (1.6pt);
  \draw[red,fill=white,line width=.9pt] (4.6,0) circle (1.9pt);

  \node[blue, above=3pt, font=\scriptsize] at (0.8,0) {$P_i(x,\ga)$};
  \node[blue, above=3pt, font=\scriptsize] at (3.2,0) {$P_{i+1}(x,\ga)$};
  \node[red, below=4pt, font=\scriptsize] at (2.0,0) {$P_i(y,\eta)$};
  \node[red, below=4pt, font=\scriptsize] at (4.6,0) {$P_{i+1}(y,\eta)$};
\end{tikzpicture}

\vspace{1mm}
{\small right-to-left interlacing: $\mathfrak D_i^{y,x}=2$}
\end{minipage}}

\caption{Geometric meaning of $\mathfrak D_i^{y,x}(\eta,\ga)$ on level $i$. In each panel, the solid point marks the starting point \(P_i\) of a horizontal segment, and the open circle marks its endpoint \(P_{i+1}\).}
\label{fig:D_geom}
\end{figure}

Thus, away from the four equality hyperplanes in
\eqref{not_path},
$\Xi_{i,\eps}^{y,x}(\eta,\ga)\to
\frac12\mathfrak D_i^{y,x}(\eta,\ga)$ as $\eps\downarrow0$.
When the starting points and the two underlying paths are fixed, we use
the shorter forms
\begin{align}\label{not_pathb}
  \Xi_{i,\eps}
  \bigl(P_i(y,\eta),P_i(x,\ga)\bigr)
  =\Xi_{i,\eps}^{y,x}(\eta,\ga),
  \qquad
  \mathfrak D_i
  \bigl(P_i(y,\eta),P_i(x,\ga)\bigr)
  =\mathfrak D_i^{y,x}(\eta,\ga).
\end{align}
The subscript $i$ records that the next-level coordinates
$P_{i+1}(y,\eta)$ and $P_{i+1}(x,\ga)$ enter these shorthand
expressions.

Two expectation conventions used in the proofs are
\begin{align}\label{not_exp}
  \begin{aligned}
    \EE_{\eta,\ga}^{\mu_\be\times2}[G]
      &=\int_{\DN^2} G(\eta,\ga)\,
        \mu_\be(d\eta)\mu_\be(d\ga),
    \\
    \EE_{(y;\eta),(x;\ga)}^{\mathrm{poly}\times2}[G]
      &=\int_{\DN^2} G(\eta,\ga)\,
        Q_y^{\al,\be}(d\eta)Q_x^{\al,\be}(d\ga),
    \\
    \rmP_{y,x}^{\al,\be}(A)
      &=\EE\left[
        \EE_{(y;\eta),(x;\ga)}^{\mathrm{poly}\times2}
        [\ind_A]
      \right].
  \end{aligned}
\end{align}
The second expectation in \eqref{not_exp} is quenched,
and hence is itself a random variable.
When the parameter $\be$ is suppressed in the bulk proof, we write
$\mu=\mu_\be$ and $\EE_{\wt\ga,\ga}^{\mu\times2}$ for the
first expectation.  Primed path variables and other starting points
are handled by the same convention.

Finally, the zero-temperature notation is
\begin{align}\label{not_zero}
    \cH_z^{\al}(\ga)
      &=H_z^{\al}(\ga)-\ga_n,
    \quad
    L_z^{\al}
      =\sup_{\ga\in\overline{\DN}}\cH_z^{\al}(\ga),
    \quad
    \Gamma_z^{\al}
      =\mathop{\rm argmax}_{\ga\in\overline{\DN}}
        \cH_z^{\al}(\ga),
   \AND
    Q_z^{\al,\infty}
      =\delta_{\Gamma_z^{\al}}.
\end{align}
For two sets,  $U\Subset V$ means that $U$ is a compact subset of $V$.

\medskip

For the reader's convenience, we indicate where the proofs of the
preceding results are located.  The proof of Theorem~\ref{main_thm}
occupies Sections~\ref{basic_sec}--\ref{main_done}, with the final
assembly of the bulk and boundary contributions carried out in
Section~\ref{main_done}.  The proofs of
Corollaries~\ref{flat_cor}, \ref{stat_exit}, and \ref{stat_coal}
are also given in Section~\ref{main_done}.  Proposition~\ref{stat_stein}
is proved in   Section \ref{stein_sec} on the stationary Stein identity,
and Corollary~\ref{buse_cor} in Section \ref{buse_sec} on the Brownian
Busemann process.  Finally, the proof of the zero-temperature
Corollary~\ref{blpp_cor} is given in Section \ref{zero_sec}.
We collect all technical and auxiliary  results in the Appendices.

\section{Proofs of the main results}\label{proof_sec}

\subsection{Mollified partition functions and the basic decomposition}
\label{basic_sec}

We begin with the proof of Theorem~\ref{main_thm}.  Throughout
Subsections~\ref{basic_sec}--\ref{main_done},
we fix $\al$ and omit it from superscripts.  Thus, for example,
$H_x^\eps=H_x^{\al,\eps}$ and
$Q_x^{\be,\eps}=Q_x^{\al,\be,\eps}$.

For the Gaussian integration-by-parts calculation,
it is convenient to
suppress the deterministic factor $\be^{-n}$
in the partition function.
Accordingly, within these subsections, we write
\begin{align}\label{moll_Q}
  \begin{aligned}
    H_x^\eps(\ga)
      &={}
      \al B_n^\eps(x+\ga_n)
      +\sum_{i=0}^{n-1}
        \bigl(
          B_i^\eps(x+\ga_{i+1})-B_i^\eps(x+\ga_i)
        \bigr),
    \\
    \cZ_x^{\be,\eps}
      &=\int_{\DN}e^{\be H_x^\eps(\ga)}\,\mu_\be(d\ga)
       =\be^n Z_x^{\al,\be,\eps},
    \AND
    Q_x^{\be,\eps}(d\ga)
      =\frac{e^{\be H_x^\eps(\ga)}}
        {\cZ_x^{\be,\eps}}\,\mu_\be(d\ga).
  \end{aligned}
\end{align}
The polymer measure $Q$ is unchanged by this normalization.  

The mollified fields and kernels in
\eqref{not_moll} satisfy
\begin{align*}
  B_i^\eps(t)
  =\int_\R\phi^\eps(r)
    \bigl(B_i(t-r)-B_i(-r)\bigr)\,dr.
\end{align*}
In particular, $x\mapsto H_x^\eps(\ga)$ is smooth.  Lemma~\ref{pf_uxeps1}
justifies differentiation under the $\mu_\be$-integral and gives,
almost surely,

\noi
\begin{align}\label{sp_der}
  \begin{aligned}
    \frac{\partial}{\partial x}
     \Big(  \frac1\be\log \cZ_x^{\be,\eps} \Big)
    &=    \int_{\DN} Q_x^{\be,\eps}(d\ga)
    \bigg[   \al\xi_n^\eps(x+\ga_n)
      +\sum_{i=0}^{n-1}
      \bigl(
        \xi_i^\eps(x+\ga_{i+1})
        -\xi_i^\eps(x+\ga_i)
      \bigr)
    \bigg].
  \end{aligned}
\end{align}
We separate the terminal and bulk contributions by setting
\begin{align}\label{T12}
  \begin{aligned}
    T_{1,\eps}(x,\wt x)
      &={}
      \EE\bigg[
        \Big(  \log \cZ_{\wt x}^{\be,\eps} \Big)
        \int_{\DN} Q_x^{\be,\eps}(d\ga)\,
          \al\xi_n^\eps(x+\ga_n)
      \bigg],
    \\
    T_{2,\eps}(x,\wt x)
      &={}
      \EE\bigg[
        \Big(  \log \cZ_{\wt x}^{\be,\eps} \Big)
        \int_{\DN} Q_x^{\be,\eps}(d\ga)
        \sum_{i=0}^{n-1}
        \bigl(
          \xi_i^\eps(x+\ga_{i+1})
          -\xi_i^\eps(x+\ga_i)
        \bigr)
      \bigg].
  \end{aligned}
\end{align}
For $x\neq\wt x$, Proposition~\ref{app_der} and
\eqref{sp_der} imply
\begin{align}
  \begin{aligned}
    \frac{\partial}{\partial x}
    \EE\bigl[
      F_{\wt x}^{\al,\be}F_x^{\al,\be}
    \bigr]
    =\lim_{\eps\downarrow0}
      \left
      \{
        \frac1\be T_{1,\eps}(x,\wt x)
        +\frac1\be T_{2,\eps}(x,\wt x)
      \right\}.
  \end{aligned}
\end{align}
It remains to identify the two limits on the right-hand side.

\subsection{The bulk term}

\begin{proposition}\label{bulk}
  For $x\neq\wt x$, we have 
  $
    \frac{1}{\be}T_{2,\eps}(x,\wt x)
    \xrightarrow{\eps\downarrow0}
    \frac12\sgn(\wt x-x)\rmP_{\wt x,x}^{\be}\bigl(\tau\le n-1\bigr).
  $
\end{proposition}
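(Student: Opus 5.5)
We apply Gaussian integration by parts to each white noise $\xi_i$, $0\le i\le n-1$, in $T_{2,\eps}$, then remove the mollifier, and finally use a meeting-point symmetry.

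\textbf{Step 1: integration by parts.} Write $\xi_i^\eps(z)=\int\phi^\eps(z-y)\,\xi_i(dy)$. Fubini together with the integrability bounds of Lemma~\ref{pf_uxeps1} gives
\[
T_{2,\eps}=\sum_{i=0}^{n-1}\int_\R \EE\Big[D_{i,y}\Big(\log\cZ_{\wt x}^{\be,\eps}\int Q_x^{\be,\eps}(d\ga)\bigl(\phi^\eps(x+\ga_{i+1}-y)-\phi^\eps(x+\ga_i-y)\bigr)\Big)\Big]dy .
\]
The Malliavin derivative produces two kinds of terms.

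\emph{First kind.} $D_{i,y}$ acts on $\log\cZ_{\wt x}^{\be,\eps}$. This gives
\[
\be\int Q_{\wt x}^{\be,\eps}(d\eta)\bigl(\Phi^\eps(\wt x+\eta_{i+1}-y)-\Phi^\eps(\wt x+\eta_i-y)\bigr),
\]
since $D_{i,y}B_i^\eps(t)=\Phi^\eps(t-y)-\Phi^\eps(-y)$ and the constant term cancels in the increment.

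\emph{Second kind.} $D_{i,y}$ acts on the Gibbs density of $Q_x^{\be,\eps}$. This produces a covariance term under $Q_x^{\be,\eps}$.

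I would invoke the bulk cancellation identity of Appendix~\ref{cancel_app} to show that the second kind vanishes after the $dy$-integration. Heuristically, the resulting integrand is antisymmetric when the replicas $\ga,\ga'$ of $Q_x$ are swapped.

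For the first kind, integrating in $y$ turns $\int\Phi^\eps(a-y)\phi^\eps(b-y)\,dy$ into $\Psi^\eps(a-b)$. Hence
\[
\tfrac1\be T_{2,\eps}=\sum_{i=0}^{n-1}\EE\Big[\EE^{\mathrm{poly}\times2}_{(\wt x;\eta),(x;\ga)}\big[\Xi_{i,\eps}^{\wt x,x}(\eta,\ga)\big]\Big],
\]
where the replica measures are the mollified ones.

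\textbf{Step 2: removal of the mollifier.} We have $\Psi^\eps\to\frac12\sgn$ away from $0$, with $|\Psi^\eps|\le\frac12$. The endpoint-equality hyperplanes have zero $\mu_\be^{\otimes2}$-measure. Appendix~\ref{moll_app} gives convergence of $Q^{\be,\eps}\to Q^\be$ together with uniform integrability. Dominated convergence then yields the limit
\[
\tfrac12\,\rmP_{\wt x,x}^{\be}\Big[\textstyle\sum_{i=0}^{n-1}\mathfrak D_i^{\wt x,x}\Big].
\]

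\textbf{Step 3: meeting-point symmetry (the main obstacle).} Take $\wt x<x$; the other case is symmetric. On $\{\tau=\infty\}$ the two paths stay strictly ordered, so $\wt x+\eta_{i+1}<x+\ga_i$ for every $i$. Hence every $\mathfrak D_i=0$ there, and the claim reduces to
\[
\rmP\Big[\textstyle\sum_i\mathfrak D_i;\ \tau=k\Big]=-2\,\rmP(\tau=k)\qquad\text{for each }k\le n-1,
\]
which matches $\tfrac12\sgn(\wt x-x)=-\tfrac12$ times $2$.

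\emph{Levels below $k$.} For $i<k$ the segments are disjoint, so $\mathfrak D_i=0$.

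\emph{Level $k$.} At level $k$ the ordering is $\wt x+\eta_k<x+\ga_k<\wt x+\eta_{k+1}$. Depending on whether $\wt x+\eta_{k+1}<x+\ga_{k+1}$ or not, we get $\mathfrak D_k=-2$ or $0$.

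\emph{Swap involution.} To handle this, I would use the following involution. On $\{\tau=k\}$, exchange the two continuations after the meeting point $x+\ga_k$ on level $k$. That is, swap $(\wt x+\eta_j)_{j>k}$ with $(x+\ga_j)_{j>k}$. Three properties make this work.
\begin{itemize}
\item The Hamiltonian sum $H_{\wt x}(\eta)+H_x(\ga)$ is preserved, since both paths cover level $k$ from $x+\ga_k$ onward.
\item By the memoryless property, the product $\mu_\be\otimes\mu_\be$ restricted to the event is preserved. The $e^{-\be\ga_n}$ factors depend only on total lengths, which are exchanged.
\item The event $\{\tau=k\}$ is preserved.
\end{itemize}
So the involution preserves the annealed two-replica law on $\{\tau=k\}$.

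Under the swap, the configurations with $\mathfrak D_k=-2$ and $\mathfrak D_k=0$ at level $k$ are not simply exchanged. Instead, the pair $\bigl(\mathfrak D_k,\sum_{j>k}\mathfrak D_j\bigr)$ transforms so that the total $\sum_{i\ge k}\mathfrak D_i$ becomes $-4-$ (original total). This would make the mean $-2$. Verifying this algebraic relation is the delicate step, and I would check it by writing $\mathfrak D$ in terms of the sign kernel. The key fact is that swapping the roles of the two paths above level $k$ flips each $\mathfrak D_j$, $j>k$, to $-\mathfrak D_j$. For level $k$ itself, the change $\mathfrak D_k\mapsto -4-\mathfrak D_k$, i.e. $-2\leftrightarrow -2$ and $0\leftrightarrow -4$, must be reconciled by carefully accounting for the endpoint conventions. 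If needed, I would instead telescope $\sum_i\mathfrak D_i$ into a boundary expression $\sgn(\text{terminal difference})-\sgn(\text{initial difference})$ plus crossing terms. The swap kills the antisymmetric terminal sign, leaving $-\sgn(\wt x-x)$-type constants times $\ind_{\{\tau=k\}}$.
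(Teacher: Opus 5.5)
Your Steps 1 and 2 follow the paper and are fine. For a single fixed $x$, total-variation convergence of $Q^{\be,\eps}$ together with the fact that the equality hyperplanes are $Q\otimes Q$-null suffices to replace $\Xi_{i,\eps}$ by $\frac12\mathfrak D_i$.

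Your swap involution in Step 3 is also a sound device; it is a change-of-variables form of the conditioning/memoryless argument in Lemma~\ref{quen_lem}. On $\{\tau=k\}$ it is an affine map with unit Jacobian. It preserves $\eta_n+\ga_n$: the individual lengths are not exchanged, but the density $\be^{2n}e^{-\be(\eta_n+\ga_n)}$ sees only the sum. It also preserves $H_{\wt x}(\eta)+H_x(\ga)$. So it leaves even the quenched measure $Q_{\wt x}^{\be}\otimes Q_x^{\be}$ restricted to $\{\tau=k\}$ invariant. The gap is that the computation you perform with this involution is wrong in two places, so Step 3 as written does not prove the proposition.

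First, your reduced target is off by a factor of two. The limit is $\frac12\rmP[\sum_i\mathfrak D_i]$, and it must equal $-\frac12\rmP(\tau\le n-1)$ when $\wt x<x$. What you need is therefore $\rmP[\sum_i\mathfrak D_i;\,\tau=k]=-\rmP(\tau=k)$. Your identity with $-2$ would give twice the asserted limit.

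Second, the rule $\mathfrak D_k\mapsto-4-\mathfrak D_k$ cannot hold, since you already showed $\mathfrak D_k\in\{0,-2\}$ on $\{\tau=k\}$. There is no endpoint convention to reconcile. The correct bookkeeping goes as follows. On $\{\tau=k\}$ the inequalities $P_k(\wt x,\eta)<P_k(x,\ga)\le P_{k+1}(x,\ga)$ and $P_k(x,\ga)<P_{k+1}(\wt x,\eta)$ fix the last three signs in \eqref{not_path}, giving
\[
\mathfrak D_k^{\wt x,x}(\eta,\ga)=\sgn\bigl(P_{k+1}(\wt x,\eta)-P_{k+1}(x,\ga)\bigr)-1 .
\]
The swap exchanges $P_j(\wt x,\eta)$ and $P_j(x,\ga)$ for every $j\ge k+1$. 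Hence $\mathfrak D_k\mapsto-2-\mathfrak D_k$ (that is, $0\leftrightarrow-2$), and $\mathfrak D_j\mapsto-\mathfrak D_j$ for $j>k$. It follows that $\sum_{i\ge k}\mathfrak D_i\mapsto-2-\sum_{i\ge k}\mathfrak D_i$, so the mean on $\{\tau=k\}$ is $-1$. Summing over $k$ gives $\frac12\sgn(\wt x-x)\rmP(\tau\le n-1)$, which is exactly \eqref{quen_b}.

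Drop the telescoping fallback. The quantity $\sum_i\mathfrak D_i+\sgn(P_n(\wt x,\eta)-P_n(x,\ga))$ is not pathwise constant on $\{\tau=k\}$; for instance, with $n=2$ and $k=0$ it takes the values $3$ and $-5$ on a swapped pair. Only its average under the swap-invariant law equals $-1$, so the symmetry argument cannot be avoided.
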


\begin{proof}
  For brevity, set 
  $G_x^\eps(\ga) = \frac{e^{\be H_x^\eps(\ga)}}{\cZ_x^{\be,\eps}}$
      and for $i\in\{0, \ldots, n-1\}$,

      \noi
  \begin{align*}
    a_{i,\eps}^{x,\ga}(y)
      &=
      \phi^\eps\bigl(P_{i+1}(x,\ga)-y\bigr)
      -\phi^\eps\bigl(P_i(x,\ga)-y\bigr),
      \\
    A_{i,\eps}^{z,\eta}(y)
      &={}
      \Phi^\eps\bigl(P_{i+1}(z,\eta)-y\bigr)
      -\Phi^\eps\bigl(P_i(z,\eta)-y\bigr).
  \end{align*}
  By \eqref{not_moll} and \eqref{MD_H},
  we have 
  $D_{i,y}H_z^\eps(\eta)
      =A_{i,\eps}^{z,\eta}(y)$
      and
  \begin{align}\label{bulk_der}
    \begin{aligned}
      \xi_i^\eps\bigl(P_{i+1}(x,\ga)\bigr)
      -\xi_i^\eps\bigl(P_i(x,\ga)\bigr)
      =\int_\R a_{i,\eps}^{x,\ga}(y)\,\xi_i(dy).
    \end{aligned}
  \end{align}

 From the term $T_{2,\eps}$ in \eqref{T12}, using
  \eqref{bulk_der}, Fubini's theorem, and Gaussian
  integration by parts from Lemma~\ref{ibp}, we obtain
    \begin{align} \label{bulk_ibp}
    \begin{aligned}
      T_{2,\eps}(x,\wt x)
      =
      \sum_{i=0}^{n-1}
      \EE\bigg[
        \int_{\DN}\mu_\be(d\ga)
        \int_\R
          a_{i,\eps}^{x,\ga}(y)
        D_{i,y}\Bigl(  G_x^\eps(\ga)\log \cZ_{\wt x}^{\be,\eps}   \Bigr)
        \,dy    \bigg].
    \end{aligned}
  \end{align}
In the calculation above, the Sobolev facts needed follow from
  Lemma~\ref{lem_MD1}, Proposition~\ref{prop_MD}, and
  \eqref{cancel_DQ}.  A fully formal justification is
  obtained by first replacing the polymer measure and the free energy by their truncated
  versions.  For the truncated versions, the product
  rule, Fubini's theorem, and Gaussian integration by parts apply
  directly.
  One then lets the truncation parameter $R\to\infty$, using   Lemma~\ref{lem_MD1},
  Proposition~\ref{prop_MD}, and dominated convergence.

 To continue from \eqref{bulk_ibp}, expanding the Malliavin derivative by the product rule gives
  \begin{align}\label{bulk_split}
    \begin{aligned}
      T_{2,\eps}(x,\wt x)
      ={}&
      \sum_{i=0}^{n-1}
      \EE\bigg[
        \int_\R
         \big(  D_{i,y}\log \cZ_{\wt x}^{\be,\eps} \big)
          \int_{\DN}
            a_{i,\eps}^{x,\ga}(y)
            Q_x^{\be,\eps}(d\ga)
        \,dy
      \bigg]
      +\cR_{2,\eps},
\end{aligned}
\end{align}
where
$$
      \cR_{2,\eps}
    =
      \sum_{i=0}^{n-1}
      \EE\bigg[
        \big(  \log \cZ_{\wt x}^{\be,\eps} \big)
        \int_{\DN}\mu_\be(d\ga)
        \int_\R
          a_{i,\eps}^{x,\ga}(y)
          D_{i,y}G_x^\eps(\ga)
        \,dy
      \bigg].
$$
  Lemma~\ref{cancel_b} applies pathwise to the inner two
  integrals in $\cR_{2,\eps}$ and yields
  $\cR_{2,\eps}=0$.

  The deterministic factor by which the partition function in
  \eqref{moll_Q} differs from the normalized one
  has zero Malliavin derivative.  Hence Proposition~\ref{prop_MD},
  together with \eqref{MD_H}, gives
  \begin{align}\label{bulk_DZ}
    \begin{aligned}
      D_{i,y}\log \cZ_{\wt x}^{\be,\eps}
      =\be\int_{\DN}
        A_{i,\eps}^{\wt x,\eta}(y)
        Q_{\wt x}^{\be,\eps}(d\eta),
      \qquad 0\leq i\leq n-1.
    \end{aligned}
  \end{align}
  Substituting \eqref{bulk_DZ} into
  \eqref{bulk_split} and using Fubini once more, we find
  \begin{align}\label{bulk_conv}
    \begin{aligned}
      \frac1\be T_{2,\eps}(x,\wt x)
      =\EE\bigg[
        \int_{\DN^2}
        \sum_{i=0}^{n-1}
          \left(
            \int_\R
              A_{i,\eps}^{\wt x,\eta}(y)
              a_{i,\eps}^{x,\ga}(y)
            \,dy
          \right)
        Q_{\wt x}^{\be,\eps}(d\eta)
        Q_x^{\be,\eps}(d\ga)
      \bigg].
    \end{aligned}
  \end{align}

  Since $\phi^\eps$ is even and
  $\Psi^\eps=\Phi^\eps\ast\phi^\eps$, one has
  \begin{align*}
    \int_\R
      \Phi^\eps(a-y)\phi^\eps(b-y)\,dy
    =\Psi^\eps(a-b).
  \end{align*}
  Expanding the product in the inner integral of
  \eqref{bulk_conv} into four terms and applying this
  identity to each term gives, by the definition in
  \eqref{not_path},
  \begin{align*}
    \int_\R
      A_{i,\eps}^{\wt x,\eta}(y)
      a_{i,\eps}^{x,\ga}(y)
    \,dy
    =\Xi_{i,\eps}^{\wt x,x}(\eta,\ga).
  \end{align*}
  We have therefore proved the exact prelimit identity
  \begin{align}\label{T2_Xi}
    \begin{aligned}
      \frac1\be T_{2,\eps}(x,\wt x)
      =\EE\bigg[
        \int_{\DN^2}
        \sum_{i=0}^{n-1}
          \Xi_{i,\eps}^{\wt x,x}(\eta,\ga)
        Q_{\wt x}^{\be,\eps}(d\eta)
        Q_x^{\be,\eps}(d\ga)
      \bigg].
    \end{aligned}
  \end{align}

  Apply Proposition~\ref{T_uc}, or equivalently
  \eqref{T2_unif}, with a compact set
  $K\Subset\R\setminus\{\wt x\}$ containing $x$.  This yields
  \begin{align}\label{T2_sgn}
    \begin{aligned}
      \lim_{\eps\downarrow0}
        \frac1\be T_{2,\eps}(x,\wt x)
      =\frac12\EE\bigg[
        \int_{\DN^2}
        \sum_{i=0}^{n-1}
          \mathfrak D_i^{\wt x,x}(\eta,\ga)
        Q_{\wt x}^{\be}(d\eta)
        Q_x^{\be}(d\ga)
      \bigg].
    \end{aligned}
  \end{align}
  The remaining step is pathwise.  By the quenched switching identity
  \eqref{quen_b} in
  Lemma~\ref{quen_lem}, proved in the next subsection,
  the inner double integral in \eqref{T2_sgn} equals
  $ \sgn(\wt x-x)
    \bigl(Q_{\wt x}^{\be}\otimes Q_x^{\be}\bigr)
    \{\tau\leq n-1\}
$
for almost every environment.  Taking expectation and using the
  definition of $\rmP_{\wt x,x}^{\be}$ in
  \eqref{not_exp} completes the proof.
\end{proof}

\subsection{A quenched switching identity}

The following pathwise identity isolates the symmetry that was used at the end of the previous section. In addition, it will also be
used in the proofs of Proposition~\ref{bdry} and the stationary
Stein identity.  

\begin{lemma}[\textsf{Quenched switching at the first meeting}]
  \label{quen_lem}
  Fix $\al\in\R$, $\be>0$, and $x,y\in\R$ with $x\neq y$.  Almost
  surely with respect to the Brownian environment,
  \begin{align}\label{quen_b}
      \int_{\DN^2}    \sum_{i=0}^{n-1}    \mathfrak D_i^{y,x}(\eta,\ga)\,   Q_y^{\al,\be}(d\eta)Q_x^{\al,\be}(d\ga)
  =   \sgn(y-x)   \bigl(Q_y^{\al,\be}\otimes Q_x^{\al,\be}\bigr)   \{\tau\leq n-1\},
  \end{align}
  and
  \begin{align}\label{quen_t}
      \int_{\DN^2}  \sgn\bigl(P_n(y,\eta)-P_n(x,\ga)\bigr)\,  Q_y^{\al,\be}(d\eta)Q_x^{\al,\be}(d\ga)
=   \sgn(y-x)        \bigl(Q_y^{\al,\be}\otimes Q_x^{\al,\be}\bigr)   \{\tau=\infty\}.
  \end{align}
  Consequently,
  \begin{align}\label{quen_c}
      \int_{\DN^2}
      \left[
        \sum_{i=0}^{n-1}\mathfrak D_i^{y,x}(\eta,\ga)
        +\sgn\bigl(P_n(y,\eta)-P_n(x,\ga)\bigr)
      \right]
      Q_y^{\al,\be}(d\eta)Q_x^{\al,\be}(d\ga) =\sgn(y-x).
   \end{align}
\end{lemma}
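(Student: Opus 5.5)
The proof is pathwise: fix a realization of the environment and work under the product measure $Q_y^{\al,\be}\otimes Q_x^{\al,\be}$. By the reflection $z\mapsto -z$, or by repeating the argument with the roles of the two paths exchanged, it suffices to treat $y<x$, so that $\sgn(y-x)=-1$. Since \eqref{quen_c} is the sum of \eqref{quen_b} and \eqref{quen_t}, only these two need proof. I split the two-replica space along the first-meeting events $\fB_k=\{\tau=k\}$, $0\le k\le n-1$, and $\fB_\infty$. Coincidences among the points $P_i(y,\eta)$ and $P_j(x,\ga)$ form a null set under $\mu_\be\otimes\mu_\be$, hence under the quenched product, so all sign functions may be evaluated away from ties.

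\emph{The event $\fB_\infty$.} If $\tau=\infty$ then for every $i\le n-1$ we have $y+\eta_{i+1}<x+\ga_i$. Thus on each level $i$ the segment of the $y$-path lies strictly to the left of the segment of the $x$-path. The definition \eqref{not_path} then gives $\mathfrak D_i^{y,x}(\eta,\ga)=0$, since all four signs equal $-1$. Moreover $P_n(y,\eta)<P_n(x,\ga)$, so $\sgn(P_n(y,\eta)-P_n(x,\ga))=-1=\sgn(y-x)$. Hence $\fB_\infty$ contributes nothing to the left side of \eqref{quen_b}. It contributes exactly $\sgn(y-x)(Q_y\otimes Q_x)(\fB_\infty)$ to the left side of \eqref{quen_t}. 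The remaining claims are therefore:
\begin{enumerate}[(a)]
\item on each $\fB_k$ with $k\le n-1$, the integral of $\sgn(P_n(y,\eta)-P_n(x,\ga))$ vanishes;
\item on each such $\fB_k$, the integral of $\sum_i\mathfrak D_i^{y,x}$ equals $-(Q_y\otimes Q_x)(\fB_k)$.
\end{enumerate}

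\emph{Switching on $\fB_k$.} On $\fB_k$ the paths are disjoint below level $k$, so $\mathfrak D_i^{y,x}=0$ for $i<k$, exactly as above. On level $k$ the $y$-segment starts to the left of $P_k(x,\ga)$ and passes it. Let $m=P_k(x,\ga)$. Both paths then traverse the point $(m,k)$ horizontally. The key structural input is the Markov (Gibbs) property of the quenched measure combined with the memoryless property of the exponential gaps in $\mu_\be$. Conditionally on the portion of a path up to the point $(m,k)$, the remaining path is distributed as a polymer started from $(m,k)$ with density $\propto e^{\be H}$ on the remaining levels and Poisson jump law. This conditional law depends neither on the past nor on the starting point. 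Consequently the map $\Theta$ that exchanges the two continuations after $(m,k)$ preserves $Q_y\otimes Q_x$ restricted to $\fB_k$. It maps $\fB_k$ to itself, because the pre-meeting pieces and the crossing at $(m,k)$ are unchanged. Since $\Theta$ swaps the two terminal points, the integrand $\sgn(P_n(y,\eta)-P_n(x,\ga))$ is odd under $\Theta$, which gives (a).

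\emph{The bulk sum.} For (b) I will compute $\sum_{i\ge k}\mathfrak D_i^{y,x}$ after averaging over $\Theta$. Expanding \eqref{not_path} and telescoping over $i\ge k$, the sum equals $\sgn(P_n(y,\eta)-P_n(x,\ga))-\sgn(P_k(y,\eta)-P_k(x,\ga))$ plus cross terms of the form $\sgn(P_i(y,\eta)-P_i(x,\ga))+\sgn(P_{i+1}(y,\eta)-P_{i+1}(x,\ga))-\sgn(P_i(y,\eta)-P_{i+1}(x,\ga))-\sgn(P_{i+1}(y,\eta)-P_i(x,\ga))$. The $P_n$ term averages to zero by (a). The term $-\sgn(P_k(y,\eta)-P_k(x,\ga))$ equals $+1$ on $\fB_k$. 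I then expect the $\Theta$-symmetrization of the cross terms to produce exactly $-2$ on $\fB_k$, for a net contribution of $-1$ per unit mass of $\fB_k$. The reason is that for $i>k$ the cross terms are odd under $\Theta$, since $\Theta$ interchanges the roles of the post-meeting coordinates, while on level $k$ the paired configurations $\omega$ and $\Theta\omega$ have, in total, one left-to-right interlacing, worth $-2$. Summing over $k$ then yields $\sgn(y-x)(Q_y\otimes Q_x)\{\tau\le n-1\}$, which is \eqref{quen_b}.

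\emph{Main obstacle.} The delicate step is this last level-$k$ bookkeeping. After the swap, the level-$k$ segments become $[P_k(y,\eta),\,P_{k+1}(x,\ga)]$ and $[m,\,P_{k+1}(y,\eta)]$, and these may or may not interlace. Checking that the pair $\{\omega,\Theta\omega\}$ always contributes $-2$ in total, and that the cross terms on levels $i>k$ cancel exactly in pairs, requires a careful case analysis of the orderings of the endpoints. I would first try to organize this by writing $\mathfrak D_i^{y,x}$ in terms of the signed count $-2\,\ind\{\text{left-to-right interlace}\}+2\,\ind\{\text{right-to-left interlace}\}$ and tracking directly how $\Theta$ permutes interlacing types level by level.
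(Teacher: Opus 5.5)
Your architecture is the paper's: you decompose along $\fB_0,\dots,\fB_{n-1},\fB_\infty$, note that $\mathfrak D_i^{y,x}$ vanishes below the meeting level, and use exchangeability of the two continuations from $(P_k(x,\ga),k)$. Your involution $\Theta$ repackages the paper's memoryless shift plus Markov restart. Let $s_k=P_k(x,\ga)-P_k(y,\eta)$, let $X_{k+1},Y_{k+1}$ be the $(k+1)$st gaps of $\eta,\ga$, and hold the first $k$ gaps fixed. Then the map $(X_{k+1},Y_{k+1})\mapsto(s_k+Y_{k+1},\,X_{k+1}-s_k)$, with all later gaps exchanged, preserves $\ind\{X_{k+1}>s_k\}\,\mu_\be\otimes\mu_\be$ and the sum $H_y^\al(\eta)+H_x^\al(\ga)$. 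It also swaps $P_j(y,\eta)\leftrightarrow P_j(x,\ga)$ for every $j\ge k+1$. Your treatment of $\fB_\infty$ and of (a) is correct.

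Step (b), however, is wrong as written. The ``cross terms'' you display are exactly $\mathfrak D_i^{y,x}(\eta,\ga)$. Your decomposition would therefore force $\sgn(P_n(y,\eta)-P_n(x,\ga))=\sgn(P_k(y,\eta)-P_k(x,\ga))$, which is false in general. Your count also mixes two bookkeepings. The fact you invoke at level $k$ (the pair $\{\omega,\Theta\omega\}$ carries one left-to-right interlacing) says that $\mathfrak D_k^{y,x}$ has pair \emph{total} $-2$, i.e.\ $\Theta$-\emph{average} $-1$. Inserting that into your tally gives $0+1-1=0$, not $-1$. The numbers $0,+1,-2$ are right only for a correctly telescoped remainder. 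Write $a_j=P_j(y,\eta)$, $b_j=P_j(x,\ga)$, and $\mathfrak D_i^{y,x}=[\sgn(a_{i+1}-b_{i+1})-\sgn(a_i-b_i)]+c_i$ with $c_i=2\sgn(a_i-b_i)-\sgn(a_i-b_{i+1})-\sgn(a_{i+1}-b_i)$. Then $c_k=-2$ identically on $\fB_k$, with no symmetrization needed, and $c_i$ is $\Theta$-odd for $i>k$.

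The ``main obstacle'' disappears if you skip the telescoping, as the paper does. On $\fB_k$ one has $a_k<b_k<a_{k+1}$ and $b_k<b_{k+1}$, and these persist under $\Theta$. So three of the four signs in $\mathfrak D_k^{y,x}$ are forced, giving $\mathfrak D_k^{y,x}=\sgn(a_{k+1}-b_{k+1})-1$. The remaining sign is $\Theta$-odd, since $\Theta$ swaps $a_{k+1}\leftrightarrow b_{k+1}$ and fixes $a_k,b_k$. For $i>k$, $\mathfrak D_i^{y,x}$ is $\Theta$-odd because it involves only post-meeting coordinates. Hence $\sum_i\mathfrak D_i^{y,x}$ integrates over $\fB_k$ to $-(Q_y^{\al,\be}\otimes Q_x^{\al,\be})(\fB_k)$, with no case analysis.

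Finally, drop the reflection $z\mapsto-z$, which does not preserve up-right paths. The reduction to $y<x$ works by exchanging the replicas, using $\mathfrak D_i^{y,x}(\eta,\ga)=-\mathfrak D_i^{x,y}(\ga,\eta)$.
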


\begin{proof}
  The argument is quenched.  Fix a realization of the Brownian
  environment for which all the Brownian paths are continuous.  We
  first treat the case $y<x$.  The case $y>x$ will follow by
  interchanging the two paths.

  For this proof, it is convenient to use the normalization
    $\cZ_z^{\al,\be}
    =
    \int_{\DN}
      e^{\be H_z^\al(\ga)}\,\mu_\be(d\ga)
    =\be^n Z_z^{\al,\be}.$
  Then
  \begin{align*}
    Q_z^{\al,\be}(d\ga)
    =
    \frac{e^{\be H_z^\al(\ga)}}{\cZ_z^{\al,\be}}
    \,\mu_\be(d\ga).
  \end{align*}
  In the remainder of the proof, we suppress $\al,\be$ from
  $\cZ_z^{\al,\be}$ and write simply $\cZ_z$.

  Let $\{X_j\}_{j\geq1}$ and $\{Y_j\}_{j\geq1}$ be two independent
  sequences of i.i.d.\ $\Exp(\be)$ random variables.  We realize the
  two reference paths as
  \begin{align*}
    \eta_0=\ga_0=0,
    \qquad
    \eta_m=\sum_{j=1}^m X_j,
    \qquad
    \ga_m=\sum_{j=1}^m Y_j,
    \qquad m\geq1.
  \end{align*}
  Set
  \begin{align}
    \cF_k
    =
    \sigma\bigl(
      X_1,\ldots,X_k,Y_1,\ldots,Y_k
    \bigr),
    \qquad 0\leq k\leq n.
  \end{align}

  Recall that, for $0\leq k\leq n-1$,
  \begin{align}\label{sw_Bk}
    \begin{aligned}
      \fB_k
      &=
      \bigcap_{r=1}^{k}
      \{P_r(y,\eta)<P_{r-1}(x,\ga)\}
      \cap
      \{P_{k+1}(y,\eta)>P_k(x,\ga)\},
      \\
      \fB_\infty
      &=
      \bigcap_{r=1}^{n}
      \{P_r(y,\eta)<P_{r-1}(x,\ga)\}.
    \end{aligned}
  \end{align}
  Since $Q_y^{\al,\be}\otimes Q_x^{\al,\be}$ is absolutely
  continuous with respect to
  $\mu_\be\otimes\mu_\be$, equality events have quenched probability
  zero.  Thus, up to a quenched null set, it holds that 
 $\fB_k=\{\tau=k\}$,
    $\fB_\infty=\{\tau=\infty\}$.
  Recall the notation $\mathbb{E}^{\textup{poly}\times 2}$ from \eqref{not_exp}, we start from
  \begin{align}\label{sw_start}
  \begin{aligned}
    \EE_{(y;\eta),(x;\ga)}^{\mathrm{poly}\times2}
    \left[
      \sum_{i=0}^{n-1}
      \mathfrak D_i^{y,x}(\eta,\ga)
    \right]
    &=
    \sum_{k=0}^{n-1}\sum_{i=0}^{n-1}
    \EE_{\eta,\ga}^{\mu_\be\times2}
    \left[
      \ind_{\fB_k}
      \frac{e^{\be H_y^\al(\eta)}}{\cZ_y}
      \frac{e^{\be H_x^\al(\ga)}}{\cZ_x}
      \mathfrak D_i^{y,x}(\eta,\ga)
    \right]
     \\
    &\qquad+
    \sum_{i=0}^{n-1}
    \EE_{\eta,\ga}^{\mu_\be\times2}
    \left[
      \ind_{\fB_\infty}
      \frac{e^{\be H_y^\al(\eta)}}{\cZ_y}
      \frac{e^{\be H_x^\al(\ga)}}{\cZ_x}
      \mathfrak D_i^{y,x}(\eta,\ga)
    \right].
  \end{aligned} \end{align}

  If the two paths first meet at level $k$, then, for every $i<k$,
  \begin{align*}
    P_i(y,\eta)
    <
    P_{i+1}(y,\eta)
    <
    P_i(x,\ga)
    <
    P_{i+1}(x,\ga).
  \end{align*}
  Hence all four sign functions in
  $\mathfrak D_i^{y,x}(\eta,\ga)$ are equal to $-1$, and therefore
  $\ind_{\fB_k}\mathfrak D_i^{y,x}(\eta,\ga)=0$,
    for $i<k$.
  The same argument shows that 
    $\ind_{\fB_\infty}
    \mathfrak D_i^{y,x}(\eta,\ga)=0$,
   for $0\leq i\leq n-1$.
  Consequently, \eqref{sw_start} reduces to
  \begin{align}\label{sw_red}
    &\EE_{(y;\eta),(x;\ga)}^{\mathrm{poly}\times2}
    \left[
      \sum_{i=0}^{n-1}
      \mathfrak D_i^{y,x}(\eta,\ga)
    \right]
   =
    \sum_{k=0}^{n-1}\sum_{i=k}^{n-1}
    \EE_{\eta,\ga}^{\mu_\be\times2}
    \left[
      \ind_{\fB_k}
      \frac{e^{\be H_y^\al(\eta)}}{\cZ_y}
      \frac{e^{\be H_x^\al(\ga)}}{\cZ_x}
      \mathfrak D_i^{y,x}(\eta,\ga)
    \right].
  \end{align}

  We now fix $k\in\{0,\ldots,n-1\}$ and compute the corresponding
  terms exactly.  Since
  $P_k(y,\eta)<P_k(x,\ga)$
  on the pre-meeting part of $\fB_k$, set
  \begin{align}
    s_k
    =
    P_k(x,\ga)-P_k(y,\eta)
    =
    x+\ga_k-y-\eta_k>0.
  \end{align}
   
  \noi
\begin{figure}[t]
  \centering
  \begin{tikzpicture}[x=.92cm,y=.76cm,>=Stealth,font=\small]

    \draw[axes,->] (0,0) -- (0,5.65) node[above left] {level};
    \draw[axes,->] (0,0) -- (11.7,0)
      node[below right] {horizontal coordinate};
    \foreach \yy/\lab in {1/1,2/2,4/k,5/{k+1}}{
      \draw[lev] (0,\yy) -- (9.1,\yy);
      \node[left] at (0,\yy) {$\lab$};
    }
    \node[left] at (0,0) {$0$};
    \node[left] at (0,3) {$\vdots$};

    \draw[redpath]
      (.75,0) -- (1.35,0) -- (1.35,1)
      -- (1.95,1) -- (1.95,2);

    \draw[redpath,densely dotted]
      (1.95,2) -- (2.30,2) -- (2.30,3)
      -- (2.65,3) -- (2.65,4);

    \draw[redpath]
      (2.65,4) -- (7.15,4) -- (7.15,5);

    \draw[bluepath]
      (4.55,0) -- (5.05,0) -- (5.05,1)
      -- (5.45,1) -- (5.45,2);

    \draw[bluepath,densely dotted]
      (5.45,2) -- (5.75,2) -- (5.75,3)
      -- (6.15,3) -- (6.15,4);

    \draw[bluepath]
      (6.15,4) -- (8.45,4) -- (8.45,5);


    \fill[red] (.75,0) circle (1.6pt);
    \fill[red] (1.35,1) circle (1.6pt);
    \fill[red] (1.95,2) circle (1.6pt);
    \fill[red] (2.30,3) circle (1.6pt);
    \fill[red] (2.65,4) circle (1.6pt);
    \fill[red] (7.15,5) circle (1.6pt);

    \fill[blue] (4.55,0) circle (1.6pt);
    \fill[blue] (5.05,1) circle (1.6pt);
    \fill[blue] (5.45,2) circle (1.6pt);
    \fill[blue] (5.75,3) circle (1.6pt);
    \fill[blue] (6.15,4) circle (1.6pt);
    \fill[blue] (8.45,5) circle (1.6pt);


    \draw[red,fill=white,line width=.9pt] (1.35,0) circle (1.9pt);
    \draw[red,fill=white,line width=.9pt] (1.95,1) circle (1.9pt);
    \draw[red,fill=white,line width=.9pt] (2.30,2) circle (1.9pt);
    \draw[red,fill=white,line width=.9pt] (2.65,3) circle (1.9pt);
    \draw[red,fill=white,line width=.9pt] (7.15,4) circle (1.9pt);

    \draw[blue,fill=white,line width=.9pt] (5.05,0) circle (1.9pt);
    \draw[blue,fill=white,line width=.9pt] (5.45,1) circle (1.9pt);
    \draw[blue,fill=white,line width=.9pt] (5.75,2) circle (1.9pt);
    \draw[blue,fill=white,line width=.9pt] (6.15,3) circle (1.9pt);
    \draw[blue,fill=white,line width=.9pt] (8.45,4) circle (1.9pt);

    \node[labr,anchor=south east] at (2.58,4.08) {$y+\eta_k$};
    \node[labb,anchor=north west] at (6.28,3.92) {$q=x+\ga_k$};
    \node[labr,anchor=south east] at (7.08,5.07)
      {$y+\eta_{k+1}$};
    \node[labb,anchor=south west] at (8.52,5.07)
      {$x+\ga_{k+1}$};
    \node[labr,fill=white,inner sep=1pt] at (4.55,4.42)
      {$X_{k+1}=\eta_{k+1}-\eta_k$};

    \draw[<->] (2.65,3.52) -- (6.15,3.52);
    \node[fill=white,inner sep=1pt] at (4.40,3.25)
      {$s_k$};

    \node[align=center,text width=3.6cm] at (10.00,3.10)
      {$\displaystyle X_{k+1}>s_k$
       \\[2pt] $\Downarrow$
       \\[2pt] $\displaystyle y+\eta_{k+1}>x+\ga_k=q$
       \\[3pt] first meeting at level $k$};

    \node[labr,below] at (.75,0) {$y$};
    \node[labb,below] at (4.55,0) {$x$};
    \node at (2.70,-.48) {$y<x$};

  \end{tikzpicture}
  \caption{If $k$ is the first meeting level, then
  $s_k=x+\ga_k-y-\eta_k>0$. The condition $X_{k+1}>s_k$ is equivalent to
  $y+\eta_{k+1}>x+\ga_k=q$, which yields first meeting at level $k$.}
  \label{fig_skXk}
\end{figure}

\noi
  Then, as illustrated in Figure 
  \ref{fig_skXk},
  we have
  \begin{align*}
    \fB_k
    =
    \bigcap_{r=1}^{k}
    \{P_r(y,\eta)<P_{r-1}(x,\ga)\}
    \cap
    \{X_{k+1}>s_k\},
  \end{align*}
  up to a null set.  Notice that $s_k$ is $\cF_k$-measurable,
  whereas $X_{k+1}$ is independent of $\cF_k$.
  
  We split the two Hamiltonians into two parts:
  \begin{align}
    H_y^\al(\eta)
    =
    V_1+V_2
    \AND
    H_x^\al(\ga)
    =
    W_1+W_2,
  \end{align}
  where
  \begin{align*}
    V_1
    &=
    \sum_{j=0}^{k-1}
    \bigl[
      B_j(P_{j+1}(y,\eta))
      -B_j(P_j(y,\eta))
    \bigr]
   +
    B_k(P_k(x,\ga))-B_k(P_k(y,\eta)),
    \\
    V_2
    &=
    \al B_n(P_n(y,\eta))
    +
    B_k(P_{k+1}(y,\eta))-B_k(P_k(x,\ga))\\
   & \qquad \qquad +
    \sum_{j=k+1}^{n-1}
    \bigl[
      B_j(P_{j+1}(y,\eta))
      -B_j(P_j(y,\eta))
    \bigr],
  \end{align*}
  and
  \begin{align*}
    W_1
    &=
    \sum_{j=0}^{k-1}
    \bigl[
      B_j(P_{j+1}(x,\ga))
      -B_j(P_j(x,\ga))
    \bigr],
    \\
    W_2
    &=
    \al B_n(P_n(x,\ga))
    +
    \sum_{j=k}^{n-1}
    \bigl[
      B_j(P_{j+1}(x,\ga))
      -B_j(P_j(x,\ga))
    \bigr].
  \end{align*}
  Recall the notation $\mathbb{E}^{\mu_\beta \times 2}$ from \eqref{not_exp}, and note that $V_1$ and $W_1$ are $\cF_k$-measurable.
  Hence, for $i\geq k$,
  \begin{align}\label{sw_cond}
    &\EE_{\eta,\ga}^{\mu_\be\times2}
    \left[
      \ind_{\fB_k}
      \frac{e^{\be H_y^\al(\eta)}}{\cZ_y}
      \frac{e^{\be H_x^\al(\ga)}}{\cZ_x}
      \mathfrak D_i^{y,x}(\eta,\ga)
    \right]
    \notag\\
    &=
    \EE_{\eta,\ga}^{\mu_\be\times2}
    \bigg[
      \ind_{
        \bigcap_{r=1}^{k}
        \{P_r(y,\eta)<P_{r-1}(x,\ga)\}
      }
      \frac{e^{\be V_1}}{\cZ_y}
      \frac{e^{\be W_1}}{\cZ_x}
      \EE_{\eta,\ga}^{\mu_\be\times2}
      \left[
        \ind_{\{X_{k+1}>s_k\}}
        e^{\be V_2}e^{\be W_2}
        \mathfrak D_i^{y,x}(\eta,\ga)
        \,\middle|\,\cF_k
      \right]
    \bigg].
  \end{align}

 \noi
 Let us recall the {\it memoryless property} of the exponential distribution.
  If $X\sim\Exp(\be)$, then, for $s>0$,
  \begin{align*}
    \EE\bigl[f(X)\ind_{\{X>s\}}\bigr]
    =
    \int_s^\infty
      f(t)\be e^{-\be t}\,dt
    &=
    e^{-\be s}
    \int_0^\infty
      f(s+r)\be e^{-\be r}\,dr
    \\
    &=
    e^{-\be s}\EE[f(s+X)].
  \end{align*}
  More generally, if $U_m=X_1+\cdots+X_m$ is an $\Exp(\be)$
  random walk, then
  by independence between $X_1$ and other steps,
  we have
  \begin{align}\label{sw_mem}
    &\EE\left[
      f(U_1,\ldots,U_\ell)
      \ind_{\{X_1>s\}}
    \right]
   =
    e^{-\be s}
    \EE\left[
      f(s+U_1,\ldots,s+U_\ell)
    \right].
  \end{align}

  We first consider $i=k+1,\ldots,n-1$.  Applying
  \eqref{sw_mem} to $X_{k+1}$
  with
  $s=s_k$, gives
  \begin{align} 
  \label{sw_post}
    \begin{aligned} 
    &\EE_{\eta,\ga}^{\mu_\be\times2}
    \left[
      \ind_{\{X_{k+1}>s_k\}}
      e^{\be V_2}e^{\be W_2}
      \mathfrak D_i^{y,x}(\eta,\ga)
      \,\middle|\,\cF_k
    \right]
   \\
    &=
    e^{-\be s_k}
    \EE_{\eta,\ga}^{\mu_\be\times2}
    \bigg[
      \exp\bigg\{
        \be\bigg(
          \al B_n(x+\ga_k-\eta_k+\eta_n)
          +B_k(x+\ga_k-\eta_k+\eta_{k+1})
          -B_k(x+\ga_k)
    \\
    &\hspace{3.6cm}
          +\sum_{j=k+1}^{n-1}
          \bigl[
            B_j(x+\ga_k-\eta_k+\eta_{j+1})
            -B_j(x+\ga_k-\eta_k+\eta_j)
          \bigr]
        \bigg)
      \bigg\}
     \\
    & \times
      \exp\bigg\{
        \be\bigg(
          \al B_n(x+\ga_n)
          +
          \sum_{j=k}^{n-1}
          \bigl[
            B_j(x+\ga_{j+1})
            -B_j(x+\ga_j)
          \bigr]
        \bigg)
      \bigg\} 
      \mathfrak{D}_i(\ga_k-\eta_k+\eta_i,\ga_i)
      \,\Bigm|\,\cF_k
    \bigg].
  \end{aligned}
    \end{align}
  Here we have used the 
  notation \eqref{not_pathb}
 for $\mathfrak{D}_i$
 and 
 \begin{align*}
 \mathfrak{D}_i(y+\eta_i + s_k, x+\ga_i) 
 =\mathfrak{D}_i( x+\ga_k-\eta_k+\eta_i, x+\ga_i)
 = \mathfrak{D}_i(\ga_k-\eta_k+\eta_i,\ga_i).
 \end{align*}

  Now use the (time homogeneous) Markov property of the two random walks to get rid of the conditional expectation on $\mathcal{F}_k$.
  Define, for $\ell\geq0$,
  \begin{align}\label{sw_prime}
    \eta'_\ell
    =
    \eta_{k+\ell}-\eta_k
    \AND
    \ga'_\ell
    =
    \ga_{k+\ell}-\ga_k,
  \end{align}
  so that $\eta'_0=\ga'_0=0$, and put
  \begin{align*}
    q=x+\ga_k,
    \qquad
    m=i-k.
  \end{align*}
  Then, we have 
  \begin{align*}
 &\mathfrak{D}_i(\ga_k-\eta_k+\eta_i,\ga_i)
 = \mathfrak{D}_i(\eta_i -\eta_k,\ga_i - \ga_k) \\
 &\quad = \sgn(\eta'_{m+1}-\ga'_{m+1})
        -\sgn(\eta'_m-\ga'_{m+1}) 
        -\sgn(\eta'_{m+1}-\ga'_m)
        +\sgn(\eta'_m-\ga'_m).
  \end{align*}
Let $\EE_{\eta',\ga'}^{\mu_\be\times2}$ denote the corresponding
  product reference expectation for the remaining $n-k$ exponential
  increments, then 
  the conditional expectation in
  \eqref{sw_post} becomes
  \begin{align}\label{sw_postp}
    &e^{-\be s_k}
    \EE_{\eta',\ga'}^{\mu_\be\times2}
    \bigg[
      \exp\bigg\{
        \be\bigg(
          \al B_n(q+\eta'_{n-k})
          +
          \sum_{\ell=0}^{n-k-1}
          \bigl[
            B_{k+\ell}(q+\eta'_{\ell+1})
            -B_{k+\ell}(q+\eta'_\ell)
          \bigr]
        \bigg)
      \bigg\}
    \notag\\
    &\hspace{1.1cm}\times
      \exp\bigg\{
        \be\bigg(
          \al B_n(q+\ga'_{n-k}) 
          +
          \sum_{\ell=0}^{n-k-1}
          \bigl[
            B_{k+\ell}(q+\ga'_{\ell+1})
            -B_{k+\ell}(q+\ga'_\ell)
          \bigr]
        \bigg)
      \bigg\}
    \notag\\
    &\hspace{1.1cm}\times
      \Big[
        \sgn(\eta'_{m+1}-\ga'_{m+1})
        -\sgn(\eta'_m-\ga'_{m+1}) 
        -\sgn(\eta'_{m+1}-\ga'_m)
        +\sgn(\eta'_m-\ga'_m)
      \Big]
    \bigg].
  \end{align}

  The product of the first two exponential factors in
  \eqref{sw_postp} is symmetric under
  $\eta'\leftrightarrow\ga'$,
  whereas the four-sign factor changes sign.
    Hence,  the expectation in
  \eqref{sw_postp} is zero.  Substituting this  into
  \eqref{sw_cond} yields
  \begin{align}\label{sw_post0}
    &\EE_{\eta,\ga}^{\mu_\be\times2}
    \left[
      \ind_{\fB_k}
      \frac{e^{\be H_y^\al(\eta)}}{\cZ_y}
      \frac{e^{\be H_x^\al(\ga)}}{\cZ_x}
      \mathfrak D_i^{y,x}(\eta,\ga)
    \right]
    =0,
    \qquad
    i=k+1,\ldots,n-1.
  \end{align}

  It remains to consider the term $i=k$.  On $\fB_k$,  we have
   $ P_k(y,\eta)
    <
    P_k(x,\ga)
    <
    P_{k+1}(y,\eta)$,
  and consequently by definition \eqref{not_path},
  we have
    $\mathfrak D_k^{y,x}(\eta,\ga)
    =
    \sgn\bigl(
      P_{k+1}(y,\eta)-P_{k+1}(x,\ga)
    \bigr)-1.$

  Applying the same memoryless calculation and Markov property as above gives
  \begin{align}\label{sw_kmem}
    &\EE_{\eta,\ga}^{\mu_\be\times2}
    \left[
      \ind_{\{X_{k+1}>s_k\}}
      e^{\be V_2}e^{\be W_2}
      \mathfrak D_k^{y,x}(\eta,\ga)
      \,\middle|\,\cF_k
    \right]
    \notag\\
    &=
    e^{-\be s_k}
    \EE_{\eta',\ga'}^{\mu_\be\times2}
    \bigg[
      \exp\bigg\{
        \be\bigg(
          \al B_n(q+\eta'_{n-k})
          +
          \sum_{\ell=0}^{n-k-1}
          \bigl[
            B_{k+\ell}(q+\eta'_{\ell+1})
            -B_{k+\ell}(q+\eta'_\ell)
          \bigr]
        \bigg)
      \bigg\}
    \notag\\
    &\hspace{1cm}\times
      \exp\bigg\{
        \be\bigg(
          \al B_n(q+\ga'_{n-k})
          +
          \sum_{\ell=0}^{n-k-1}
          \bigl[
            B_{k+\ell}(q+\ga'_{\ell+1})
            -B_{k+\ell}(q+\ga'_\ell)
          \bigr]
        \bigg)
      \bigg\}
    \notag\\
    &\hspace{1cm}\times
      \bigl[
        \sgn(\eta'_1-\ga'_1)-1
      \bigr]
    \bigg].
  \end{align}
  Again the product Gibbs weight is symmetric under
  $\eta'\leftrightarrow\ga'$.  Hence
  \begin{align*}
    &\EE_{\eta',\ga'}^{\mu_\be\times2}
    \bigg[
      e^{\be(\cdots\eta')}
      e^{\be(\cdots\ga')}
      \sgn(\eta'_1-\ga'_1)
    \bigg]
    =0.
  \end{align*}
  Therefore \eqref{sw_kmem} equals
  \begin{align}\label{sw_kminus}
    &-
    e^{-\be s_k}
    \EE_{\eta',\ga'}^{\mu_\be\times2}
    \bigg[
      \exp\bigg\{
        \be\bigg(
          \al B_n(q+\eta'_{n-k})
          +
          \sum_{\ell=0}^{n-k-1}
          \bigl[
            B_{k+\ell}(q+\eta'_{\ell+1})
            -B_{k+\ell}(q+\eta'_\ell)
          \bigr]
        \bigg)
      \bigg\}
    \notag\\
    &\hspace{2cm}\times
      \exp\bigg\{
        \be\bigg(
          \al B_n(q+\ga'_{n-k})
          +
          \sum_{\ell=0}^{n-k-1}
          \bigl[
            B_{k+\ell}(q+\ga'_{\ell+1})
            -B_{k+\ell}(q+\ga'_\ell)
          \bigr]
        \bigg)
      \bigg\}
    \bigg] \\
   &=-
    \EE_{\eta,\ga}^{\mu_\be\times2}
    \left[
      \ind_{\{X_{k+1}>s_k\}}
      e^{\be V_2}e^{\be W_2}
      \,\middle|\,\cF_k
    \right],  \notag
  \end{align}

\noi
where the last equality is obtained by 
reversing the above process, 
with the application of the Markov property which now reintroduces the conditional expectation on $\mathcal{F}_k$, then followed by the memoryless property \eqref{sw_mem}.
%
  Substituting this into \eqref{sw_cond} gives
  \begin{align}\label{sw_kfin}
  \begin{aligned}
    \EE_{\eta,\ga}^{\mu_\be\times2}
    \left[
      \ind_{\fB_k}
      \frac{e^{\be H_y^\al(\eta)}}{\cZ_y}
      \frac{e^{\be H_x^\al(\ga)}}{\cZ_x}
      \mathfrak D_k^{y,x}(\eta,\ga)
    \right] 
    &=
    -
    \EE_{\eta,\ga}^{\mu_\be\times2}
    \left[
      \ind_{\fB_k}
      \frac{e^{\be H_y^\al(\eta)}}{\cZ_y}
      \frac{e^{\be H_x^\al(\ga)}}{\cZ_x}
    \right] \\
    &=
    -
    \bigl(
      Q_y^{\al,\be}\otimes Q_x^{\al,\be}
    \bigr)(\fB_k).
  \end{aligned}
  \end{align}
  Plugging \eqref{sw_post0} and
  \eqref{sw_kfin} into \eqref{sw_red} and summing over $k$, we obtain
  \begin{align*}
    &\EE_{(y;\eta),(x;\ga)}^{\mathrm{poly}\times2}
    \left[
      \sum_{i=0}^{n-1}
      \mathfrak D_i^{y,x}(\eta,\ga)
    \right]
    =
    -
    \sum_{k=0}^{n-1}
    \bigl(
      Q_y^{\al,\be}\otimes Q_x^{\al,\be}
    \bigr)(\fB_k)
  =
    -
    \bigl(
      Q_y^{\al,\be}\otimes Q_x^{\al,\be}
    \bigr)\{\tau\leq n-1\}.
  \end{align*}
  Since $y<x$, $\sgn(y-x)=-1$, and this proves
  \eqref{quen_b}.

  We next prove the terminal identity.  On $\fB_k$, 
  we can apply exactly the
  same conditional computation, but replace the four-sign factor in
  \eqref{sw_postp} by
   $ \sgn\bigl( P_n(y,\eta)-P_n(x,\ga)  \bigr).$
  After the same process, which uses the memoryless shift and the Markov property, this sign becomes
   $ \sgn(\eta'_{n-k}-\ga'_{n-k}).$
  More precisely,
  \begin{align*}
    &\EE_{\eta,\ga}^{\mu_\be\times2}
    \left[
      \ind_{\{X_{k+1}>s_k\}}
      e^{\be V_2}e^{\be W_2}
      \sgn\bigl(
        P_n(y,\eta)-P_n(x,\ga)
      \bigr)
      \,\middle|\,\cF_k
    \right]
    \\
    &\quad=
    e^{-\be s_k}
    \EE_{\eta',\ga'}^{\mu_\be\times2}
    \bigg[
      \exp\bigg\{
        \be\bigg(
          \al B_n(q+\eta'_{n-k})
          +
          \sum_{\ell=0}^{n-k-1}
          \bigl[
            B_{k+\ell}(q+\eta'_{\ell+1})
            -B_{k+\ell}(q+\eta'_\ell)
          \bigr]
        \bigg)
      \bigg\}
    \\
    &\hspace{1cm}\times
      \exp\bigg\{
        \be\bigg(
          \al B_n(q+\ga'_{n-k})
          +
          \sum_{\ell=0}^{n-k-1}
          \bigl[
            B_{k+\ell}(q+\ga'_{\ell+1})
            -B_{k+\ell}(q+\ga'_\ell)
          \bigr]
        \bigg)
      \bigg\}
      \sgn(\eta'_{n-k}-\ga'_{n-k})
    \bigg].
  \end{align*}
  The integrand apart from the sign is invariant under
  $\eta'\leftrightarrow\ga'$, while the sign changes sign.
  Consequently,
  \begin{align}
    \EE_{(y;\eta),(x;\ga)}^{\mathrm{poly}\times2}
    \Big[
      \ind_{\fB_k}
      \sgn\bigl(
        P_n(y,\eta)-P_n(x,\ga)
      \bigr)
    \Big]
    =0.
  \end{align}

  On $\fB_\infty$, we have
  $
  P_n(y,\eta)
    <
    P_{n-1}(x,\ga)
    <
    P_n(x,\ga)$,
  so that
$ \sgn\bigl(
      P_n(y,\eta)-P_n(x,\ga)
    \bigr)
    =-1.$
  Hence, using the decomposition into
  $\fB_0,\ldots,\fB_{n-1},\fB_\infty$,
  \begin{align*}
    &\EE_{(y;\eta),(x;\ga)}^{\mathrm{poly}\times2}
    \left[
      \sgn\bigl(
        P_n(y,\eta)-P_n(x,\ga)
      \bigr)
    \right] =
    -
    \bigl(
      Q_y^{\al,\be}\otimes Q_x^{\al,\be}
    \bigr)(\fB_\infty) =
    -
    \bigl(
      Q_y^{\al,\be}\otimes Q_x^{\al,\be}
    \bigr)\{\tau=\infty\}.
  \end{align*}
  Since $\sgn(y-x)=-1$, this proves
  \eqref{quen_t} when $y<x$.

  Finally, if $y>x$, we can interchange $(y,\eta)$ and $(x,\ga)$. 
  From the
  definition \eqref{not_path},
  we have 
  \begin{align*}
    \mathfrak D_i^{y,x}(\eta,\ga)
    &=
    -\mathfrak D_i^{x,y}(\ga,\eta),
    \\
    \sgn\bigl(
      P_n(y,\eta)-P_n(x,\ga)
    \bigr)
    &=
    -\sgn\bigl(
      P_n(x,\ga)-P_n(y,\eta)
    \bigr).
  \end{align*}
  Thus both identities acquire the opposite sign, which is precisely
  $\sgn(y-x)=1$.  This proves
  \eqref{quen_b} and
  \eqref{quen_t} in all cases.

  Finally, adding the two identities yields \eqref{quen_c}.
\end{proof}

\subsection{The boundary term}

\begin{proposition}\label{bdry}
  For $x\neq\wt x$,
  \begin{align}\label{bdry_lim}
    \begin{aligned}
      \lim_{\eps\downarrow0}
        \frac1\be T_{1,\eps}(x,\wt x)
      =
      \frac{\al^2}{2}\sgn(\wt x-x)
        \rmP_{\wt x,x}^{\be}(\tau=\infty)
   +
      \al^2\EE\bigl[Q_0^{\be}\{\ga_n>-x\}\bigr]
      -\frac{\al^2}{2}.
    \end{aligned}
  \end{align}
\end{proposition}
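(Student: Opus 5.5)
We follow the same route as in the proof of Proposition~\ref{bulk}, now applied to the terminal white noise $\xi_n$. First we write
\[
  \al\xi_n^\eps(x+\ga_n)=\al\int_\R\phi^\eps\bigl(P_n(x,\ga)-y\bigr)\,\xi_n(dy).
\]
By Fubini and Gaussian integration by parts (Lemma~\ref{ibp}, with the same truncation justification), $T_{1,\eps}$ becomes an expectation of $\int_\R \al\phi^\eps(P_n(x,\ga)-y)\,D_{n,y}\bigl(G_x^\eps(\ga)\log\cZ_{\wt x}^{\be,\eps}\bigr)\,dy$ integrated against $\mu_\be(d\ga)$. We expand by the product rule, as before. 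The term in which $D_{n,y}$ hits the Gibbs density $G_x^\eps$ should vanish pathwise by the boundary cancellation identity from Appendix~\ref{cancel_app}, the analogue of Lemma~\ref{cancel_b}. For the remaining term we use
\[
  D_{n,y}H_z^\eps(\eta)=\al\bigl(\Phi^\eps(P_n(z,\eta)-y)-\Phi^\eps(-y)\bigr),
\]
since $B_n^\eps(t)=\int_0^t\xi_n^\eps$. Together with Proposition~\ref{prop_MD} and the identity $\int\Phi^\eps(a-y)\phi^\eps(b-y)\,dy=\Psi^\eps(a-b)$, this gives the exact prelimit formula
\[
  \frac1\be T_{1,\eps}
  =\al^2\,\EE\Big[\int_{\DN^2}\bigl(\Psi^\eps(P_n(\wt x,\eta)-P_n(x,\ga))-\Psi^\eps(-P_n(x,\ga))\bigr)\,
  Q_{\wt x}^{\be,\eps}(d\eta)\,Q_x^{\be,\eps}(d\ga)\Big].
\]

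Next we remove the mollification. Since $\Psi^\eps\to\frac12\sgn$ away from $0$, and the endpoint $P_n$ has a density under the quenched measures, Proposition~\ref{T_uc} (or its terminal analogue from Appendix~\ref{moll_app}) gives, for $x\neq\wt x$, the limit
\[
  \frac{\al^2}{2}\EE\Big[\int\sgn\bigl(P_n(\wt x,\eta)-P_n(x,\ga)\bigr)\,Q_{\wt x}^\be\otimes Q_x^\be\Big]
  +\frac{\al^2}{2}\EE\bigl[Q_x^\be\,\sgn(x+\ga_n)\bigr].
\]
We treat the two pieces separately.

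\begin{itemize}
\item \textbf{First piece.} The quenched terminal switching identity \eqref{quen_t} turns it into $\frac{\al^2}{2}\sgn(\wt x-x)\rmP^\be_{\wt x,x}(\tau=\infty)$.
\item \textbf{Second piece.} Write $\sgn(x+\ga_n)=2\ind\{x+\ga_n>0\}-1$. Spatial stationarity, as in Remark~\ref{rem19} and the start of Appendix~\ref{moll_app}, then gives $\EE[Q_x^\be\{x+\ga_n>0\}]=\EE[Q_0^\be\{\ga_n>-x\}]$. This piece therefore equals $\al^2\EE[Q_0^\be\{\ga_n>-x\}]-\frac{\al^2}{2}$.
\end{itemize}

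The main obstacle is the limit $\eps\downarrow0$ in the second kernel $\Psi^\eps(-P_n(x,\ga))$. This kernel is singular at the fixed point $0$ rather than at the relative position of two replicas, so it is not excluded by the assumption $x\neq\wt x$. We handle it using absolute continuity of $x+\ga_n$ under $Q_x^\be$, which gives zero mass at $0$, together with the local uniform convergence $Q^{\be,\eps}\to Q^\be$ from Appendix~\ref{moll_app}. The bounded kernels then allow dominated convergence.

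A second point to check is that the cancellation of the Gibbs-derivative term also holds for the boundary noise. The extra $-\Phi^\eps(-y)$ term in $D_{n,y}H$ is path-independent, so it drops out after normalization, exactly as in the bulk case.
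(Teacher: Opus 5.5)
Your proposal is correct and follows the paper's proof essentially step for step. The shared steps are: Gaussian integration by parts against $\xi_n$; the boundary cancellation of Lemma~\ref{cancel_bd}; the convolution identity, which yields the same $\Psi^\eps$ prelimit formula (your $-\Psi^\eps(-P_n(x,\ga))$ equals the paper's $+\Psi^\eps(P_n(x,\ga))$ by oddness); removal of the mollifier via Lemma~\ref{ker_conv} and Proposition~\ref{T_uc}; and finally \eqref{quen_t} plus spatial stationarity.

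One point of framing is slightly off. The two-replica kernel $\Psi^\eps(P_n(\wt x,\eta)-P_n(x,\ga))$ is also singular on a nonempty set regardless of whether $x\neq\wt x$. Both terminal kernels are handled by the same absolute-continuity estimate (Lemma~\ref{diag_lem}). In the mollifier removal, the hypothesis $x\neq\wt x$ matters only for the $(i,j)=(0,0)$ bulk kernel.
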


\begin{proof}
  We continue to suppress $\al$ in the superscripts.  We first
  apply Gaussian integration by parts to the terminal white-noise
  term in \eqref{T12}.  Recall from \eqref{s_xi} that
  \begin{align*}
    \xi_n^\eps\bigl(P_n(x,\ga)\bigr)
    &=
    W_n\left(
      \phi^\eps\bigl(P_n(x,\ga)-\cdot\bigr)
    \right)
    =
    \int_\R
      \phi^\eps\bigl(P_n(x,\ga)-y\bigr)\,
      \xi_n(dy).
  \end{align*}
  Writing
  $
    G_x^\eps(\ga)
    = \frac{e^{\be H_x^\eps(\ga)}}{\cZ_x^{\be,\eps}},
  $
  we can therefore rewrite the first term in \eqref{T12} as
  \begin{align*}
    T_{1,\eps}(x,\wt x)
    &=
    \al\,
    \EE\bigg[
      \int_{\DN}\mu_\be(d\ga)\,
      G_x^\eps(\ga)
      \log\cZ_{\wt x}^{\be,\eps}
      \int_\R
        \phi^\eps\bigl(P_n(x,\ga)-y\bigr)
        \xi_n(dy)
    \bigg].
  \end{align*}
  By Fubini's theorem and the Gaussian integration-by-parts formula
  \eqref{Gibp1},
  \begin{align}\label{T1_ibp}
    \begin{aligned}
      T_{1,\eps}(x,\wt x)
      &=
      \al\,
      \EE\bigg[
        \int_{\DN}\mu_\be(d\ga)
        \int_\R
          \phi^\eps\bigl(P_n(x,\ga)-y\bigr)
          D_{n,y}\left(
            G_x^\eps(\ga)
            \log\cZ_{\wt x}^{\be,\eps}
          \right)
        \,dy
      \bigg].
    \end{aligned}
  \end{align}
  Expanding the Malliavin derivative by the product rule gives
  \begin{align}\label{T1_split}
    \begin{aligned}
      T_{1,\eps}(x,\wt x)
      = &
      \al\,
      \EE\bigg[
        \int_\R
        \bigl(
          D_{n,y}\log\cZ_{\wt x}^{\be,\eps}
        \bigr)
        \int_{\DN}
          \phi^\eps\bigl(P_n(x,\ga)-y\bigr)
          Q_x^{\be,\eps}(d\ga)
        \,dy
      \bigg]
      +\cR_{1,\eps},
    \end{aligned}
  \end{align}
where
$$\cR_{1,\eps}
      = 
      \EE\bigg[
        \bigl(\log\cZ_{\wt x}^{\be,\eps}\bigr)
        \int_\R dy
        \int_{\DN}\mu_\be(d\ga)\,
       \big[ D_{n,y}G_x^\eps(\ga) \big]
        \al\phi^\eps\bigl(P_n(x,\ga)-y\bigr)
      \bigg].$$
  By the cancellation in Lemma~\ref{cancel_bd}, 
  the inner two integrals in
  $\cR_{1,\eps}$ vanish almost surely.  Hence
  \begin{align}\label{T1_after_cancel}
    \begin{aligned}
      T_{1,\eps}(x,\wt x)
      =
      \al\,
      \EE\bigg[
        \int_\R
        \bigl(
          D_{n,y}\log\cZ_{\wt x}^{\be,\eps}
        \bigr)
        \int_{\DN}
          \phi^\eps\bigl(P_n(x,\ga)-y\bigr)
          Q_x^{\be,\eps}(d\ga)
        \,dy
      \bigg].
    \end{aligned}
  \end{align}

  The deterministic factor relating $\cZ$ to the normalized partition
  function has zero Malliavin derivative.  Thus Proposition~\ref{prop_MD}
  and the $j=n$ case of \eqref{MD_H} yield
  \begin{align}\label{T1_DZ}
    \begin{aligned}
      D_{n,y}\log\cZ_{\wt x}^{\be,\eps}
      =
      \be\al
      \int_{\DN}
        \Big[
          \Phi^\eps\bigl(P_n(\wt x,\wt\ga)-y\bigr)
          -\Phi^\eps(-y)
        \Big]
        Q_{\wt x}^{\be,\eps}(d\wt\ga).
    \end{aligned}
  \end{align}
  Substituting \eqref{T1_DZ} into
  \eqref{T1_after_cancel} and applying Fubini once more gives
  \begin{align}\label{T1_conv}
    \begin{aligned}
      \frac1\be T_{1,\eps}(x,\wt x)
      =
      \al^2\EE\bigg[
        \int_{\DN^2}
        \bigg(\int_\R
        &\big[
          \Phi^\eps\bigl(P_n(\wt x,\wt\ga)-y\bigr)  -\Phi^\eps(-y)  \big]
        \phi^\eps\bigl(P_n(x,\ga)-y\bigr)\,dy \bigg)\, \\
        &\qquad\qquad\qquad\times
        Q_{\wt x}^{\be,\eps}(d\wt\ga) Q_x^{\be,\eps}(d\ga)   \bigg].
    \end{aligned}
  \end{align}
  Consequently,
  \begin{align}\label{T1_pre}
    \begin{aligned}
      \frac1\be T_{1,\eps}(x,\wt x)
      =\al^2\EE\bigg(  \int_{\DN^2}   
      \Big[   \Psi^\eps\bigl( P_n(\wt x,\wt\ga)-P_n(x,\ga)    \bigr)
          +\Psi^\eps\bigl(P_n(x,\ga)\bigr) \Big]
        Q_{\wt x}^{\be,\eps}(d\wt\ga)
        Q_x^{\be,\eps}(d\ga)    \bigg).
    \end{aligned}
  \end{align}
  Here, we again used the convolution identity
  \[
    \int_\R\Phi^\eps(a-y)\phi^\eps(b-y)\,dy
    =\Psi^\eps(a-b)
  \]
  and the oddness of $\Psi^\eps$ for the term containing
  $\Phi^\eps(-y)$.
  Therefore, it follows from 
  Proposition~\ref{T_uc} that

  \noi
  \begin{align}\label{T1_sgn}
    \begin{aligned}
      \lim_{\eps\downarrow0}
        \frac1\be T_{1,\eps}(x,\wt x)
      =\frac{\al^2}{2}\EE\bigg(
        \int_{\DN^2} 
        \big[ \sgn( P_n(\wt x,\wt\ga)-P_n(x,\ga)  )
          +\sgn(P_n(x,\ga))  \big] \,
        Q_{\wt x}^{\be}(d\wt\ga)Q_x^{\be}(d\ga)   \bigg).
    \end{aligned}
  \end{align}
  By \eqref{quen_t}, the contribution
  of the first sign function under the Brownian expectation is
  $ \sgn(\wt x-x)\,
    \rmP_{\wt x,x}^{\be}(\tau=\infty).$
The second sign function gives 
  \[
    \int_{\DN}\sgn\bigl(P_n(x,\ga)\bigr)Q_x^{\be}(d\ga)
    =2Q_x^{\be}\{x+ \ga_n>0\}-1.
  \]
  Spatial stationarity of the Brownian increments gives
  $ \EE[ Q_x^{\be}\{x+\ga_n>0\}] =\EE [Q_0^{\be}\{\ga_n>-x\} ]$;
    see also Remark \ref{rem19}.
  Plugging the last three identities into
  \eqref{T1_sgn} proves \eqref{bdry_lim}.
\end{proof}

\subsection{Completion of the main theorem and immediate corollaries}
\label{main_done}

We now restore both $\al$ and $\be$ in the superscripts.  Combining
Proposition \ref{bulk} and Proposition \ref{bdry}, and using
  \[
  \rmP_{\wt x,x}^{\al,\be}(\tau=\infty)
  =1-\rmP_{\wt x,x}^{\al,\be}(\tau\leq n-1),
  \]
we obtain
\begin{align*}
  \frac{\partial}{\partial x}
  \EE\bigl[F_{\wt x}^{\al,\be}F_x^{\al,\be}\bigr]
  &={}
  \frac12\sgn(\wt x-x)
    \rmP_{\wt x,x}^{\al,\be}(\tau\leq n-1)
  \\
  &\quad+
  \frac{\al^2}{2}\sgn(\wt x-x)
  \bigl[1-\rmP_{\wt x,x}^{\al,\be}(\tau\leq n-1)\bigr]
   +
  \al^2\EE\bigl[Q_0^{\al,\be}\{\ga_n>-x\}\bigr]
  -\frac{\al^2}{2}.
\end{align*}
Since
 $ \ind_{\{\wt x<x\}}
  =\frac12\bigl(1-\sgn(\wt x-x)\bigr)$ for $x\neq \wt x$,
the preceding display is exactly \eqref{main_eq}.  This completes
the proof of
Theorem~\ref{main_thm}. \hfill $\square$

\begin{proof}[Proof of Corollary~\ref{flat_cor}]
  Set $\al=0$ in Theorem~\ref{main_thm} and use
  $F_x^{\rfree,\be}=\be^{-1}\log Z_x^{\rfree,\be}$.  This gives the
  first identity in \eqref{flat_eq}.  Spatial stationarity implies
  that both
  $\EE[\log Z_x^{\rfree,\be}]$ and
  $\EE[(\log Z_x^{\rfree,\be})^2]$ are independent of $x$.  Hence
  differentiating the covariance gives the same derivative as the raw
  two-point expectation.  Moreover,
  \[
    \EE\left[
      \bigl(
        \log Z_{\wt x}^{\rfree,\be}
        -\log Z_x^{\rfree,\be}
      \bigr)^2
    \right]
    =2\EE\bigl[(\log Z_0^{\rfree,\be})^2\bigr]
     -2\EE\bigl[
       \log Z_{\wt x}^{\rfree,\be}
       \log Z_x^{\rfree,\be}
     \bigr].
  \]
  Differentiating this identity proves the final equality in
  \eqref{flat_eq}.
\end{proof}

\begin{proof}[Proof of Corollary~\ref{stat_exit}]
  Set $\al=1$ in Theorem~\ref{main_thm}.  The coefficient of the
  coalescence probability vanishes, and the remaining terms give
  \eqref{stat_exit_eq}.
\end{proof}

\begin{proof}[Proof of Corollary~\ref{stat_coal}]
  Fix $0\leq\wt x<x$ and write
  \[
    p(\al)
    =\rmP_{\wt x,x}^{\al,\be}(\tau\leq n-1).
  \]
  We first note that $p(\al)\to p(1)$ as $\al\to1$.  Indeed, for each
  fixed environment, the Gibbs densities depend continuously on
  $\al$.  The almost-sure sublinear growth of the terminal Brownian
  path, together with the exponential factor $e^{-\be\ga_n}$ in the
  reference measure, supplies an integrable majorant for $\al$ in a
  neighborhood of $1$.  Dominated convergence therefore gives local
  total-variation continuity of each quenched measure
  $Q_z^{\al,\be}$, and bounded convergence gives the asserted
  continuity of $p$.

  Since $x>0$, the event $\{\ga_n>-x\}$ is certain.  Also
  $\sgn(\wt x-x)=-1$.  Theorem~\ref{main_thm} and the identity
  $\log Z_z^{\al,\be}=\be F_z^{\al,\be}$ consequently yield
  \begin{align*}
    \frac{\partial}{\partial x}
    \EE\bigl[
      \log Z_{\wt x}^{\al,\be}\log Z_x^{\al,\be}
    \bigr]
    =\frac{\be^2(\al^2-1)}{2}\,p(\al).
  \end{align*}
  Differentiating at $\al=1$ and using the continuity of $p$ gives the
  first identity of the corollary.

  For the second identity, interchange the roles of $x$ and $\wt x$ in
  Theorem~\ref{main_thm}.  Since $\wt x\geq0$, the event
  $\{\ga_n>-\wt x\}$ is again certain, and
  $\sgn(x-\wt x)=1$.  Hence
  \begin{align*}
    \frac{\partial}{\partial\wt x}
    \EE\bigl[
      \log Z_{\wt x}^{\al,\be}\log Z_x^{\al,\be}
    \bigr]
    =\be^2\left[
      \frac{1-\al^2}{2}\,p(\al)+\al^2
    \right].
  \end{align*}
  After division by $\be^2$, its derivative at $\al=1$ equals
  $2-p(1)$, which proves the second identity.
\qedhere

\end{proof}

\subsection{The stationary Stein identity}
 \label{stein_sec}

Throughout this subsection the terminal strength is $\al=1$.  The
quenched switching identity from
Lemma~\ref{quen_lem} is essential here,
because the polymer in \eqref{stat_stein_eq} is multiplied by the
random weight $\partial_j\varphi(\mathbf F_{\mathbf y}^{\be})$.

\begin{proof}[Proof of Proposition~\ref{stat_stein}]
  For $0\leq\eps<1$, write
  \[
    F_z^{\be, \eps}
    =F_z^{\rstat,\be,\eps}
    =\frac1\be\log Z_z^{\rstat,\be,\eps}
    \AND
    \mathbf F_{\mathbf y}^{\be, \eps}
    =\bigl(F_{y_0}^{\be, \eps},\ldots,F_{y_m}^{\be, \eps}\bigr).
  \]
  Here, we use the normalized partition function from
  \eqref{def_ZR} with $R=\infty$ and $\lambda=0$.
 The deterministic
  factor $\be^{-n}$ does not affect its spatial or Malliavin
  derivatives, but retaining it is important because $\varphi$ is
  nonlinear.  Set
  \[
    G_\eps(x)
    =\EE\bigl[
      \varphi(\mathbf F_{\mathbf y}^{\be, \eps})F_x^{\be, \eps}
    \bigr].
  \]

  Fix a compact interval
  $K\Subset\R\setminus\{y_0,\ldots,y_m\}$.  For fixed $\eps>0$,
  Lemma~\ref{pf_uxeps1}, the moment estimates
  \eqref{LP_logZRL} and \eqref{moll_Dmom}, and the
  fact that a function with bounded gradient has at most linear growth
  justify differentiation under the expectation.  Thus
  \begin{align}
    G_\eps'(x)
    =
    \EE\bigl[
      \varphi(\mathbf F_{\mathbf y}^{\be, \eps})
      \partial_xF_x^{\be, \eps}
    \bigr],
    \qquad x\in K.
  \end{align}

  By Proposition~\ref{prop_MD}, every component of
  $\mathbf F_{\mathbf y}^{\be, \eps}$ belongs to $\DD^{1,2}$.
 The multivariate Malliavin chain rule 
  (\cite[Proposition 1.2.4]{Nua06}) therefore gives
  \[
    D\varphi(\mathbf F_{\mathbf y}^{\be, \eps})
    =
    \sum_{j=0}^{m}
      \partial_j\varphi(\mathbf F_{\mathbf y}^{\be, \eps})
      DF_{y_j}^{\be, \eps}.
  \]
  Next we apply Lemma~\ref{ibp} to the white-noise terms in
  \eqref{sp_der}.  When the Malliavin derivative
  acts on $\varphi(\mathbf F_{\mathbf y}^{\be, \eps})$, the preceding chain rule
  produces one copy of $DF_{y_j}^{\be, \eps}$.  When it acts on the Gibbs
  density of $Q_x^{\rstat,\be,\eps}$, its contribution vanishes by
  Lemmas~\ref{cancel_b} and
  \ref{cancel_bd}; these cancellations hold pathwise and may
  therefore be multiplied by the random factor
  $\varphi(\mathbf F_{\mathbf y}^{\be, \eps})$.  All uses of Fubini and
  Gaussian integration by parts are justified by the same moment
  bounds and by $\|\nabla\varphi\|_\infty<\infty$.

  We use the mollified four-point kernel $\Xi_{i,\eps}^{y,x}$ from
  \eqref{not_path}.  The calculation in
  \eqref{T2_Xi} and
  \eqref{T1_pre}, with the derivative of the first factor now
  supplied by the multivariate chain rule, yields
  \begin{align}\label{stein_pre}
    G_\eps'(x)
    &=
    \sum_{j=0}^{m}
    \EE\bigg[
      \partial_j\varphi(\mathbf F_{\mathbf y}^{\be, \eps})
      \int_{\DN^2}
      \bigg\{ 
      \sum_{i=0}^{n-1}
        \Xi_{i,\eps}^{y_j,x}(\eta,\ga)
      +\Psi^\eps\bigl(
        P_n(y_j,\eta)-P_n(x,\ga)
      \bigr)
      +\Psi^\eps\bigl(P_n(x,\ga)\bigr)
      \bigg\}
    \notag\\
    &\hspace{42mm}\times
      Q_{y_j}^{\rstat,\be,\eps}(d\eta)
      Q_x^{\rstat,\be,\eps}(d\ga)
    \bigg].
  \end{align}

  By \eqref{LPcvg_eps},
  $\mathbf F_{\mathbf y}^{\be, \eps}\to
  \mathbf F_{\mathbf y}^{\be}$ in probability.  Since each
  $\partial_j\varphi$ is bounded and continuous,
  \begin{align}
    \partial_j\varphi(\mathbf F_{\mathbf y}^{\be, \eps})
    \longrightarrow
    \partial_j\varphi(\mathbf F_{\mathbf y}^{\be})
    \qquad\text{in }L^1(\Omega).
  \end{align}
  Lemma~\ref{ker_conv}, applied with first starting point $y_j$,
  gives almost-sure uniform convergence on $K$ of every quenched kernel
  in \eqref{stein_pre}.  The kernels are bounded by a
  deterministic constant.  More explicitly, if
  $\mathcal K_{j,\eps}(x)$ denotes the quenched double integral in
  \eqref{stein_pre} and $\mathcal K_j(x)$ its sign-kernel
  limit, then
  \begin{align*}
    &\sup_{x\in K}
    \left|
      \EE\left[
        \partial_j\varphi(\mathbf F_{\mathbf y}^{\be, \eps})
        \mathcal K_{j,\eps}(x)
      \right]
      -
      \EE\left[
        \partial_j\varphi(\mathbf F_{\mathbf y}^{\be})
        \mathcal K_j(x)
      \right]
    \right|
    \\
    &\qquad\leq
    C\,
    \EE\left[
      \left|
        \partial_j\varphi(\mathbf F_{\mathbf y}^{\be, \eps})
        -
        \partial_j\varphi(\mathbf F_{\mathbf y}^{\be})
      \right|
    \right]
    +
    \|\partial_j\varphi\|_\infty
    \EE\left[
      \sup_{x\in K}
      |\mathcal K_{j,\eps}(x)-\mathcal K_j(x)|
    \right],
  \end{align*}
  and the right-hand side tends to zero.  Summing over $j$ gives
  \begin{align}\label{stein_du}
    \sup_{x\in K}|G_\eps'(x)-g(x)|
    \longrightarrow0.
  \end{align}
  Here
  \begin{align}\label{stein_sgn}
    g(x)
    &={}
    \frac12\sum_{j=0}^{m}
    \EE\bigg[
      \partial_j\varphi(\mathbf F_{\mathbf y}^{\be})
      \int_{\DN^2}
      \bigg\{
        \sum_{i=0}^{n-1}\mathfrak D_i^{y_j,x}(\eta,\ga)
        +\sgn\bigl(
          P_n(y_j,\eta)-P_n(x,\ga)
        \bigr)
        +\sgn\bigl(P_n(x,\ga)\bigr)
      \bigg\}
    \notag\\
    &\hspace{45mm}\times
      Q_{y_j}^{\rstat,\be}(d\eta)
      Q_x^{\rstat,\be}(d\ga)
    \bigg].
  \end{align}

  Since $\varphi$ has at most linear growth, \eqref{LPcvg_eps},
  H\"older's inequality, and \eqref{LP_logZRL} imply, for every fixed
  $x$,
  \[
    G_\eps(x)
    \longrightarrow
    G(x)=
    \EE\bigl[
      \varphi(\mathbf F_{\mathbf y}^{\be})
      F_x^{\rstat,\be}
    \bigr].
  \]
  The uniform convergence theorem for derivatives
  \cite[Theorem~7.17]{Rudin}, applied on $K$ using
  \eqref{stein_du}, now shows that $G$ is
  continuously differentiable on $K$ and that $G'=g$ there.

  Apply the quenched identity
  \eqref{quen_c} with $y=y_j$.  The first two
  terms inside the braces in \eqref{stein_sgn} reduce to
  $\sgn(y_j-x)$.  Hence
  \begin{align*}
    g(x)
    =
    \frac12\sum_{j=0}^{m}
    \EE\left[
      \partial_j\varphi(\mathbf F_{\mathbf y}^{\be})
      \left\{
        \sgn(y_j-x)
        +\int_{\DN}\sgn(x+\ga_n)
          Q_x^{\rstat,\be}(d\ga)
      \right\}
    \right].
  \end{align*}
  The quenched endpoint law is absolutely continuous with respect to
  Lebesgue measure, so it assigns no mass to $x+\ga_n=0$.  Therefore
  \[
    \frac12\left\{
      \sgn(y_j-x)
      +\int_{\DN}\sgn(x+\ga_n)
        Q_x^{\rstat,\be}(d\ga)
    \right\}
    =
    Q_x^{\rstat,\be}\{\ga_n>-x\}
    -\ind_{\{x>y_j\}}.
  \]
  Substituting this identity into the preceding display proves
  \eqref{stat_stein_eq}.  Since $K$ was arbitrary, the same argument proves continuous
  differentiability on every connected component of
  $\R\setminus\{y_0,\ldots,y_m\}$.
\end{proof}

\subsection{The Brownian Busemann process}
 \label{buse_sec}

\begin{proof}[Proof of Corollary \ref{buse_cor}]
  Fix $x_0<x_1<\cdots<x_m$ and write
  \[
    \widehat\Delta_k
    =
    F_{x_k}^{\rstat,\be}-F_{x_{k-1}}^{\rstat,\be},
    \qquad 1\leq k\leq m.
  \]
  Let $\varphi\in C^1(\R^m)$ have bounded gradient and define
  \[
    \widehat\varphi(z_0,\ldots,z_m)
    =
    \varphi(z_1-z_0,\ldots,z_m-z_{m-1}).
  \]
  Its derivatives satisfy
  \begin{align*}
    \partial_0\widehat\varphi
    &=-\partial_1\varphi,
    \quad   \partial_m\widehat\varphi
    =\partial_m\varphi,
    \AND
    \partial_j\widehat\varphi
    =\partial_j\varphi-\partial_{j+1}\varphi
    \quad
    \text{for  $1\leq j\leq m-1$}.
  \end{align*}
  In particular,
  \begin{align}\label{buse_tel}
    \sum_{j=0}^{m}\partial_j\widehat\varphi
    =0
    \AND
    \sum_{j=0}^{k-1}\partial_j\widehat\varphi
    =-\partial_k\varphi.
  \end{align}

  Fix $k\in\{1,\ldots,m\}$ and let
  $s\in(x_{k-1},x_k)$.  
  Then, we can apply
  Proposition~\ref{stat_stein} with
    $(y_0,\ldots,y_m)=(x_0,\ldots,x_m)$
  and with the test function $\widehat\varphi$.  Since
  \[
    \widehat\varphi\bigl(
      F_{x_0}^{\rstat,\be},\ldots,F_{x_m}^{\rstat,\be}
    \bigr)
    =
    \varphi(\widehat\Delta_1,\ldots,\widehat\Delta_m),
  \]
  Proposition~\ref{stat_stein} gives
  \begin{align}\label{buse_loc1}
    &\frac{\partial}{\partial s}
    \EE\bigl[
      \varphi(\widehat\Delta_1,\ldots,\widehat\Delta_m)
      F_s^{\rstat,\be}
    \bigr]
    \notag\\
    &\quad=
    \sum_{j=0}^{m}
    \EE\left[
      \partial_j\widehat\varphi
      \bigl(
        F_{x_0}^{\rstat,\be},\ldots,F_{x_m}^{\rstat,\be}
      \bigr)
      \left(
        Q_s^{\rstat,\be}\{\ga_n>-s\}
        -\ind_{\{s>x_j\}}
      \right)
    \right].
  \end{align}
  We first consider the terms containing the quenched exit
  probability.  Since
  $Q_s^{\rstat,\be}\{\ga_n>-s\}$ does not depend on $j$,
  \begin{align*}
    &\sum_{j=0}^{m}
    \EE\left[
      \partial_j\widehat\varphi
      \bigl(
        F_{x_0}^{\rstat,\be},\ldots,F_{x_m}^{\rstat,\be}
      \bigr)
      Q_s^{\rstat,\be}\{\ga_n>-s\}
    \right]
    \\
    &\qquad=
    \EE\bigg[
      Q_s^{\rstat,\be}\{\ga_n>-s\}
      \sum_{j=0}^{m}
      \partial_j\widehat\varphi
      \bigl(
        F_{x_0}^{\rstat,\be},\ldots,F_{x_m}^{\rstat,\be}
      \bigr)
    \bigg]
    =0,
  \end{align*}
  where the last equality follows from the first identity in
  \eqref{buse_tel}.  Notice that this cancellation is pointwise in
  the environment; no independence between the exit probability and
  the free energies is being used.

  It remains to consider the indicator terms.  Since
  $s\in(x_{k-1},x_k)$,
  \[
    \ind_{\{s>x_j\}}
    =
    \begin{cases}
      1, & 0\leq j\leq k-1,\\
      0, & k\leq j\leq m.
    \end{cases}
  \]
  Therefore,  \eqref{buse_loc1} becomes
  \begin{align}
    \frac{\partial}{\partial s}
    \EE\bigl[
      \varphi(\widehat\Delta_1,\ldots,\widehat\Delta_m)
      F_s^{\rstat,\be}
    \bigr]
    &=
    -
    \EE\bigg[
      \sum_{j=0}^{k-1}
      \partial_j\widehat\varphi
      \bigl(
        F_{x_0}^{\rstat,\be},\ldots,F_{x_m}^{\rstat,\be}
      \bigr)
    \bigg]  \notag \\
   &=  \EE\bigl[
      \partial_k\varphi(
        \widehat\Delta_1,\ldots,\widehat\Delta_m
      )
    \bigr],  \label{buse_loc}
  \end{align}
where we used the second identity in \eqref{buse_tel}.
 Next, we integrate first over
  $[x_{k-1}+\delta,x_k-\delta]$ and then let $\delta\downarrow0$.
  Since
  $\cZ_x^{\rstat,\be}=\be^nZ_x^{\rstat,\be}$, the continuity of
  $x\mapsto\cZ_x^{\rstat,\be}$ from Lemma~\ref{conv_Z}, together with
  the moment bound
  \eqref{LP_logZRL}, justifies the endpoint limit.
  In this way, we have the following
  multivariate Stein identity
  \begin{align}
    \EE\bigl[
      \varphi(\widehat\Delta_1,\ldots,\widehat\Delta_m)\widehat\Delta_k
    \bigr]
    =
    (x_k-x_{k-1})
    \EE\bigl[
      \partial_k\varphi(\widehat\Delta_1,\ldots,\widehat\Delta_m)
    \bigr].
  \end{align}
  Thus, by Stein's lemma (see, e.g., \cite[Lemma 1]{Mec09}),
  the increments are independent centered Gaussian variables with
  the asserted variances.  Finally, the identity
  $\cZ_x^{\rstat,\be}=\be^nZ_x^{\rstat,\be}$ and
  Lemma~\ref{conv_Z} imply that
  $x\mapsto F_x^{\rstat,\be}$ is almost surely continuous.  Since
  $\cB^{\be}(0)=0$, this identifies $\cB^{\be}$ as a two-sided
  standard Brownian motion.
\end{proof}

\subsection{The zero-temperature limit}
\label{zero_sec}

For the zero-temperature argument, it is convenient to absorb the
exponential reference density into the Hamiltonian.  In the notation of
\eqref{not_zero}, the identities
\eqref{mu} and \eqref{Z_gen} give

\noi
\begin{align}\label{lap_ZQ}
  \begin{aligned}
    Z_z^{\al,\be}
      &=\int_{\DN}
        e^{\be\cH_z^{\al}(\ga)}\,d\ga
    \AND
    Q_z^{\al,\be}(d\ga)
      &=\frac{e^{\be\cH_z^{\al}(\ga)}}
        {Z_z^{\al,\be}}\,d\ga.
  \end{aligned}
\end{align}
Thus the limit $\be\to\infty$ is a Laplace-principle limit on the
unbounded simplex $\DN$.

\begin{lemma}[\textsf{Uniqueness of the BLPP geodesic}]
  \label{geo_uniq}
  Fix $n\in\Z_{>0}$, $\al\in\R$, and $z\in\R$.  Almost surely, the
  variational problem defining $L_z^{\al}$ has a unique maximizer in
  $\overline{\DN}$.
\end{lemma}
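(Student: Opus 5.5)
First we establish existence, then uniqueness. Almost surely each two-sided Brownian motion has sublinear growth, $|B_i(t)|=o(|t|)$ as $|t|\to\infty$. Hence $\cH_z^{\al}(\ga)\le C(\omega)+o(\ga_n)-\ga_n\to-\infty$ as $\ga_n\to\infty$ uniformly on $\overline{\DN}$. The map $\ga\mapsto\cH_z^{\al}(\ga)$ is continuous on the closed set $\overline{\DN}$, so the supremum is attained on a compact set $\overline{\DN}\cap\{\ga_n\le M(\omega)\}$. Therefore a maximizer exists almost surely, and it suffices to prove uniqueness on each compact box $\overline{\DN}\cap\{\ga_n\le M\}$ for integer $M$, then take a countable union.

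For uniqueness we use the standard Brownian LPP argument: the distinct maximizers would force an exact coincidence of two \emph{independent} continuous random variables. Suppose $\ga\neq\ga'$ are two maximizers. Let $i$ be the largest index with $\ga_i\neq\ga'_i$, say $\ga_i<\ga'_i$. We decompose the Hamiltonian according to the level $i-1$ (for $i\ge1$); then $\ga_j=\ga'_j$ for $j>i$, so the contributions above level $i$ agree. The variational values then split as follows:
\[
L=\sup_{t\le\ga_{i}}\big[\text{(levels $<i-1$ up to $t$ on level $i-1$)}\big]+\cdots .
\]
This is hard to make fully clean, so we instead use a rational-separation argument. For rationals $r<r'$, consider the maximum over paths with $\ga_i\le r$ versus $\ga_i\ge r'$, with all remaining coordinates optimized. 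Two maximizers with $\ga_i<r<r'<\ga'_i$ give equality of these two restricted maxima.

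The key step is to condition on all Brownian motions except $B_{i-1}$ restricted to $[z+r,z+r']$. Between $r$ and $r'$, only level $i-1$ is traversed by the paths in the second family and not the first. We write $B_{i-1}(t)=B_{i-1}(z+r)+(\text{bridge})+\frac{t-z-r}{r'-r}\,\Delta$, where $\Delta=B_{i-1}(z+r')-B_{i-1}(z+r)$. Then the second restricted maximum equals $\Delta+(\text{a quantity independent of }\Delta)$, while the first restricted maximum does not depend on $\Delta$ at all. The increment $\Delta$ is Gaussian and independent of the bridge and of everything else, so equality has probability zero.

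The case $i=n$ with terminal term $\al B_n$ is handled in the same way, with care that the terminal term only enters through $\ga_n$. Some bookkeeping is needed, and the main obstacle is checking that the linear-in-$\Delta$ decomposition is exact. A path with $\ga_i\ge r'$ covers $[z+r,z+r']$ entirely on level $i-1$ only if $\ga_{i-1}\le r$. We therefore refine the event further by also splitting on whether $\ga'_{i-1}\le r$, using more rationals, so that each comparison involves families which traverse $[z+r,z+r']$ fully on level $i-1$ or do not touch it. A countable union over all rational choices and indices then concludes the proof.
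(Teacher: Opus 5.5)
\textbf{Comparison with the paper.} Your existence argument matches the paper's: the paper uses the polynomial growth bound of Lemma~\ref{B_poly} where you use sublinear growth. For uniqueness you take a different route. The paper invokes the Kim--Pollard criterion: $\ga\mapsto\cH_z^{\al}(\ga)$ is a Gaussian process with continuous paths on the $\sigma$-compact set $\overline{\DN}$, and $\Var(\cH_z^{\al}(\ga)-\cH_z^{\al}(\eta))\ge\sum_{i=0}^{n-1}\|\ind_{I_i(\ga)}-\ind_{I_i(\eta)}\|_{L^2(\R)}^2>0$ for $\ga\neq\eta$. You instead prove the needed special case by hand. You isolate one white-noise increment $\Delta$ that enters every path of one closed family with coefficient $1$ and every path of a competing family with coefficient $0$. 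This is self-contained and shows directly why the drift $-\ga_n$ and the terminal term do not matter: they are measurable with respect to the noise other than $\Delta$. The cost is a case analysis that the paper's black box avoids. One small point: write the decomposition with increments $\xi_{k-1}(\cdot)$ rather than the value $B_{k-1}(z+r)$. When $0\in(z+r,z+r')$, that value is correlated with $\Delta$. This is harmless, since only increments of a bulk level enter $\cH_z^{\al}$.

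\textbf{The gap.} Your case analysis has a hole. Splitting on $\ga'_{i-1}\le r$ with rationals $\ga_i<r<r'<\ga'_i$ handles $\ga'_{i-1}<\ga'_i$. But suppose $\ga'_{i-1}=\ga'_i$, i.e.\ the maximizer $\ga'$ makes consecutive vertical steps. Then no such $r$ exists, neither path visits level $i-1$ over $(z+r,z+r']$, and that level supplies no separating Gaussian. Nothing in your argument excludes this configuration.

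\textbf{The repair.} Descend to a lower level. Put $k=\min\{j\le i:\ga'_j=\ga'_i\}$. Then $k\ge1$ and $\ga'_{k-1}<\ga'_i$, because $\ga'_0=0\le\ga_i<\ga'_i$. Choose rationals $\max(\ga_i,\ga'_{k-1})<r<r'<\ga'_i$, set $\Delta=\xi_{k-1}((z+r,z+r'])$, and compare
\[
A=\{\ga_k\le r\}\ni\ga
\quad\text{and}\quad
B=\{\ga_{k-1}\le r,\ \ga_k\ge r'\}\ni\ga'.
\]
Every path in $A$ has its level-$(k-1)$ segment inside $(-\infty,z+r]$. Every path in $B$ covers $(z+r,z+r']$ on level $k-1$. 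Let $\mathcal G$ be generated by the noise off $\{k-1\}\times(z+r,z+r']$. Then $\sup_A\cH_z^{\al}$ is $\mathcal G$-measurable, and $\sup_B\cH_z^{\al}=\Delta+(\text{a }\mathcal G\text{-measurable finite variable})$. Since $\Delta\sim N(0,r'-r)$ is independent of $\mathcal G$, equality of the two suprema has probability zero. A countable union over $i,k,r,r'$ finishes the proof. With this choice you need neither the maximality of $i$ nor the Brownian bridge decomposition.
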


\begin{proof}
  Fix $\theta\in(1/2,1)$.  Lemma
  \ref{B_poly} gives, almost surely,
  \begin{align}\label{geo_coer}
    \cH_z^{\al}(\ga)
    \leq C_\omega(1+\ga_n^\theta)-\ga_n,
    \qquad \ga\in\overline{\DN}.
  \end{align}
  The right-hand side tends to $-\infty$ as $\ga_n\to\infty$.
  Since $\cH_z^{\al}$ is continuous, its supremum is finite and is
  attained in
  \[
    K_R=\{\ga\in\overline{\DN}:\ga_n\leq R\}
  \]
  for some finite random $R$.

  It remains to prove uniqueness.  For
  $\ga,\eta\in\overline{\DN}$, set
  \[
    I_i(\ga)=(z+\ga_i,z+\ga_{i+1}],
    \qquad 0\leq i\leq n-1.
  \]
  Independence of the Brownian motions on the different levels gives
  \begin{align*}
    \Var\bigl(
      H_z^{\al}(\ga)-H_z^{\al}(\eta)
    \bigr)
    &\geq
    \sum_{i=0}^{n-1}
    \bigl\|
      \ind_{I_i(\ga)}-\ind_{I_i(\eta)}
    \bigr\|_{L^2(\R)}^2.
  \end{align*}
  If the right-hand side were zero, then the corresponding intervals
  would agree up to null sets.  Starting from
  $\ga_0=\eta_0=0$ and proceeding level by level would give
  $\ga_i=\eta_i$ for every $i$, contrary to $\ga\neq\eta$.  Hence
  every nontrivial increment has strictly positive variance.

  The space $\overline{\DN}$ is $\sigma$-compact, and
  $\ga\mapsto\cH_z^{\al}(\ga)$ is a Gaussian process with continuous
  sample paths.  The deterministic term $-\ga_n$ does not affect
  increment variances, so
  \[
    \Var\bigl(
      \cH_z^{\al}(\ga)-\cH_z^{\al}(\eta)
    \bigr)>0,
    \qquad \ga\neq\eta.
  \]
  Kim--Pollard's uniqueness criterion
  \cite[Lemma~2.6]{KP90} implies that the supremum is attained at
  most one point.  The coercivity argument above shows that it is
  attained, and therefore the maximizer is almost surely unique.
\end{proof}

\begin{lemma}[\textsf{Quenched zero-temperature concentration}]
  \label{Q_geo}
  Fix $n\in\Z_{>0}$, $\al\in\R$, and $z\in\R$.  Then, almost surely,
  \begin{align}\label{Q_zero}
    Q_z^{\al,\be}
    \Longrightarrow
    \delta_{\Gamma_z^{\al}}
    \qquad\text{as }\be\to\infty
  \end{align}
  weakly on $\overline{\DN}$.
\end{lemma}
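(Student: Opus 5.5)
We prove \eqref{Q_zero} by a Laplace-principle argument on the unbounded simplex, using the representation \eqref{lap_ZQ}: $Q_z^{\al,\be}(d\ga)\propto e^{\be\cH_z^{\al}(\ga)}\,d\ga$ on $\DN$. Fix an environment in the full-measure event where all Brownian paths are continuous, the coercivity bound \eqref{geo_coer} holds, and the maximizer $\Gamma=\Gamma_z^{\al}$ is unique (Lemma~\ref{geo_uniq}). It suffices to show that for every $\delta>0$, $Q_z^{\al,\be}\{\ga:|\ga-\Gamma|\geq\delta\}\to0$ as $\be\to\infty$, since weak convergence to a point mass is equivalent to this concentration.

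\emph{Step 1: a uniform gap.} Set $L=L_z^{\al}=\cH_z^{\al}(\Gamma)$. By \eqref{geo_coer} choose $R$ with $\cH_z^{\al}(\ga)\leq L-1$ whenever $\ga_n\geq R$, and in fact $\cH_z^{\al}(\ga)\leq L+C_\omega+1-\tfrac12\ga_n$ for $\ga_n\geq R$ after enlarging $R$. On the compact set $K_R\cap\{|\ga-\Gamma|\geq\delta\}$, continuity of $\cH_z^{\al}$ and uniqueness of the maximizer give $\sup\cH_z^{\al}\leq L-2\kappa$ for some $\kappa=\kappa(\delta,\omega)>0$. Together these give $\cH_z^{\al}(\ga)\leq L-2\kappa'$ on all of $\{|\ga-\Gamma|\geq\delta\}$ (with $\kappa'=\min(\kappa,1/2)$), plus the linear decay bound for large $\ga_n$.

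\emph{Step 2: lower bound on the partition function.} By continuity of $\cH_z^{\al}$ at $\Gamma$, there is $r>0$ such that $\cH_z^{\al}\geq L-\kappa'$ on $U=\DN\cap B(\Gamma,r)$. Although $\Gamma$ may lie on the boundary of $\overline{\DN}$, the set $U$ has positive Lebesgue measure $|U|$ since $\overline{\DN}$ is the closure of an open convex set. Hence $Z_z^{\al,\be}\geq|U|e^{\be(L-\kappa')}$.

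\emph{Step 3: upper bound on the bad set.} Split $\{|\ga-\Gamma|\geq\delta\}$ into the part with $\ga_n\leq R'$ (a bounded set of finite Lebesgue volume $V$) and the part with $\ga_n>R'$, for $R'$ large. The first contributes at most $Ve^{\be(L-2\kappa')}$. The second contributes at most $e^{\be(L-2\kappa')}\int_{\ga_n>R'}e^{-\be(\ga_n-R')/2}\,d\ga$ after arranging $R'$ so that $\cH_z^{\al}(\ga)\leq L-2\kappa'-\tfrac12(\ga_n-R')$ there. The volume of the slice $\{\ga\in\DN:\ga_n=t\}$ is $t^{n-1}/(n-1)!$, so this integral is bounded uniformly for $\be\geq1$. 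Dividing by Step 2 gives $Q_z^{\al,\be}\{|\ga-\Gamma|\geq\delta\}\leq C e^{-\be\kappa'}\to0$.

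The main obstacle is Step 1's tail control, which requires converting the sublinear growth bound of Lemma~\ref{B_poly} into a uniform linear decay of $\cH_z^{\al}$ on the unbounded region so that the tail integral is controlled uniformly in $\be$. This follows from $C_\omega(1+\ga_n^\theta)-\ga_n\leq -\tfrac12\ga_n+C'_\omega$. The remaining steps are routine compactness arguments.
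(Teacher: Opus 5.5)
Your proposal is correct and follows essentially the same Laplace-principle argument as the paper. Both proofs use three ingredients: a strict gap of $\cH_z^{\al}$ away from $\Gamma$ (from uniqueness, continuity and coercivity), a lower bound on the partition function from a positive-measure subset of $\DN$ near $\Gamma$, and a linear tail bound $\cH_z^{\al}(\ga)\leq L-c-\tfrac12(\ga_n-R')$ obtained from the sublinear Brownian growth to control the unbounded part of the simplex. The remaining differences are cosmetic: you use metric balls instead of general open neighborhoods, slope $\tfrac12$ instead of $\tfrac14$, and you bound the tail integral uniformly in $\be\geq1$ rather than showing it vanishes.
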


\begin{proof}
  Work on a Brownian realization for which
  Lemma~\ref{geo_uniq} and the coercive estimate
  \eqref{geo_coer} hold.  Write
  \[
    L=L_z^{\al},
    \qquad
    \Gamma=\Gamma_z^{\al},
    \AND
    \cH=\cH_z^{\al}.
  \]
  Let $U$ be an open neighborhood of $\Gamma$ in
  $\overline{\DN}$.  Continuity and coercivity imply that the supremum
  of $\cH$ on the closed set $U^c$ is attained.  By uniqueness,
  \[
    M_U=\sup_{\ga\in U^c}\cH(\ga)<L.
  \]
  Choose $\dl>0$ such that
  \begin{align}\label{comp_gap}
    M_U\leq L-5\dl.
  \end{align}
  By continuity of $\cH$ and density of $\DN$ in
  $\overline{\DN}$, there exists a bounded measurable set
  $V\subset U\cap\DN$ of positive Lebesgue measure such that
  \begin{align} \notag 
    \inf_{\ga\in V}\cH(\ga)\geq L-\dl.
  \end{align}

  The polynomial coercive bound also yields a linear tail estimate.
  After increasing $R$, we may assume that
  $\Gamma\in K_R$, $V\subset K_R$, and
  \begin{align}\label{tail_gap}
    \cH(\ga)
    \leq L-4\dl-\frac14(\ga_n-R),
    \qquad \ga_n\geq R.
  \end{align}
  Indeed, the derivative of
  $t\mapsto C_\omega(1+t^\theta)-t$ is eventually less than
  $-1/4$, while the function itself tends to $-\infty$.

  The denominator in \eqref{lap_ZQ} is bounded below by
  $|V|e^{\be(L-\dl)}$.  Splitting $U^c$ into
  $K_R\cap U^c$ and $K_R^c$, and then using
  \eqref{comp_gap} and \eqref{tail_gap}, gives
  \begin{align}\label{zero_conc}
    Q_z^{\al,\be}(U^c)
    &\leq
    \frac{|K_R|}{|V|}e^{-4\be\dl}
  +
    \frac{e^{-3\be\dl}}{|V|}
    \int_R^\infty
      \frac{t^{n-1}}{(n-1)!}
      e^{-\be(t-R)/4}\,dt.
  \end{align}
  The remaining integral has the explicit expansion
  \begin{align}  \notag   
    \int_R^\infty
      \frac{t^{n-1}}{(n-1)!}
      e^{-\be(t-R)/4}\,dt
    =
    \sum_{j=0}^{n-1}
      \frac{R^{n-1-j}}{(n-1-j)!}
      \left(\frac4\be\right)^{j+1}.
  \end{align}
  Hence both terms in \eqref{zero_conc} tend to
  zero.  Every open neighborhood of $\Gamma$ therefore receives
  asymptotic mass one, which is equivalent to
  \eqref{Q_zero}.
\end{proof}

Weak convergence can be applied to the indicator functions in the
main identity only after verifying that their boundaries carry no mass
under the limiting Dirac measures.  We record the two facts needed
below.

\begin{lemma}[\textsf{Continuity sets for the BLPP geodesics}]
  \label{cont_sets}
  Fix $n\in\Z_{>0}$, $\al\in\R$, and $x,\wt x\in\R$ with
  $x\neq\wt x$.  Then
  \begin{align}\label{exit_atom}
    \PP\bigl(\Gamma_0^{\al}(n)=-x\bigr)=0.
  \end{align}
  Moreover, if
  \[
    A_{\wt x,x}
    =\bigl\{(\wt\ga,\ga):
      \tau_{\wt x,x}(\wt\ga,\ga)\leq n-1
    \bigr\},
  \]
  then
  \begin{align}\label{coal_bd0}
    \PP\bigl(
      (\Gamma_{\wt x}^{\al},\Gamma_x^{\al})
      \in\partial A_{\wt x,x}
    \bigr)=0.
  \end{align}
\end{lemma}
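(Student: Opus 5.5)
The plan is to treat the two claims separately. Both rest on one principle: a BLPP geodesic a.s. does not sit at a prescribed location, and a.s. has no degenerate configuration. The tools are the uniqueness Lemma~\ref{geo_uniq}, the coercive bound \eqref{geo_coer}, and the fact that a nondegenerate Brownian motion with drift a.s. has no one-sided local maximum at a fixed deterministic time.

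\emph{Step 1: \eqref{exit_atom}.} If $x>0$ there is nothing to prove, because $\Gamma_0^{\al}(n)\ge 0>-x$. So assume $t_0=-x\ge0$ and work on the event $\{\Gamma_0^{\al}(n)=t_0\}$. Let $j$ be the smallest index with $\Gamma_0^{\al}(j)=t_0$.
\begin{itemize}
\item Suppose $j\ge1$. Move the coincident coordinates $\ga_j=\cdots=\ga_n=t$ together in a small neighborhood of $t_0$; this stays in $\overline{\DN}$ because $\Gamma_0^{\al}(j-1)<t_0$. The only $t$-dependent part of $\cH_0^{\al}$ is then
\[
B_{j-1}(t)+\al B_n(t)-t,
\]
since the intermediate levels contribute zero increments. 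This is a Brownian motion with drift and variance parameter $1+\al^2\cdot\ind_{\{j\le n-1\}}+\dots>0$ (in every case at least $1$). Maximality of $\Gamma_0^{\al}$ forces this process to have a local maximum at the deterministic time $t_0$, which has probability zero.
\item Suppose $j=0$, so $t_0=0$ and $\Gamma_0^{\al}\equiv0$. The same perturbation is now only one-sided, $t\downarrow0$. Since $B_0(t)+\al B_n(t)-t$ a.s. takes positive values for arbitrarily small $t>0$, the zero path is not the maximizer.
\end{itemize}

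\emph{Step 2: reduction of \eqref{coal_bd0}.} The event $A_{\wt x,x}$ is defined through finitely many strict inequalities between coordinates of the form $\wt x+\wt\ga_{k+1}$ and $x+\ga_k$ (or the reverse, depending on the order of $x$ and $\wt x$). Its boundary is therefore contained in the finite union of equality hyperplanes
\[
\{\wt x+\wt\ga_{k+1}=x+\ga_k\},\qquad 0\le k\le n-1 .
\]
Intersecting with $\overline{\DN}^2$ may add the faces where coordinates coincide, and these are handled together with the degenerate segments below. So it suffices to show that, for each $k$, a.s.
\[
\wt x+\Gamma_{\wt x}^{\al}(k+1)\neq x+\Gamma_x^{\al}(k).
\]

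\emph{Step 3: swapping argument.} On the bad event, the $\wt x$-geodesic ends its level-$k$ segment exactly at the point $p$ where the $x$-geodesic arrives on level $k$. As in the switching construction of Lemma~\ref{quen_lem}, form the swapped pair:
\begin{itemize}
\item $\wt x$'s prefix up to $p$ on level $k$, followed by $x$'s continuation from $p$;
\item $x$'s prefix up to $p$, followed by $\wt x$'s continuation from level $k+1$.
\end{itemize}
Both are admissible paths. The combined weight $\cH_{\wt x}^{\al}+\cH_x^{\al}$ is unchanged, because the same horizontal increments and the same terminal terms are merely redistributed. Each geodesic is the unique maximizer of its own functional (Lemma~\ref{geo_uniq}), so the swapped paths must coincide with the originals. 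Comparing coordinates, this forces a degenerate segment $\Gamma_x^{\al}(k)=\Gamma_x^{\al}(k+1)$, or its analogue for $\wt x$.

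\emph{Step 4: excluding degenerate segments (main obstacle).} It remains to show that a.s. neither geodesic has two equal consecutive coordinates, i.e. the maximizer does not lie on a face of $\overline{\DN}$. This is the step I expect to be hardest, because the location of the coincidence is random rather than fixed, so Step 1 does not apply directly.

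I would first try the Kim--Pollard criterion \cite[Lemma~2.6]{KP90} in the form used in Lemma~\ref{geo_uniq}. Compare the supremum of $\cH_z^{\al}$ over each face $\{\ga_k=\ga_{k+1}\}$ with the supremum over the interior region. The increment variance between a face point and a nearby interior point is strictly positive, since level $k$ contributes $\|\ind_{I_k(\ga)}-\ind_{I_k(\eta)}\|_{L^2}^2>0$. From this I would aim to deduce that the two suprema a.s. differ, and hence that the global maximizer is a.s. off every face.

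If that fails, the fallback is a local argument. At a degenerate point $s$, perturb $\ga_{k+1}$ to the right and $\ga_k$ to the left. This forces $s$ to be simultaneously a one-sided local maximum of $t\mapsto B_k(t)-B_{k+1}(t)$ from the right and of $t\mapsto B_{k-1}(t)-B_k(t)$ from the left. Conditioning on the levels other than $k$ then makes this event null, by the same fixed-time property of Brownian motion used in Step 1.
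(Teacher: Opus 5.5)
Your Steps 1--3 are sound. Step 1 is in fact cleaner than the paper's argument: by moving the whole coincident block $\ga_j=\cdots=\ga_n$ together, you need no information about degenerate segments, whereas the paper already needs it for \eqref{exit_atom}. Step 3 is the paper's exchange of continuations at the meeting point, and using Lemma~\ref{geo_uniq} to transfer the degeneracy to $\Gamma_x^{\al}$ itself is fine. For $k=0$ the forced degeneracy $\Gamma_x^{\al}(1)=0$ sits at the deterministic point $x$, so a one-sided version of your Step 1 argument disposes of it.

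The gap is Step 4 for $k\ge1$. There you must show that a.s.\ $\Gamma_x^{\al}$ has no zero-length horizontal segment on an interior level, i.e.\ no two consecutive vertical edges at a \emph{random} location. Neither of your routes does this.

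Route (a) cannot work. Kim--Pollard's criterion \cite{KP90} separates suprema over sets at positive distance and yields only uniqueness of the maximizer. It says nothing about whether that unique maximizer lies on a face. Since the face $\{\ga_k=\ga_{k+1}\}$ lies in the closure of the interior, the supremum over the interior always equals $L_x^{\al}$. So ``the two suprema a.s.\ differ'' is just a restatement of the claim. Positivity of increment variances is also fully compatible with boundary maxima: $Z(t)=\xi t$ on $[0,1]$ with $\xi\sim N(0,1)$ has $\Var(Z(s)-Z(t))=(s-t)^2>0$, yet its maximum is at $t=0$ with probability $1/2$.

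Route (b) correctly reduces the problem to excluding a point $s$ that is simultaneously a right-sided local maximum of $B_k-B_{k+1}$ and a left-sided local maximum of $B_{k-1}-B_k$. The last step, however, fails for two reasons:
\begin{enumerate}
\item Conditioning on the levels other than $k$ does not make $s$ deterministic, since $s$ also depends on $B_k$. So there is no fixed time to which the Brownian fixed-time property could be applied.
\item The set of one-sided local maxima of a Brownian motion is uncountable, of Hausdorff dimension $\tfrac12$. For rational $q$, the set $\{s<q:\,W(s)=\max_{[s,q]}W\}$ is the zero set of a reflected Brownian motion run backward from $q$.
\end{enumerate}
What you actually need is that two such random sets are a.s.\ disjoint, where the sets are built from correlated Brownian motions, one forward and one backward in time. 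Dimension counting gives $\tfrac12+\tfrac12=1$, the critical case, so neither a fixed-time argument nor a naive first-moment covering argument settles it.

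This absence of two consecutive vertical edges along BLPP geodesics is exactly the nontrivial input the paper imports from \cite[Proposition~5.1]{RS26}. Without that citation, or a genuine proof of the fact, your Step 4, and with it \eqref{coal_bd0}, remains unproved.
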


\begin{proof}
  For
  $z\in\R$, write
  \[
    T_i(z)=z+\Gamma_z^{\al}(i),
    \qquad 0\leq i\leq n,
  \]
  with $T_0(z)=z$.  Every finite segment of the maximizing path
  $\Gamma_z^{\al}$ is a point-to-point BLPP geodesic: otherwise one
  could replace that segment by a path with larger weight and thereby
  increase $\cH_z^{\al}$.  Consequently,
  \cite[Proposition~5.1]{RS26} applies to these segments and rules out
  two consecutive vertical edges along $\Gamma_z^{\al}$.

  We claim that, for every deterministic $c\in\R$ and
  $1\leq r\leq n$,
  \begin{align}\label{fixed_jump}
    \PP\bigl(T_r(z)=c\bigr)=0.
  \end{align}
  First suppose that $n \geq 2$ and $1\leq r\leq n-1$.  If the neighboring jump
  locations are fixed and $t$ is varied in the interval
  $[T_{r-1}(z),T_{r+1}(z)]$, then the only part of the passage value that
  depends on $t$ is
  \[
    B_{r-1}(t)-B_r(t).
  \]
  Hence $T_r(z)$ maximizes this Brownian motion on the interval between
  its two neighboring jump locations. 
    On the event $\{T_r(z)=c\}$,
    we have
  \[
    T_{r-1}(z)\leq c\leq T_{r+1}(z).
  \]
  Except when $r=1$ and $c=z$, both inequalities are in fact
  strict.  Indeed, if $r\geq2$ and
  $T_{r-1}(z)=T_r(z)=c$, then the horizontal segment on level
  $r-1$ has zero length, and the maximizing path has two consecutive
  vertical edges.  Similarly,
  $T_r(z)=T_{r+1}(z)=c$ would give two consecutive vertical edges
  at the next pair of levels.  Both possibilities are excluded by
  the preceding paragraph.  The only exception is
  $r=1$ and $c=z$, since $T_0(z)=z$ is the prescribed starting
  point and there is no vertical edge below level $0$.

  Consider first the nonboundary case.  On $\{T_r(z)=c\}$ we then
  have
  $T_{r-1}(z)<c<T_{r+1}(z).$
  By density of the rationals, there exist $a,b\in\mathbb{Q}$ such that
  \[
    T_{r-1}(z)<a<c<b<T_{r+1}(z).
  \]
  Since $c=T_r(z)$ maximizes
  $B_{r-1}-B_r$ on
  $[T_{r-1}(z),T_{r+1}(z)]$, it also maximizes this process on
  the smaller interval $[a,b]$.  Consequently,
  \begin{align}\label{bm1}
    \{T_r(z)=c\}
    \subset
\bigcup_{\substack{a,b\in\mathbb{Q}\\a<c<b}}
    \Big\{
      c\in
      \operatorname*{argmax}_{t\in[a,b]}
      \bigl(B_{r-1}(t)-B_r(t)\bigr)
    \Big\},
  \end{align}
  up to the exceptional case $r=1$, $c=z$.

  In that exceptional case, choose $b\in\mathbb{Q}$ such that
  \[
    c<b<T_2(z).
  \]
  Since $T_0(z)=T_1(z)=c$, the point $c$ is a maximizer of
  $B_0-B_1$ on $[c,b]$.  Thus this case is contained in the
  countable union
\begin{equation}\label{bm2}
\bigcup_{\substack{b\in\mathbb{Q}\\b>c}}
    \Big\{
      c\in
      \operatorname*{argmax}_{t\in[c,b]}
      \bigl(B_0(t)-B_1(t)\bigr)
    \Big\}.
\end{equation}

Inside each of the argmax above in \eqref{bm1} and \eqref{bm2},
  $B_{r-1}-B_r$ is a Brownian motion (with variance parameter $2$).
In particular, the location of its maximum follows the arc-sine law, 
$$ \mathbb P\Big(\textup{argmax}_{t\in[a,b]}
      \bigl(B_{r-1}(t)-B_r(t)\bigr)\in dx\Big) = \frac{1}{\pi\sqrt{(x-a)(b-x)}}\ind_{(a,b)}(x)dx,$$
which is a continuous distribution. 
Hence every event in the two
  countable unions has probability zero.  This proves
  \eqref{fixed_jump} for $r<n$.

  For $r=n$, once the preceding coordinates are fixed, varying the
  terminal coordinate $t\geq T_{n-1}(z)$ changes the passage value only
  through
  \[
    B_{n-1}(t)+\al B_n(t)-t.
  \]
  This is a Brownian motion with variance parameter
  $1+\al^2$ and drift $-1$.  If $T_{n-1}(z)<c$, choose a rational
  $a$ with $T_{n-1}(z)<a<c$; then $c$ must be the maximizer of this
  drifted Brownian motion on $[a,\infty)$.  If
  $T_{n-1}(z)=c$, two consecutive vertical edges are present unless
  $n=1$ and $c=z$; in the latter case use the interval
  $[c,\infty)$.  In all cases, the location of the maximum has a
  continuous distribution, again by \cite{LP18}.  This proves
  \eqref{fixed_jump} for $r=n$.  Taking $z=0$, $r=n$, and $c=-x$
  gives \eqref{exit_atom}.

\begin{figure}[t]
  \centering

  \begin{tikzpicture}[
    x=.95cm,
    y=.78cm,
    >=Stealth,
    font=\small,
    bluegeo/.style={blue,line width=1.1pt},
    redgeo/.style={red,line width=1.1pt},
    levline/.style={gray!55,dashed,line width=.55pt}
  ]

    \begin{scope}

      \foreach \yy in {0,1,2,3}{
        \draw[levline] (0,\yy) -- (5.35,\yy);
      }
      \node[right] at (5.42,0) {$y=0$};
      \node[right] at (5.42,3) {$y=n$};
      \node at (2.70,3.55) {original geodesics};

      \draw[bluegeo]
        (.55,0) -- (1.15,0) -- (1.15,1)
        -- (3.00,1) -- (3.00,2)
        -- (3.75,2) -- (3.75,3);

      \draw[redgeo]
        (2.15,0) -- (3.00,0) -- (3.00,1)
        -- (4.05,1) -- (4.05,2)
        -- (4.80,2) -- (4.80,3);

      \fill[blue] (.55,0) circle (1.65pt);
      \fill[blue] (1.15,1) circle (1.65pt);
      \fill[blue] (3.00,2) circle (1.65pt);
      \fill[blue] (3.75,3) circle (1.65pt);

      \draw[blue,fill=white,line width=.9pt]
        (1.15,0) circle (2.0pt);
      \draw[blue,fill=white,line width=.9pt]
        (3.75,2) circle (2.0pt);

      \fill[red] (2.15,0) circle (1.65pt);
      \fill[red] (4.05,2) circle (1.65pt);
      \fill[red] (4.80,3) circle (1.65pt);

      \draw[red,fill=white,line width=.9pt]
        (3.00,0) circle (2.0pt);
      \draw[red,fill=white,line width=.9pt]
        (4.05,1) circle (2.0pt);
      \draw[red,fill=white,line width=.9pt]
        (4.80,2) circle (2.0pt);

      %
      %
      \fill[red] (3.00,1) circle (1.65pt);
      \draw[blue,line width=.9pt]
        (3.00,1) circle (2.45pt);

      \node[below] at (.55,0) {$\wt x$};
      \node[below] at (2.15,0) {$x$};

      \node[
        blue,
        font=\scriptsize,
        anchor=south east
      ] at (2.92,2.08)
        {$\wt x+\wt\ga_r=q$};

      \node[
        red,
        font=\scriptsize,
        anchor=north west
      ] at (3.08,.92)
        {$x+\ga_{r-1}=q$};

      \draw[gray!55,densely dotted]
        (3.00,-.05) -- (3.00,2.95);

    \end{scope}

    \begin{scope}[xshift=7.15cm]

      \foreach \yy in {0,1,2,3}{
        \draw[levline] (0,\yy) -- (5.35,\yy);
      }
      \node[right] at (5.42,0) {$y=0$};
      \node[right] at (5.42,3) {$y=n$};
      \node at (2.70,3.55) {exchanged continuations};

      %
      \draw[bluegeo]
        (.55,0) -- (1.15,0) -- (1.15,1)
        -- (4.05,1) -- (4.05,2)
        -- (4.80,2) -- (4.80,3);

      %
      \draw[redgeo]
        (2.15,0) -- (3.00,0) -- (3.00,1);

      \draw[redgeo,densely dashed]
        (3.00,1) -- (3.00,2);

      \draw[redgeo]
        (3.00,2) -- (3.75,2) -- (3.75,3);

      \fill[blue] (.55,0) circle (1.65pt);
      \fill[blue] (1.15,1) circle (1.65pt);
      \fill[blue] (4.05,2) circle (1.65pt);
      \fill[blue] (4.80,3) circle (1.65pt);

      \draw[blue,fill=white,line width=.9pt]
        (1.15,0) circle (2.0pt);
      \draw[blue,fill=white,line width=.9pt]
        (4.05,1) circle (2.0pt);
      \draw[blue,fill=white,line width=.9pt]
        (4.80,2) circle (2.0pt);

      \fill[red] (2.15,0) circle (1.65pt);
      \fill[red] (3.00,2) circle (1.65pt);
      \fill[red] (3.75,3) circle (1.65pt);

      \draw[red,fill=white,line width=.9pt]
        (3.00,0) circle (2.0pt);
      \draw[red,fill=white,line width=.9pt]
        (3.75,2) circle (2.0pt);

      \fill[red] (3.00,1) circle (1.65pt);
      \draw[red,line width=.9pt]
        (3.00,1) circle (2.45pt);

      \node[below] at (.55,0) {$\wt x$};
      \node[below] at (2.15,0) {$x$};
      \node[red,anchor=west] at (3.12,1.05) {$q$};

      \node[
        red,
        font=\scriptsize,
        align=center
      ] at (1.90,2.48)
        {two consecutive\\vertical jumps};

      \draw[red,->,line width=.7pt]
        (2.48,2.27) -- (2.94,1.72);

      \draw[gray!55,densely dotted]
        (3.00,-.05) -- (3.00,2.95);

    \end{scope}

  \end{tikzpicture}

  \captionsetup{width=.90\linewidth}
  \caption{As shown on the left, the two BLPP geodesics meet on the event
    $\{\wt x+\wt\ga_r=x+\ga_{r-1}\}$.
    In this case, the red and blue paths on the right-hand side are
    also geodesics.  In particular, the red geodesic has two
    consecutive vertical jumps.
  }
  \label{fig_BLPP}
\end{figure}

  We now prove the continuity-set statement for the meeting event. This part of the argument is illustrated in Figure \ref{fig_BLPP}. 
  Assume for definiteness that $\wt x<x$.  From the definition of
  $\tau$,
  \[
    A_{\wt x,x}
    =\bigcup_{k=0}^{n-1}
      \bigg[
        \bigcap_{r=1}^{k}
          \{\wt x+\wt\ga_r<x+\ga_{r-1}\}
        \cap
          \{\wt x+\wt\ga_{k+1}\geq x+\ga_k\}
      \bigg].
  \]
  Consequently,
  \begin{align}\label{coal_inc}
    \partial A_{\wt x,x}
    \subset
    \bigcup_{r=1}^{n}
      \{\wt x+\wt\ga_r=x+\ga_{r-1}\}.
  \end{align}
Set
  \[
    \wt t_i=\wt x+\Gamma_{\wt x}^{\al}(i),
    \qquad
    t_i=x+\Gamma_x^{\al}(i).
  \]
  The event in \eqref{coal_inc} with $r=1$ is
  $\{\wt t_1=x\}$ and has probability zero by
  \eqref{fixed_jump}.  Fix $2\leq r\leq n$ and suppose, toward a
  contradiction, that
  \[
    \wt t_r=t_{r-1}=q.
  \]
  For a nondecreasing continuation
  $q\leq u_r\leq\cdots\leq u_n$, define its future score from
  $(q,r-1)$ by
  \begin{align*}
    \cS_{r-1,q}(u_r,\ldots,u_n)
    ={}&
    B_{r-1}(u_r)-B_{r-1}(q)
    \\
    &+\sum_{i=r}^{n-1}
      \bigl(B_i(u_{i+1})-B_i(u_i)\bigr)
    +\al B_n(u_n)-u_n.
  \end{align*}
  Splitting each passage values at $(q,r-1)$ gives
  \begin{align*}
    \cH_{\wt x}^{\al}(\Gamma_{\wt x}^{\al})
      &=C_{\wt x}
        +\cS_{r-1,q}(\wt t_r,\ldots,\wt t_n),
    \\
    \cH_x^{\al}(\Gamma_x^{\al})
      &=C_x
        +\cS_{r-1,q}(t_r,\ldots,t_n),
  \end{align*}
  where $C_{\wt x}$ and $C_x$ contain only the respective prefixes up
  to $(q,r-1)$.  Exchanging the two continuations produces admissible
  point-to-line paths from both starting points.  Optimality of the
  original two paths therefore gives
  \begin{align*}
    \cS_{r-1,q}(\wt t_r,\ldots,\wt t_n)
      &\geq
      \cS_{r-1,q}(t_r,\ldots,t_n),
    \\
    \cS_{r-1,q}(t_r,\ldots,t_n)
      &\geq
      \cS_{r-1,q}(\wt t_r,\ldots,\wt t_n).
  \end{align*}
  Hence equality holds, and the path from $(x,0)$ obtained by keeping
  its prefix and attaching the continuation of
  $\Gamma_{\wt x}^{\al}$ is also maximizing.  This new path has two
  consecutive vertical edges at the same location $q$, since its
  $(r-1)$st and $r$th jump locations are both $q$.  This contradicts
  \cite[Proposition~5.1]{RS26}.  Thus every event in the finite union
  on the right-hand side of \eqref{coal_inc} has probability zero,
  which proves \eqref{coal_bd0}.  The case $\wt x>x$ follows by
  interchanging the two paths.
\end{proof}

\begin{lemma}[\textsf{Zero-temperature convergence of the free energy}]
  \label{FE_blpp}
  Fix $n\in\Z_{>0}$, $\al\in\R$, and a compact interval
  $J\subset\R$.  For every $p<\infty$,
  \begin{align}\label{unif_Lp0}
    \EE\Big[
      \sup_{u\in J}
      |F_u^{\al,\be}-L_u^{\al}|^p
    \Big]
    \longrightarrow0
    \qquad\text{as }\be\to\infty.
  \end{align}
  In particular, for fixed $u,v\in\R$,
  \[
    \EE\bigl[F_u^{\al,\be}F_v^{\al,\be}\bigr]
    \longrightarrow
    \EE\bigl[L_u^{\al}L_v^{\al}\bigr].
  \]
\end{lemma}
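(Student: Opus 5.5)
We prove \eqref{unif_Lp0} by a two-sided sandwich of $F_u^{\al,\be}-L_u^{\al}$, uniformly in $u\in J$, with a random error of order $\be^{-1}\log\be$ times a polynomial in quantities with all moments. By \eqref{lap_ZQ}, $F_u^{\al,\be}=\be^{-1}\log\int_{\DN}e^{\be\cH_u^{\al}(\ga)}\,d\ga$.

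\emph{Upper bound.} Split $\DN$ into $\{\ga_n\le R\}$ and $\{\ga_n>R\}$. On the first set we use $\cH_u^{\al}\le L_u^{\al}$ and $|\{\ga\in\DN:\ga_n\le R\}|=R^n/n!$. On the second we use a uniform version of the coercive bound \eqref{geo_coer}. By Lemma~\ref{B_poly}, applied to the shifted Brownian motions, we have $\sup_{u\in J}\cH_u^{\al}(\ga)\le C_\omega(1+\ga_n^\theta)-\ga_n$ with $C_\omega$ having all moments. Hence there is a random $R_\omega$, polynomial in $C_\omega$, such that $\cH_u^{\al}(\ga)\le L_u^{\al}-\ga_n/2$ for $\ga_n\ge R_\omega$. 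Here we use $L_u^{\al}\ge\cH_u^{\al}(0)=\al B_n(u)$, which is bounded below on $J$ by a variable with all moments. This gives
\[
F_u^{\al,\be}\le L_u^{\al}+\be^{-1}\log\bigl(R_\omega^n/n!+C\be^{-n}\bigr).
\]

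\emph{Lower bound.} Let $\Gamma=\Gamma_u^{\al}$ be the maximizer. For $\ga$ in a small box of side $h$ near $\Gamma$ inside $\DN$, we have $\cH_u^{\al}(\ga)\ge L_u^{\al}-\omega_J(h)$. Here $\omega_J(h)$ is a modulus of continuity of the Brownian motions $B_0,\dots,B_n$ on the random window $J+[0,R_\omega]$, plus $h$ from the drift. The box must be shifted into the open simplex, so we take $\ga_i=\Gamma(i)+ih'$ with a small perturbation. Lévy's modulus gives $\omega_J(h)\le K_\omega\sqrt{h\log(1/h)}$ with $K_\omega$ having all moments on a compact window. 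Choosing $h=\be^{-2}$ yields
\[
F_u^{\al,\be}\ge L_u^{\al}-K_\omega\be^{-1}\sqrt{\log\be}-n\be^{-1}\log(\be^2/c).
\]

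\emph{Conclusion.} Combining the two bounds, $\sup_{u\in J}|F_u^{\al,\be}-L_u^{\al}|\le\be^{-1}\,\Theta_\omega\log\be$ for $\be\ge2$, where $\Theta_\omega$ is a polynomial in $C_\omega$, $K_\omega$, $\log(1+R_\omega)$ and $\sup_J|B_n|$, and so has all moments. This gives \eqref{unif_Lp0} directly. The product statement then follows by Cauchy--Schwarz, once we know $\sup_{\be\ge1}\EE|F_u^{\al,\be}|^2<\infty$ and $\EE|L_u^{\al}|^2<\infty$. Both follow from the same sandwich, since $|L_u^{\al}|$ is dominated by $|\al B_n(u)|+C_\omega(1+R_\omega^\theta)$.

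The main obstacle is the uniformity in $u$ together with integrability: all random constants ($C_\omega$, $R_\omega$, $K_\omega$) must have finite moments of every order. I would first derive these from Lemma~\ref{B_poly} through Gaussian tail estimates for $\sup_{t}|B(t)|/(1+|t|)^\theta$. If that is not explicit there, I would prove it via a union bound over dyadic blocks.
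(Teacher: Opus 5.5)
Your argument is correct, but it follows a different route from the paper's proof.

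\textbf{The paper's route.} The paper proves only qualitative almost-sure convergence $\sup_{u\in J}|F_u^{\al,\be}-L_u^{\al}|\to0$. It confines all maximizers to one random compact simplex $K_R$. It then uses qualitative uniform continuity of $(u,\ga)\mapsto\cH_u(\ga)$ on $J\times K_R$, together with a volume lower bound $c_{R,r}$ for $B(\ga,r)\cap K_R\cap\DN$, and shows that the tail $\{\ga_n>R\}$ is negligible relative to the bulk. Only afterwards does it upgrade to $L^p$ convergence, by dominated convergence with a separately constructed majorant $Y_J'$. That majorant uses Young's inequality on the upper side and a fixed deterministic open set $U_0\Subset\DN$ on the lower side.

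\textbf{Your route.} You prove a quantitative bound $\sup_{u\in J}|F_u^{\al,\be}-L_u^{\al}|\le\Theta_\omega\,\be^{-1}\log\be$ with $\Theta_\omega$ having all moments. This gives \eqref{unif_Lp0} in one step, adds an explicit rate, and needs no dominated-convergence step. Your lower bound $L_u^{\al}\ge\cH_u^{\al}(0)=\al B_n(u)$ is also simpler than the paper's use of $m_J=\inf_J L$, which needs continuity of $L$ first.

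\textbf{What you still need to write out.} The price of your route is a quantitative modulus of continuity, with an integrable random constant, on the random window $J+[0,R_\omega+1]$; the paper needs only qualitative continuity. One way to do this:
\begin{itemize}
\item Let $K_k$ be the L\'evy-type constant on the deterministic window $[-2^k,2^k]$, obtained by covering it with $O(2^k)$ overlapping unit intervals.
\item A union bound shows that $\sup_k K_k/\sqrt{1+k}$ has Gaussian tails.
\item Hence the constant on your window is at most a moment-bearing variable times $\sqrt{\log(2+R_\omega)}$. This matches the $\log(1+R_\omega)$ you already included in $\Theta_\omega$.
\end{itemize}
If you only want convergence and not the $\be^{-1}\log\be$ rate, any H\"older modulus $h^{1/2-\delta}$ would do.

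\textbf{Minor points, all fine.}
\begin{itemize}
\item Your box construction works, for example $\ga_i\in(\Gamma(i)+2ih,\Gamma(i)+(2i+1)h)$, which lies in $\DN$.
\item Lemma~\ref{B_poly} applies directly to give the coercive bound uniformly in $u\in J$; no shifting is needed.
\item The moment bounds on $F$ and $L$ needed for Cauchy--Schwarz in the product statement follow from your sandwich as you say.
\end{itemize}
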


\begin{proof}
  Fix a Brownian realization for which
  Lemma~\ref{B_poly} holds.  Write
  \[
    \cH_u(\ga)=\cH_u^{\al}(\ga),
    \qquad
    L(u)=L_u^{\al},
\AND    K_R=\{\ga\in\overline{\DN}:\ga_n\leq R\}.
  \]
  Uniformly in $u\in J$,
  \begin{align}\label{unif_coer}
    \cH_u(\ga)
    \leq C_JM_\theta(1+\ga_n^\theta)-\ga_n.
  \end{align}
  Hence all maximizers with $u\in J$ lie in one random compact simplex
  $K_R$ once $R$ is sufficiently large.

  We first prove a uniform Laplace principle on a fixed $K_R$.  Set
  \[
    L_R(u)=\max_{\ga\in K_R}\cH_u(\ga).
  \]
  The map $(u,\ga)\mapsto\cH_u(\ga)$ is uniformly continuous on
  $J\times K_R$.  Given $\eta>0$, choose $r>0$ so that
  \[
    |\cH_u(\ga)-\cH_u(\eta')|\leq\eta
  \]
  whenever $u\in J$, $\ga,\eta'\in K_R$, and
  $|\ga-\eta'|\leq r$.  Since $K_R$ is a compact simplex, there exists
  $c_{R,r}>0$ such that
  \begin{align}
    |B(\ga,r)\cap K_R\cap\DN|
    \geq c_{R,r},
    \qquad \ga\in K_R.
  \end{align}
  If $\ga_u$ maximizes $\cH_u$ on $K_R$, then
  \begin{align}\label{low_B13}
    c_{R,r}e^{\be(L_R(u)-\eta)}
    &\leq
    \int_{K_R\cap\DN}
      e^{\be\cH_u(\ga)}\,d\ga
    \leq
    |K_R|e^{\be L_R(u)}.
  \end{align}
  Taking logarithms gives
  \begin{align}\label{comp_lap}
    \sup_{u\in J}
    \left|
      \frac1\be\log
      \int_{K_R\cap\DN}
        e^{\be\cH_u(\ga)}\,d\ga
      -L_R(u)
    \right|
    \longrightarrow0.
  \end{align}

  We next control the unbounded part of the simplex.  The common
  compactness of all maximizers and uniform continuity on
  $J\times K_R$ show that $u\mapsto L(u)$ is continuous.  Put
  $m_J=\inf_{u\in J}L(u)$.  By increasing $R$, the argument used for
  \eqref{tail_gap} gives
  \begin{align}
    \sup_{u\in J}\cH_u(\ga)
    \leq m_J-2-\frac14(\ga_n-R),
    \qquad \ga_n\geq R.
  \end{align}
  For this choice of $R$, $L_R(u)=L(u)$ for every $u\in J$, and
  \begin{align*}
    \int_{K_R^c\cap\DN}
      e^{\be\cH_u(\ga)}\,d\ga
    &\leq
    e^{\be(m_J-2)}
    \int_R^\infty
      \frac{t^{n-1}}{(n-1)!}
      e^{-\be(t-R)/4}\,dt.
  \end{align*}
  On the other hand, \eqref{low_B13} with $\eta=1/2$ gives, uniformly
  in $u\in J$,
  \[
    \int_{K_R\cap\DN}
      e^{\be\cH_u(\ga)}\,d\ga
    \geq
    c_{R,r}e^{\be(m_J- \frac12)}.
  \]
  Therefore, for $\be\geq1$,
  \begin{align*}
    \sup_{u\in J}
    \frac{
      \int_{K_R^c\cap\DN}e^{\be\cH_u(\ga)}\,d\ga
    }{
      \int_{K_R\cap\DN}e^{\be\cH_u(\ga)}\,d\ga
    }
    &\leq C_{R,n}e^{-3\be/2}
    \longrightarrow0.
  \end{align*}
  Combining this estimate with
  \eqref{comp_lap} and
  \eqref{lap_ZQ} yields
  \begin{align}\label{as_zero}
    \sup_{u\in J}
    |F_u^{\al,\be}-L_u^{\al}|
    \longrightarrow0
    \qquad\text{almost surely}.
  \end{align}

  It remains to obtain an integrable domination uniform in
  $\be\geq1$.  Young's inequality applied to
  \eqref{unif_coer} gives
  \begin{align}
    \sup_{u\in J}\cH_u(\ga)
    \leq Y_J-\frac12\ga_n,
    \quad {\rm with}
    \quad
    Y_J=C\bigl(1+M_\theta^{1/(1-\theta)}\bigr).
  \end{align}
  Hence
  \begin{align*}
    \sup_{u\in J}L(u)
      \leq Y_J
    \AND
    Z_u^{\al,\be}
      \leq
      e^{\be Y_J}
      \int_{\DN}e^{-\be\ga_n/2}\,d\ga
      =e^{\be Y_J}\left(\frac2\be\right)^n.
  \end{align*}
  For a lower bound, fix a deterministic bounded open set
  $U_0\Subset\DN$ of positive Lebesgue measure.  Then
  \begin{align*}
    F_u^{\al,\be}
      \geq
      \inf_{\ga\in U_0}\cH_u(\ga)
      +\frac1\be\log|U_0|
    \AND
    L(u)
      \geq
      \inf_{\ga\in U_0}\cH_u(\ga).
  \end{align*}
  Since $J\times\overline{U_0}$ is compact, the absolute value of the
  last infimum is bounded by $CM_\theta$.  It follows that there exists
  a random variable $Y_J'$ with moments of every finite order such that
  \begin{align}
    \sup_{\be\geq1}\sup_{u\in J}
    \bigl(
      |F_u^{\al,\be}|+|L_u^{\al}|
    \bigr)
    \leq Y_J'.
  \end{align}
  Dominated convergence applied to \eqref{as_zero} proves
  \eqref{unif_Lp0}.  The convergence of mixed second
  moments follows from Cauchy--Schwarz and the cases $p=2$ and $p=4$.
\end{proof}

The next elementary stability fact will be used to prove continuity of
the limiting right-hand side.

\begin{lemma}[\textsf{Stability of a unique geodesic}]
  \label{geo_stab}
  Let $z_m\to z$.  On the event that
  $\Gamma_{z_m}^{\al}$ and $\Gamma_z^{\al}$ are unique for all $m$,
  $
    \Gamma_{z_m}^{\al}
    \longrightarrow
    \Gamma_z^{\al}$
    in $\overline{\DN}$.
\end{lemma}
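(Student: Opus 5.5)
The plan is a compactness-plus-uniqueness argument carried out on a fixed Brownian realization. We work on the event where Lemma~\ref{B_poly} holds, so that the uniform coercive bound \eqref{unif_coer} is available on the compact interval $J$ containing $z$ and all the $z_m$. We also assume that the maximizers $\Gamma_{z_m}^{\al}$ and $\Gamma_z^{\al}$ are unique.

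\emph{Step 1: tightness.} By \eqref{unif_coer}, for a random but finite $R$ we have
\[
  \sup_{u\in J}\cH_u^{\al}(\ga) < \inf_{u\in J} L_u^{\al}
  \qquad\text{whenever } \ga_n > R .
\]
Here $\inf_J L$ is finite, for instance by the lower bound via a fixed open set $U_0$ as in the proof of Lemma~\ref{FE_blpp}. Consequently every $\Gamma_{z_m}^{\al}$ lies in the compact simplex $K_R$, and it suffices to show that every subsequential limit of $(\Gamma_{z_m}^{\al})$ equals $\Gamma_z^{\al}$.

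\emph{Step 2: identify limit points.} Let $\Gamma_{z_{m_k}}^{\al}\to\ga^*\in K_R$. The map $(u,\ga)\mapsto\cH_u^{\al}(\ga)$ is jointly uniformly continuous on $J\times K_R$, because the Brownian paths are continuous on compacts. Hence
\[
  \cH_z^{\al}(\ga^*)=\lim_k \cH_{z_{m_k}}^{\al}\bigl(\Gamma_{z_{m_k}}^{\al}\bigr)=\lim_k L_{z_{m_k}}^{\al}.
\]
Moreover $L_u^{\al}=\max_{K_R}\cH_u^{\al}$ is continuous in $u\in J$, since it is a maximum of a jointly uniformly continuous function over a fixed compact set. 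This gives $\lim_k L_{z_{m_k}}^{\al}=L_z^{\al}$, so $\ga^*$ is a maximizer for $z$. By uniqueness, $\ga^*=\Gamma_z^{\al}$. Since the sequence lies in a compact set and has only one subsequential limit, it converges to $\Gamma_z^{\al}$.

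\emph{Main obstacle.} The only delicate point is Step 1: the maximizers must stay in one compact set uniformly in $m$. This needs a lower bound on $L_{z_m}^{\al}$ that is uniform in $m$ together with an upper bound on $\cH$ that is uniform in $u\in J$. Both come from Lemma~\ref{B_poly}, exactly as in the proof of Lemma~\ref{FE_blpp}. Everything after that is routine.
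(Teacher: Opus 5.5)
Your proof is correct and follows the paper's argument. Uniform coercivity on a compact interval confines all maximizers to a common simplex $K_R$, uniform continuity of $(u,\ga)\mapsto\cH_u^{\al}(\ga)$ there shows that every subsequential limit maximizes $\cH_z^{\al}$, and uniqueness then forces convergence; the only cosmetic difference is that you pass through continuity of $u\mapsto L_u^{\al}$, whereas the paper uses uniform convergence $\cH_{z_m}^{\al}\to\cH_z^{\al}$ on that set directly.
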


\begin{proof}
  The coercive estimate is uniform for $z_m$ in a compact interval, so
  all the maximizers lie in a common compact set.  On that set,
  $\cH_{z_m}^{\al}\to\cH_z^{\al}$ uniformly by uniform continuity of
  the Brownian paths.  Every subsequential limit of the maximizers
  therefore maximizes $\cH_z^{\al}$.  Uniqueness at $z$ identifies the
  limit as $\Gamma_z^{\al}$.
\end{proof}

In addition to the quenched law
$Q_z^{\al,\infty}=\delta_{\Gamma_z^{\al}}$ from
\eqref{not_zero}, define
\begin{align}
  \begin{aligned}
    \rmP_{\wt x,x}^{\al,\infty}(A)
      =\EE\left[
        \bigl(   Q_{\wt x}^{\al,\infty}   \otimes Q_x^{\al,\infty}   \bigr)(A)   \right]
    \AND
    \rmP_0^{\al,\infty}(C)
      =\EE\bigl[Q_0^{\al,\infty}(C)\bigr].
  \end{aligned}
\end{align}

\begin{proof}[Proof of Corollary~\ref{blpp_cor}]
  Fix $\wt x\in\R$, and let
  $I\Subset\R\setminus\{\wt x\}$ be an interval containing $x$ in its
  interior.  Define
  \begin{align*}
    G_\be(u)
      &=\EE\bigl[
        F_{\wt x}^{\al,\be}F_u^{\al,\be}
      \bigr]
    \AND
    G(u)
      =\EE\bigl[
        L_{\wt x}^{\al}L_u^{\al}
      \bigr].
  \end{align*}
  By Theorem~\ref{main_thm}, $G_\be'(u)=g_\be(u)$ on $I$, where
  \begin{align*}
    g_\be(u)
    &={}
    \frac{1-\al^2}{2}\sgn(\wt x-u)
      \rmP_{\wt x,u}^{\al,\be}(\tau\leq n-1)
  +
      \al^2\EE\bigl[
        Q_0^{\al,\be}\{\ga_n>-u\}
      \bigr]
      -\frac{\al^2}{2}
        \bigl(1-\sgn(\wt x-u)\bigr).
  \end{align*}
  Lemma~\ref{FE_blpp} gives
  $G_\be(u)\to G(u)$ for every $u\in I$.

  Lemma~\ref{Q_geo} gives almost-sure weak
  convergence of the relevant one- and two-polymer measures.
  Lemma~\ref{cont_sets} and the continuity-set form of
  the Portmanteau theorem \cite[Theorem~2.1]{Billingsley1999} therefore
  imply
  \begin{align*}
    \EE\bigl[Q_0^{\al,\be}\{\ga_n>-u\}\bigr]
      &\longrightarrow
      \rmP_0^{\al,\infty}(\ga_n>-u),
    \\
    \rmP_{\wt x,u}^{\al,\be}(\tau\leq n-1)
      &\longrightarrow
      \rmP_{\wt x,u}^{\al,\infty}(\tau\leq n-1).
  \end{align*}
  Hence $g_\be(u)\to g(u)$ pointwise, where $g$ is the right-hand side
  of the identity in Corollary~\ref{blpp_cor}, with $u$ in place
  of $x$.  The functions $g_\be$ are bounded uniformly in
  $u\in I$ and $\be>0$ by a constant depending only on $\al$.

  For $a,b\in I$, dominated convergence gives
  \[
    G(b)-G(a)
    =\lim_{\be\to\infty}
      \int_a^b g_\be(u)\,du
    =\int_a^b g(u)\,du.
  \]
  Finally, $g$ is continuous on $I$.  The exit probability is
  continuous by \eqref{exit_atom}.  For the coalescence term, let
  $u_m\to u$ and use Lemma~\ref{geo_stab}.  The associated
  meeting indicators converge almost surely unless the limiting pair
  lies in the boundary event from \eqref{coal_bd0}, which
  has probability zero.  Dominated convergence therefore gives
  continuity of the coalescence probability.  It follows that
  $G'(x)=g(x)$, proving the corollary.
\end{proof}

\appendix

\section{Gaussian integration by parts and Malliavin
  derivatives}

This appendix collects the Gaussian and Malliavin-calculus facts used in
Section~\ref{proof_sec}.  We also prove the moment bounds needed to
remove the spatial mollification.  The only growth input is the
almost-sure polynomial growth of finitely many two-sided Brownian motions.

\medskip
\noi
$\bullet$ {\bf The isonormal Gaussian framework.}
Fix $n\geq1$ and let
\begin{align}
  \fH=L^2\bigl(\{0,\dots,n\}\times\R,\fc\otimes dy\bigr),
\end{align}
where $\fc$ is counting measure on $\{0,\dots,n\}$.  Thus
\[
  \langle h,g\rangle_{\fH}
  =\sum_{i=0}^n\int_\R h_i(y)g_i(y)\,dy.
\]
Let $W=\{W(h):h\in\fH\}$ be an isonormal Gaussian process over $\fH$.
Writing $\xi_i$ for the white noise on level $i$, we have
\begin{align}
  W(h)=\sum_{i=0}^n\int_\R h_i(y)\,\xi_i(dy)
  \AND
  W_i(f)=\int_\R f(y)\,\xi_i(dy).
\end{align}
The associated two-sided Brownian motions are
\[
  B_i(t)=
  \begin{cases}
    W_i(\ind_{(0,t]}),&t\geq0,\\
    -W_i(\ind_{(t,0]}),&t<0.
  \end{cases}
\]

For a smooth cylindrical functional
$  F=f\bigl(W(h^{(1)}),\dots,W(h^{(m)})\bigr),$
its Malliavin derivative is
\begin{align}\label{exp_DF}
  DF=\sum_{k=1}^m
  \partial_kf\bigl(W(h^{(1)}),\dots,W(h^{(m)})\bigr)h^{(k)}.
\end{align}
We write $D_{i,y}F$ for the value of $DF$ at $(i,y)$.  The operator $D$
is closable from $L^2(\Omega)$ to $L^2(\Omega;\fH)$, and $\DD^{1,2}$
denotes its domain, equipped with
\begin{align}
  \|F\|_{1,2}^2
  =\EE[F^2]+\EE\bigl[\|DF\|_{\fH}^2\bigr].
\end{align}
We use the standard product and chain rules in $\DD^{1,2}$.  In
particular, if $F\in\DD^{1,2}$ and $\varphi\in C^1(\R)$ has bounded
derivative, then
\begin{align}\label{DphiF}
  D\varphi(F)=\varphi'(F)DF.
\end{align}
We also use the following logarithmic version: if $G>0$ almost surely,
$G\in\DD^{1,2}$, $\log G\in L^2(\Omega)$, and
$DG/G\in L^2(\Omega;\fH)$, then
\[
  \log G\in\DD^{1,2},
  \qquad
  D\log G=\frac{DG}{G}.
\]
Indeed, apply \eqref{DphiF} to $\log(G+\dl)$ and let $\dl\downarrow0$;
the stated assumptions and the closedness of $D$ justify the limit.
See \cite[Chapter 1]{Nua06} for these standard facts.

\begin{lemma}[\textsf{Gaussian integration by parts}]\label{ibp}
Let $W=\{W(h):h\in\fH\}$ be the isonormal Gaussian process.
For every $F\in\DD^{1,2}$ and $h\in\fH$,
\begin{align}\label{Gibp1}
  \EE[F W(h)]
  =
  \EE\bigl[\langle DF,h\rangle_{\fH}\bigr].
\end{align}
Equivalently,
\begin{align*}
  \EE[F W(h)]
  =
  \sum_{i=0}^n
  \int_\R
    \EE[D_{i,y}F]\,h_i(y)\,dy.
\end{align*}
\end{lemma}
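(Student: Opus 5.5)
The plan is to prove \eqref{Gibp1} first for smooth cylindrical functionals and then extend it to all of $\DD^{1,2}$ by closability. Let $F=f(W(h^{(1)}),\dots,W(h^{(m)}))$ with $f$ smooth and of polynomial growth together with its derivatives. By Gram--Schmidt, we may assume that $h=e_1$ and that $h^{(k)}$ lie in the span of an orthonormal family $e_1,\dots,e_d$. This rewriting changes neither side of \eqref{Gibp1}, because $D$ is well defined on cylindrical functionals and $W$ is linear. The case of general $h$ follows by scaling, and $h=0$ is trivial. After the reduction, $F=g(W(e_1),\dots,W(e_d))$, where $(W(e_j))_j$ are i.i.d.\ standard normals. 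By \eqref{exp_DF}, $\langle DF,e_1\rangle_{\fH}=\partial_1 g(W(e_1),\dots,W(e_d))$.

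The finite-dimensional statement is the classical one-dimensional Gaussian integration by parts. For the standard normal density $\varphi$ one has $\varphi'(z)=-z\varphi(z)$, so
\[
\int_{\R} z\,g(z,\cdot)\varphi(z)\,dz=\int_{\R}\partial_1 g(z,\cdot)\varphi(z)\,dz .
\]
This is justified by polynomial growth, which kills the boundary terms. Integrating over the remaining coordinates by Fubini gives $\EE[FW(e_1)]=\EE[\partial_1 g]$, which is \eqref{Gibp1} for cylindrical $F$.

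For general $F\in\DD^{1,2}$, take cylindrical $F_k$ with $F_k\to F$ in $L^2(\Omega)$ and $DF_k\to DF$ in $L^2(\Omega;\fH)$; such $F_k$ exist by the definition of $\DD^{1,2}$ as the closure. Since $W(h)\in L^2(\Omega)$, Cauchy--Schwarz gives $\EE[F_kW(h)]\to\EE[FW(h)]$. Likewise,
\[
\bigl|\EE\langle DF_k-DF,h\rangle_{\fH}\bigr|\le \|h\|_{\fH}\,\bigl\|DF_k-DF\bigr\|_{L^2(\Omega;\fH)}\to0 .
\]
The equivalent coordinate form follows by writing out $\langle\cdot,\cdot\rangle_{\fH}$ and applying Fubini, using $\EE\int|D_{i,y}F||h_i(y)|\,dy<\infty$.

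There is no real obstacle here. The only step needing care is the Gram--Schmidt reduction: it must be checked that re-expressing $F$ in an orthonormal basis leaves $\langle DF,h\rangle_{\fH}$ unchanged, which is the standard well-definedness of $D$ on cylindrical functionals (\cite[Chapter 1]{Nua06}).
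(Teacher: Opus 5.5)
Your proposal is correct and takes essentially the same approach as the paper: the paper verifies the identity for smooth cylindrical functionals by citing Nualart's Lemma~1.2.1, whose proof is exactly your orthonormal reduction with $h=e_1$ followed by one-dimensional Gaussian integration by parts, and then extends it to $\DD^{1,2}$ by density and closability of $D$. Your treatment of the coordinate form via Cauchy--Schwarz and Fubini is also fine.
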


\begin{proof}
The identity is standard for smooth cylindrical random variables.
Since the Malliavin derivative is closable and smooth cylindrical
random variables are dense in $\DD^{1,2}$, the identity extends to
all $F\in\DD^{1,2}$ by approximation. See, e.g., \cite[Lemma 1.2.1]{Nua06}. 
\end{proof}

\medskip
\noi
$\bullet$ {\bf Brownian growth and mollified objects.}
Let $\phi\in C_c^\infty(\R)$ be nonnegative and even, with
$\int_\R\phi=1$, and set
\[
  \phi^\eps(r)=\eps^{-1}\phi(r/\eps),
  \qquad
  \Phi^\eps(r)=\int_0^r\phi^\eps(s)\,ds.
\]
Then $\Phi^\eps$ is odd and $|\Phi^\eps|\leq1/2$.  For $\eps>0$, define
\begin{align}
  \xi_i^\eps(x)
  &=W_i\bigl(\phi^\eps(x-\cdot)\bigr),\label{s_xi}\\
  B_i^\eps(t)
  &=\int_0^t\xi_i^\eps(s)\,ds
    =W_i\bigl(\phi^\eps\ast\ind_{[0,t]}\bigr).
    \label{s_B}
\end{align}
Here and below, the oriented indicator is understood when $t<0$.  Since
$\phi^\eps$ is even,
\begin{align}\label{fora}
  (\phi^\eps\ast\ind_{[0,t]})(y)
  =\Phi^\eps(t-y)-\Phi^\eps(-y).
\end{align}
Stochastic Fubini also gives the pathwise representation
\begin{align}\label{Beps_path}
  B_i^\eps(t)
  =\int_\R\phi^\eps(r)
    \bigl(B_i(t-r)-B_i(-r)\bigr)\,dr.
\end{align}
For $\eps=0$, we put $B_i^0=B_i$.

Let us record a useful estimate. 

\begin{lemma}[\textsf{Polynomial growth of the Brownian paths}]
  \label{B_poly}
  Fix $\theta\in(1/2,1)$ and define
  \[
    M_\theta
    =1+\max_{0\leq i\leq n}
      \sup_{t\in\R}\frac{|B_i(t)|}{1+|t|^\theta}.
  \]
  Then $M_\theta<\infty$ almost surely,  and for some constants
  $c,C>0$ that depend only on $\theta, n$, 
  we have 
  
  \noi
  \begin{align} \label{Mt_bdd}
    \PP(M_\theta>u)\leq C e^{-cu^2},
    \qquad u\geq1.
  \end{align}
  In particular, $M_\theta$ has moments of every finite order.  If
  $\supp(\phi)\subset[-R_\phi,R_\phi]$, then, after changing a
  deterministic constant,
  \begin{align}\label{Beps_grow}
    \sup_{0\leq\eps<1}|B_i^\eps(t)|
    \leq C M_\theta(1+|t|^\theta),
    \qquad t\in\R,
  \end{align}
  simultaneously for $0\leq i\leq n$.
\end{lemma}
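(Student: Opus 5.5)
The approach is the standard pathwise control of Brownian motion on dyadic blocks, combined with Gaussian tail bounds for the running supremum; the mollified statement \eqref{Beps_grow} will then follow by averaging the same bound.

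\emph{Tail bound for a single Brownian motion.} Since $n$ is fixed, a union bound over $i\in\{0,\dots,n\}$ and over the two sides of each two-sided motion costs only a constant factor, so it suffices to treat one standard Brownian motion $B$ on $[0,\infty)$. Decompose $[0,\infty)=[0,1]\cup\bigcup_{k\geq0}[2^k,2^{k+1}]$. On the block $[0,2^{k+1}]$, the reflection principle gives
\[
\PP\Bigl(\sup_{0\le t\le 2^{k+1}}|B(t)|>v\Bigr)\le 4e^{-v^2/2^{k+2}}.
\]
On the event $\{\sup_t |B(t)|/(1+t^\theta)>u\}$ there is some block $k$ (or the unit interval) on which $\sup|B|>u\,2^{k\theta}$. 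The corresponding probability is at most
\[
4\exp\bigl(-u^2 2^{2k\theta}/2^{k+2}\bigr)=4\exp\bigl(-u^2 2^{k(2\theta-1)}/4\bigr).
\]
Because $\theta>1/2$, the exponents $2^{k(2\theta-1)}$ grow geometrically in $k$. Hence for $u\ge1$ the sum over $k$ is bounded by $C e^{-cu^2}$, where $c,C$ depend only on $\theta$. This is the one step needing care: the geometric growth of the exponent must dominate the sum uniformly in $u\ge1$. Summing the resulting bounds over the finitely many motions yields \eqref{Mt_bdd}. In particular $M_\theta<\infty$ almost surely, and $M_\theta$ has moments of every finite order.

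\emph{The mollified bound \eqref{Beps_grow}.} For $\eps=0$ the inequality is immediate from the definition of $M_\theta$, since $|B_i(t)|\le M_\theta(1+|t|^\theta)$. For $0<\eps<1$, use the representation \eqref{Beps_path}. The integrand is supported on $|r|\le \eps R_\phi\le R_\phi$, and $\phi^\eps\ge0$ integrates to $1$. Therefore
\[
|B_i^\eps(t)|\le \sup_{|r|\le R_\phi}\bigl(|B_i(t-r)|+|B_i(-r)|\bigr)\le M_\theta\bigl(1+(|t|+R_\phi)^\theta\bigr)+M_\theta\bigl(1+R_\phi^\theta\bigr).
\]
By subadditivity, $(|t|+R_\phi)^\theta\le |t|^\theta+R_\phi^\theta$, so the right-hand side is at most $C M_\theta(1+|t|^\theta)$ with $C$ depending only on $R_\phi$. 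This bound holds simultaneously for all $i$, all $t$, and all $\eps\in[0,1)$, which proves \eqref{Beps_grow}.
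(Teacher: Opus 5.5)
Your proposal is correct and follows essentially the same route as the paper: the reflection principle on dyadic blocks gives tails $\exp\{-cu^2 2^{k(2\theta-1)}\}$, which are summable uniformly in $u\ge1$ because $\theta>1/2$, and the mollified bound comes from the representation \eqref{Beps_path} with $\supp(\phi^\eps)\subset[-R_\phi\eps,R_\phi\eps]$ plus subadditivity of $s\mapsto s^\theta$. The one detail that both you and the paper leave implicit is the additive $1$ in $M_\theta$: the event $\{M_\theta>u\}$ means the ratio exceeds $u-1$, and this is handled by enlarging $C$ for $u\in[1,2]$ and using $u-1\ge u/2$ for $u\ge2$.
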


\begin{proof}
  By the reflection principle and Brownian scaling, for $k\geq0$ and
  $u\geq1$,
  \[
    \PP\bigg(   \sup_{|t|\leq2^{k+1}}|B_i(t)|   >u2^{k\theta}   \bigg)
    \leq
    C\exp\left\{-cu^2 2^{k(2\theta-1)}\right\},
  \]
for some absolute constants $c, C > 0$.
  Since $2\theta-1>0$, we have
  for $u\geq1$,
  \[
 \PP(M_\theta > u ) \leq  C n \sum_{k=0}^\infty
    \exp\left\{-cu^2 2^{k(2\theta-1)}\right\},
  \]
  which gives us the bound \eqref{Mt_bdd}
  and 
  also proves the
  almost-sure finiteness of $M_\theta$.  Finally,
  \eqref{Beps_path} and
  $\supp(\phi^\eps)\subset[-R_\phi\eps,R_\phi\eps]$ imply
  \[
    |B_i^\eps(t)|
    \leq
    2\sup_{|s|\leq |t|+R_\phi}|B_i(s)|,
  \]
  which yields \eqref{Beps_grow}.
\end{proof}

For $\ga=(\ga_1,\dots,\ga_n)\in\DN$, with $\ga_0=0$, define
\begin{align}\label{H_moll}
  H_{x}^{\al,\eps}(\ga)
  =\al B_n^\eps(x+\ga_n)
  +\sum_{i=0}^{n-1}
    \bigl(B_i^\eps(x+\ga_{i+1})-B_i^\eps(x+\ga_i)\bigr).
\end{align}
For a compact set $K\subset\R$, Lemma~\ref{B_poly}
gives
\begin{align}\label{H_grow}
  \sup_{\substack{x\in K\\0\leq\eps<1}}
  |H_{x}^{\al,\eps}(\ga)|
  \leq C_K M_\theta(1+\ga_n^\theta).
\end{align}

For $R\in[1,\infty]$ and $\lambda\in[0,\be)$, set
\begin{align}\label{def_ZR}
  Z_{x,R}^{\al,\be,\eps}(\lambda)
  =\be^{-n}\int_{\R_+^n}
    e^{\lambda\ga_n+\be H_{x}^{\al,\eps}(\ga)}
    \ind_{\{\ga_n<R\}}\,\mu_\be(d\ga).
\end{align}
Thus
$Z_{x,\infty}^{\al,\be,\eps}(0)=Z_{x}^{\al,\be,\eps}$.
For $R<\infty$, let
\begin{align}\label{QxR_abe}
  Q_{x,R}^{\al,\be,\eps}(d\ga)
  =
  \frac{   \be^{-n}e^{\be H_{x}^{\al,\eps}(\ga)}   \ind_{\{\ga_n<R\}}  }{
    Z_{x,R}^{\al,\be,\eps}(0)  }   \,\mu_\be(d\ga),
\end{align}
and use the same notation with $R=\infty$ for the full polymer
measure.

More generally, for $\lambda\in[0,\be)$ and $R<\infty$, define the
$\lambda$-tilted truncated polymer probability measure by
\begin{align}\label{def_QR_tilted}
  Q_{x,R}^{\al,\be,\eps,\lambda}(d\ga)
  &=
  \frac{
    \be^{-n}
    e^{\lambda\ga_n+\be H_{x}^{\al,\eps}(\ga)}
    \ind_{\{\ga_n<R\}}
  }{
    Z_{x,R}^{\al,\be,\eps}(\lambda)
  }
  \,\mu_\be(d\ga).
\end{align}
Equivalently,
\begin{align}\label{eq_QR_tilted}
  Q_{x,R}^{\al,\be,\eps,\lambda}(d\ga)
  &=
  \frac{e^{\lambda\ga_n}}
  {\int_{\DN} e^{\lambda\eta_n}
    Q_{x,R}^{\al,\be,\eps}(d\eta)}
  Q_{x,R}^{\al,\be,\eps}(d\ga).
\end{align}
In particular,
$
  Q_{x,R}^{\al,\be,\eps,0}
  =
  Q_{x,R}^{\al,\be,\eps}.
$

\begin{lemma}\label{lem_MD1}
  The following statements hold.

  \smallskip
  \noindent
  \textup{(i) (\textsf{Explicit Malliavin derivatives})}  For $\eps>0$,
  \begin{align}
    D_{j,y}B_i^\eps(t)
    &=\dl_{ij}\bigl(\Phi^\eps(t-y)-\Phi^\eps(-y)\bigr),
      \\
    D_{j,y}\xi_i^\eps(x)
    &=\dl_{ij}\phi^\eps(x-y).
  \end{align}
  Consequently,
  \begin{align}\label{MD_H}
    D_{j,y}H_{x}^{\al,\eps}(\ga)
    =
    \begin{cases}
      \Phi^\eps(x+\ga_{j+1}-y)
      -\Phi^\eps(x+\ga_j-y),&0\leq j\leq n-1,\\[3pt]
      \al\bigl(\Phi^\eps(x+\ga_n-y)-\Phi^\eps(-y)\bigr),&j=n.
    \end{cases}
  \end{align}

  \smallskip
  \noindent
  \textup{(ii) (\textsf{Variance bound})}  Put
  $v_\eps(x,\ga)=\Var(H_{x}^{\al,\eps}(\ga)) =  \| D H_{x}^{\al,\eps}(\ga) \|_\fH^2  $.  Then
  \begin{align}\label{bdd_vga}
    -\al^2|x|+(1+\al^2)\ga_n
    -\eps(n+\al^2)\int_\R |r|(\phi\ast\phi)(r)\,dr
    \leq v_\eps(x,\ga)
    \leq \al^2|x|+(1+\al^2)\ga_n.
  \end{align}
  In particular, for every compact $K\subset\R$,
  \[
    \sup_{x\in K,\,0\leq\eps<1}v_\eps(x,\ga)
    \leq C_K(1+\ga_n).
  \]

  \smallskip
  \noindent
  \textup{(iii) (\textsf{Uniform free-energy moments and removal of the path
  truncation})}  For every compact $K\subset\R$, $p<\infty$, and
  $\lambda\in[0,\be)$,
  \begin{align}\label{LP_logZRL}
    \sup_{\substack{x\in K,\ 1\leq R\leq\infty\\0\leq\eps<1}}
    \bigl\|\log Z_{x,R}^{\al,\be,\eps}(\lambda)\bigr\|_{L^p(\Omega)}
    <\infty.
  \end{align}
  For fixed $x$ and $\eps\in[0,1)$,
  \begin{align}\label{LPcvg_ZR}
    \log Z_{x,R}^{\al,\be,\eps}(\lambda)
    \xrightarrow[R\to\infty]{L^p(\Omega)}
    \log Z_{x,\infty}^{\al,\be,\eps}(\lambda).
  \end{align}

  \smallskip
  \noindent
  \textup{(iv) (\textsf{Removal of the mollification})}  For every fixed $x$ and
  every $p<\infty$,
  \begin{align}\label{LPcvg_eps}
    \log Z_{x}^{\al,\be,\eps}
    \xrightarrow[\eps\downarrow0]{L^p(\Omega)}
    \log Z_{x}^{\al,\be}.
  \end{align}
\end{lemma}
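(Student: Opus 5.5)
(i) is a direct computation. By \eqref{s_B} and \eqref{fora}, $B_i^\eps(t)=W_i(\Phi^\eps(t-\cdot)-\Phi^\eps(-\cdot))$ and $\xi_i^\eps(x)=W_i(\phi^\eps(x-\cdot))$. Both are first-chaos elements $W(h)$, so $DW(h)=h$, which gives the two displayed formulas. Summing along $\ga$ then yields \eqref{MD_H}: the $B_j^\eps(x+\ga_j)$ terms produce a difference in which the $\Phi^\eps(-y)$ pieces cancel for $j<n$, while the terminal term retains $-\Phi^\eps(-y)$ and the factor $\al$.

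For (ii), I would write $v_\eps=\|DH\|_\fH^2$ as a sum over levels. For $j<n$ the level-$j$ contribution is $\|\phi^\eps\ast\ind_{I_j}\|_{L^2}^2$ with $I_j=[x+\ga_j,x+\ga_{j+1}]$. Expanding via Plancherel, or directly, gives $\int\!\!\int \ind_{I_j}(a)\ind_{I_j}(b)(\phi^\eps\ast\phi^\eps)(a-b)\,da\,db$. This is at most $|I_j|$, because $\phi^\eps\ast\phi^\eps$ is a probability density, and at least $|I_j|-\int|r|(\phi^\eps\ast\phi^\eps)(r)\,dr=|I_j|-\eps\int|r|(\phi\ast\phi)$ by the standard estimate $|I|-\int\int\ind_I(a)\ind_{I^c}(b)\rho(a-b)\le\dots$. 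The terminal level works the same way with the interval joining $0$ and $x+\ga_n$, whose length lies between $\ga_n-|x|$ and $\ga_n+|x|$. Summing the levels (the interior lengths add to $\ga_n$) gives \eqref{bdd_vga}.

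For (iii), I would bound $\log Z$ from above and below. The upper bound comes from \eqref{H_grow} and Young's inequality, as in the proof of Lemma~\ref{FE_blpp}: $\be H+\lambda\ga_n\le \be CM_\theta^{1/(1-\theta)}+C+(\be+\lambda)\ga_n/2$-type bounds. Choosing the Young constant so that the exponent stays below $\be\ga_n$ (possible since $\lambda<\be$) makes the $\mu_\be$-integral finite, so $\log Z\le C(1+M_\theta^{1/(1-\theta)})$. For the lower bound I would restrict to the deterministic set $U_0=\{\ga\in\DN:\ga_n<1\}\subset\{\ga_n<R\}$ for $R\ge1$, on which $|H|\le C_KM_\theta$. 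This gives $\log Z\ge -C-\be C_KM_\theta$. Both bounds are uniform in $x\in K$, $R$ and $\eps$, and $M_\theta$ has all moments by Lemma~\ref{B_poly}, which proves \eqref{LP_logZRL}. Then \eqref{LPcvg_ZR} follows from monotone convergence $Z_R\uparrow Z_\infty$ almost surely (hence $\log Z_R\to\log Z_\infty$), together with uniform integrability supplied by the same two-sided domination.

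For (iv), I expect the main work to be almost-sure convergence. By \eqref{Beps_path} and continuity of Brownian paths, $B_i^\eps\to B_i$ locally uniformly, so $H_x^{\al,\eps}(\ga)\to H_x^\al(\ga)$ for each $\ga$. The domination $e^{\be|H|}\le e^{C M_\theta(1+\ga_n^\theta)}$ is $\mu_\be$-integrable, so dominated convergence gives $Z_x^{\eps}\to Z_x$ almost surely. The two-sided bound from (iii) is uniform in $\eps$ and has all moments, so it upgrades almost-sure convergence of $\log Z$ to $L^p$ convergence. The only delicate point is making sure the Young-inequality domination leaves a strictly negative linear coefficient in $\ga_n$. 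This holds because $\theta<1$.
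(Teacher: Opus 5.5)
Your proof is correct and follows the paper's argument for (i), (ii), the upper bound in (iii), and (iv). For (ii), the paper writes $\Var(B_i^\eps(t))=\int_{-t}^t(t-|r|)(\phi^\eps\ast\phi^\eps)(r)\,dr$, which is exactly your double integral $\int_I\int_I(\phi^\eps\ast\phi^\eps)(a-b)\,da\,db$.

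The one genuine difference is the lower bound in (iii).

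\begin{itemize}
\item The paper applies Jensen's inequality to the normalized restriction of $\mu_\be$ to a deterministic set $U_0$. It then controls the resulting centered Gaussian through the variance bound \eqref{bdd_vga} and Gaussian moment equivalence.
\item You instead bound $|H|\le 2C_KM_\theta$ pathwise on $\{\ga_n<1\}$ via \eqref{H_grow}.
\end{itemize}

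Your route is more elementary. It also gives a single almost-sure majorant $C\bigl(1+M_\theta+M_\theta^{1/(1-\theta)}\bigr)$ for $|\log Z_{x,R}^{\al,\be,\eps}(\lambda)|$, uniform over $x\in K$, $R\in[1,\infty]$, and $\eps\in[0,1)$. As a result, \eqref{LPcvg_ZR} and \eqref{LPcvg_eps} follow by plain dominated convergence, rather than by the paper's Vitali argument with exponent $2p$.
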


\begin{proof}
  Part~(i) follows directly from \eqref{s_B}, \eqref{fora}, and
  \eqref{s_xi}.  Substituting these identities into
  \eqref{H_moll} gives \eqref{MD_H}.

  For part~(ii), let $V_\eps(t)=\Var(B_i^\eps(t))$ for $t\geq0$.
  Since
  \begin{align}
    V_\eps(t)
    &=\int_{-t}^t(t-|r|)(\phi^\eps\ast\phi^\eps)(r)\,dr,
      \\
    t-\eps\int_\R|r|(\phi\ast\phi)(r)\,dr
    &\leq V_\eps(t)\leq t,
  \end{align}
  independence across the levels yields
  \[
    v_\eps(x,\ga)
    =\al^2V_\eps(|x+\ga_n|)
      +\sum_{i=0}^{n-1}V_\eps(\ga_{i+1}-\ga_i).
  \]
  The bounds in \eqref{bdd_vga} follow from
  $\ga_n-|x|\leq|x+\ga_n|\leq\ga_n+|x|$.

  We now prove part~(iii).  Fix a compact $K$, and let
  $U_0\Subset\DN$ be a deterministic open set such that $\ga_n<1$ on
  $U_0$ and $\mu_\be(U_0)>0$.  For every $R\geq1$, Jensen's inequality
  for the normalized restriction of $\mu_\be$ to $U_0$ gives
  \begin{align}
    \log Z_{x,R}^{\al,\be,\eps}(\lambda)
    \geq c_0
    +\frac{1}{\mu_\be(U_0)}
      \int_{U_0}
      \bigl(\lambda\ga_n+\be H_{x}^{\al,\eps}(\ga)\bigr)
      \,\mu_\be(d\ga),
  \end{align}
  where $c_0$ is deterministic.  The Brownian part of the integral
  on the right is a centered Gaussian random variable, whereas the
  deterministic contribution involving $\lambda\ga_n$ is uniformly
  bounded on $U_0$.  By \eqref{bdd_vga}, the Gaussian variance is
  bounded uniformly over $x\in K$ and $0\leq\eps<1$.  Gaussian moment
  equivalence therefore gives a uniform $L^p$ bound for the negative
  part of the logarithm.

   For the positive part, use \eqref{H_grow} and the
  marginal density of $\ga_n$ under $\mu_\be$ to obtain
  \begin{align*}
    Z_{x,R}^{\al,\be,\eps}(\lambda)
    &\leq C\exp\{CM_\theta\}
      \int_0^\infty t^{n-1}
      \exp\bigl\{-(\be-\lambda)t
        +C\be M_\theta t^\theta\bigr\}\,dt.
  \end{align*}
  Since $\lambda<\be$, Young's inequality with conjugate exponents
  $1/\theta$ and $1/(1-\theta)$ gives
  \begin{align*}
    C\be M_\theta t^\theta
    &\leq \frac{\be-\lambda}{2}t
      +C_\theta
      (\be-\lambda)^{-\theta/(1-\theta)}
      (\be M_\theta)^{1/(1-\theta)}\\
    &=\frac{\be-\lambda}{2}t
      +C_\theta\be
      \left(\frac{\be}{\be-\lambda}\right)^{
        \theta/(1-\theta)}
      M_\theta^{1/(1-\theta)}.
  \end{align*}
  Since $\be$ and $\lambda$ are fixed throughout this lemma, the
  deterministic factor
  $
    \be
    (\frac{\be}{\be-\lambda} )^{
      \theta/(1-\theta)}
  $
  can be absorbed into the constant.  Hence
  \begin{align*}
    C\be M_\theta t^\theta
    \leq \frac{\be-\lambda}{2}t
      +C_{\be,\lambda,\theta}
      M_\theta^{1/(1-\theta)}.
  \end{align*}
  Therefore
  \begin{align*}
    Z_{x,R}^{\al,\be,\eps}(\lambda)
    &\leq
      C\exp\left\{
        CM_\theta
        +C_{\be,\lambda,\theta}
          M_\theta^{1/(1-\theta)}
      \right\}
      \int_0^\infty t^{n-1}
      \exp\left\{-\frac{\be-\lambda}{2}t\right\}\,dt\\
    &=
      C_{\be,\lambda,n}
      \exp\left\{
        CM_\theta
        +C_{\be,\lambda,\theta}
          M_\theta^{1/(1-\theta)}
      \right\}.
  \end{align*}
  Consequently,
  \begin{align}
    \bigl(
      \log Z_{x,R}^{\al,\be,\eps}(\lambda)
    \bigr)_+
    \leq
    C_{\be,\lambda,n,\theta}
    \left(
      1+M_\theta^{1/(1-\theta)}
    \right),
  \end{align}
  uniformly over $x\in K$, $R\in [1, \infty]$, and $0\leq\eps<1$.
  Lemma~\ref{B_poly} now proves
  \eqref{LP_logZRL}.

  For fixed $x$ and $\eps$, monotone convergence gives
  $Z_{x,R}^{\al,\be,\eps}(\lambda)\uparrow
  Z_{x,\infty}^{\al,\be,\eps}(\lambda)$ almost surely.  Applying
  \eqref{LP_logZRL} with exponent $2p$ makes the $p$th powers of the
  logarithmic differences uniformly integrable.  Vitali's theorem
  yields \eqref{LPcvg_ZR}.

  Finally, \eqref{Beps_path} and continuity of the Brownian paths
  imply $B_i^\eps\to B_i$ locally uniformly almost surely.  Thus
  $H_{x}^{\al,\eps}(\ga)\to H_{x}^{\al,0}(\ga)$ for every $\ga$.
  The pathwise bound \eqref{H_grow} supplies an
  integrable majorant against $\mu_\be$, and consequently
  \[
    Z_{x}^{\al,\be,\eps}
    \longrightarrow Z_{x}^{\al,\be}
    \qquad\text{almost surely}.
  \]
  The partition functions are strictly positive, so their logarithms
  converge almost surely.  Another application of \eqref{LP_logZRL}
  with exponent $2p$ and Vitali's theorem proves \eqref{LPcvg_eps}.
\end{proof}

We next justify differentiation of the partition function in the
Malliavin sense.  We use the following standard closed-operator fact.

\begin{lemma}\label{lem_D_int}
  Let $(E,\nu)$ be a finite measure space.  Suppose that
  $F(\eta)\in\DD^{1,2}$ for $\nu$-almost every $\eta$ and
  \[
    \EE\int_E
    \bigl(|F(\eta)|^2+\|DF(\eta)\|_{\fH}^2\bigr)\,\nu(d\eta)<\infty.
  \]
  Then $\int_EF(\eta)\nu(d\eta)\in\DD^{1,2}$ and
  \[
    D\int_EF(\eta)\nu(d\eta)
    =\int_EDF(\eta)\nu(d\eta).
  \]
\end{lemma}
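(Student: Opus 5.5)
The plan is to build the identity first for simple integrands and then extend it to general ones using closedness of $D$. As the first step, we may assume $F(\eta)=\sum_k c_k(\eta)F_k$, where the $c_k$ are bounded measurable functions of $\eta$ and each $F_k\in\DD^{1,2}$ is fixed. For such integrands the claim is immediate from linearity of $D$ and Fubini: $\int_E F\,d\nu=\sum_k(\int c_k\,d\nu)F_k$. It is cleaner, though, to work in the Hilbert space $L^2(E\times\Omega)$ directly. Regard $\eta\mapsto F(\eta)$ as an element of $L^2(\nu;\DD^{1,2})$; the integrability hypothesis says exactly that it lies in the Bochner space $L^2(E,\nu;\DD^{1,2})$, which we take to mean that $F$ is jointly measurable, as is implicit in the statement.

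For the second step we use density. Simple functions $\sum_k\ind_{A_k}(\eta)F_k$ with $F_k\in\DD^{1,2}$ are dense in $L^2(E,\nu;\DD^{1,2})$, so we choose simple $F^{(m)}\to F$ in that norm. For each $m$, the random variable $G_m=\int F^{(m)}d\nu$ lies in $\DD^{1,2}$ with $DG_m=\int DF^{(m)}d\nu$. Since $\nu$ is finite, Cauchy--Schwarz gives
\[
\EE\Bigl|\int (F^{(m)}-F)\,d\nu\Bigr|^2\le \nu(E)\,\EE\int|F^{(m)}-F|^2\,d\nu\to0,
\]
and the same bound holds for the $\fH$-valued derivatives, using Bochner integration in $\fH$:
\[
\EE\Bigl\|\int (DF^{(m)}-DF)\,d\nu\Bigr\|_\fH^2\le \nu(E)\,\EE\int\|DF^{(m)}-DF\|_\fH^2\,d\nu\to0.
\]
Hence $G_m\to\int F\,d\nu$ in $L^2(\Omega)$ and $DG_m\to\int DF\,d\nu$ in $L^2(\Omega;\fH)$. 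Closedness of $D$ then gives the conclusion.

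The main obstacle is the measurability and density issue. We need $\eta\mapsto DF(\eta)$ to be jointly measurable so that $\int DF\,d\nu$ makes sense, and we need the approximation by simple functions. If only pointwise membership $F(\eta)\in\DD^{1,2}$ is assumed, I would first note that $\DD^{1,2}$ is a separable Hilbert space. By Pettis's theorem, weak measurability of $\eta\mapsto F(\eta)$ into $\DD^{1,2}$ then suffices. This weak measurability follows from measurability into $L^2(\Omega)$ together with the closed-graph structure of $D$: the graph of $D$ is a closed subspace of $L^2(\Omega)\oplus L^2(\Omega;\fH)$, so Borel sets transfer. Once this is settled, the rest is the routine limiting argument above.
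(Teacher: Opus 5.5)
Your proof is correct, and it is more than the paper gives. The paper offers no argument of its own; it only remarks that the lemma is the Gaussian analogue of \cite[Lemma~2.6]{BZ24}, a Poisson-space statement, and that the same proof works.

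Your route is the abstract one. Once $\eta\mapsto F(\eta)$ is a strongly measurable $\DD^{1,2}$-valued map, the lemma is exactly the fact that Bochner integrals commute with the bounded operators $\DD^{1,2}\hookrightarrow L^2(\Omega)$ and $D\colon\DD^{1,2}\to L^2(\Omega;\fH)$. Your simple-function approximation together with closedness of $D$ proves precisely this.

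Arguments that carry over verbatim between the Poisson and Gaussian settings usually keep the measurability bookkeeping in $L^2(\Omega)$ instead. Two such routes:
\begin{itemize}
\item Chaos expansion: write $F(\eta)=\sum_q I_q(f_q(\eta))$, note that the chaos kernels of $\int_E F\,d\nu$ are $\int_E f_q(\eta)\,\nu(d\eta)$, and bound $\sum_q q\,q!\,\|\int_E f_q\,d\nu\|^2$ by $\nu(E)\int_E\EE\|DF(\eta)\|_{\fH}^2\,\nu(d\eta)$.
\item Duality: use $D=\delta^{*}$ and check $\EE[(\int_E F\,d\nu)\,\delta(u)]=\EE\langle\int_E DF\,d\nu,u\rangle_{\fH}$ for $u$ in the domain of $\delta$, by Fubini.
\end{itemize}
Both only need measurability of $\eta\mapsto F(\eta)$ into $L^2(\Omega)$. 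That follows immediately from joint measurability via Fubini and Pettis in the separable space $L^2(\Omega)$. Your approach makes the structure transparent, but it pushes all the work into upgrading this to measurability into $\DD^{1,2}$.

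That upgrade is the one place your write-up is too quick. The sentence ``the graph of $D$ is closed, so Borel sets transfer'' is really the Lusin--Souslin theorem, and you should invoke it explicitly. The map $(G,DG)\mapsto G$ is a continuous injection from the Polish graph into $L^2(\Omega)$, so its image $\DD^{1,2}$ is Borel and its inverse is Borel. This gives at once Borel, hence strong, measurability of $\eta\mapsto(F(\eta),DF(\eta))$, and measurability of the exceptional $\nu$-null set.

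If you prefer to avoid descriptive set theory, you can check weak measurability by hand:
\begin{itemize}
\item Every continuous functional on $\DD^{1,2}$ has the form $G\mapsto\EE[GK]+\EE\langle DG,U\rangle_{\fH}$ with $K\in L^2(\Omega)$ and $U\in L^2(\Omega;\fH)$.
\item When $U$ lies in the dense domain of $\delta$, the second term equals $\EE[G\,\delta(U)]$, which is measurable in $\eta$ by Fubini.
\item A general $U$ is then handled by a pointwise limit in $\eta$.
\end{itemize}
With either fix, Pettis's theorem and the rest of your argument go through as written.
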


\begin{proof}
  This is the Gaussian analogue of
  \cite[Lemma~2.6]{BZ24} 
  and the proof follows exactly the same lines. 
\end{proof}

\begin{proposition}\label{prop_MD}
 The following statements hold.

  \smallskip
  \noindent
  \textup{(i) (\textsf{Polymer endpoint moments})}  
  Recall the definition of $  Q_{x,R}^{\al,\be,\eps}$
  from  \eqref{QxR_abe}. 
  For every compact
  $K\subset\R$ and every $p\in[1,\infty)$,
  we have 
  
  \noi
  \begin{align}\label{expec_Q}
    \sup_{\substack{x\in K,\ 0<\eps<1\\1\leq R\leq\infty}}
    \bigg\|
      \int_{\R_+^n}\ga_n
      Q_{x,R}^{\al,\be,\eps}(d\ga)
    \bigg\|_{L^p(\Omega)}<\infty.
  \end{align}
  In particular, with
  \begin{align}
    \cU_{x}^{\eps}
    =\int_{\R_+^n}
     \big[  DH_{x}^{\al,\eps}(\ga) \big]
      Q_{x}^{\al,\be,\eps}(d\ga),
  \end{align}
  one has
  \[
    \sup_{x\in K,\,0<\eps<1}
    \|\cU_x^\eps\|_{L^2(\Omega;\fH)}<\infty.
  \]

  \smallskip
  \noindent
  \textup{(ii) (\textsf{Truncated Malliavin derivative})}  For $R<\infty$,
  $\lambda\in[0,\be)$, and $\eps>0$,
  $\log Z_{x,R}^{\al,\be,\eps}(\lambda)\in\DD^{1,2}$ and
  \begin{align}\label{eq_DlogZR}
    D\log Z_{x,R}^{\al,\be,\eps}(\lambda)
    =\frac{\be}{Z_{x,R}^{\al,\be,\eps}(\lambda)}
    \be^{-n}\int_{\ga_n<R}
      e^{\lambda\ga_n+\be H_{x}^{\al,\eps}(\ga)}
     \big[ DH_{x}^{\al,\eps}(\ga) \big]\,\mu_\be(d\ga).
  \end{align}

  \smallskip
  \noindent
  \textup{(iii) (\textsf{Full Malliavin derivative})}  For every fixed
  $\eps>0$, we have
  \begin{align}\label{MD_logZ}
    \log Z_{x}^{\al,\be,\eps}\in\DD^{1,2}
    \quad{\rm and}
    \quad
    D\log Z_{x}^{\al,\be,\eps}
    =\be\cU_x^\eps.
  \end{align}
  Moreover,
  $
    D\log Z_{x,R}^{\al,\be,\eps}(0)
    \longrightarrow \be\cU_x^\eps
    \quad\text{in }L^2(\Omega;\fH)
$
  as $R\to\infty$.
\end{proposition}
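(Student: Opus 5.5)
I will prove the three parts in the order stated, since (iii) uses (i) and (ii).

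\emph{Part (i).} The plan is a tilting (exponential Chebyshev) argument based on the $\lambda$-tilted measures \eqref{def_QR_tilted}. Fix $\lambda\in(0,\be)$, say $\lambda=\be/2$. Jensen's inequality gives
\[
  \lambda\int\ga_n\,Q_{x,R}^{\al,\be,\eps}(d\ga)
  \le\log\int e^{\lambda\ga_n}\,Q_{x,R}^{\al,\be,\eps}(d\ga)
  =\log Z_{x,R}^{\al,\be,\eps}(\lambda)-\log Z_{x,R}^{\al,\be,\eps}(0).
\]
The right-hand side is bounded in every $L^p(\Omega)$, uniformly over $x\in K$, $0<\eps<1$ and $1\le R\le\infty$, by \eqref{LP_logZRL}. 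Since $\ga_n\ge0$, this proves \eqref{expec_Q}.

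For $\cU_x^\eps$, Jensen's inequality (in $\fH$) gives $\|\cU_x^\eps\|_\fH^2\le\int\|DH_x^{\al,\eps}(\ga)\|_\fH^2\,Q_x^{\al,\be,\eps}(d\ga)=\int v_\eps(x,\ga)\,Q_x(d\ga)$. By \eqref{bdd_vga} this is at most $C_K(1+\int\ga_n\,Q_x(d\ga))$, so the bound on $\cU_x^\eps$ follows from \eqref{expec_Q} with $p=1$.

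\emph{Part (ii).} Fix $R<\infty$ and $\eps>0$, and write
\[
  \cZ_R=\be^{-n}\int_{\ga_n<R}e^{\lambda\ga_n}e^{\be H_x^{\al,\eps}(\ga)}\,\mu_\be(d\ga),
\]
an integral over a finite measure space. For each $\ga$, $H_x^{\al,\eps}(\ga)$ is a centered Gaussian element of the first chaos with deterministic derivative \eqref{MD_H}. Hence $e^{\be H}\in\DD^{1,2}$ with $De^{\be H}=\be e^{\be H}DH$, by applying the chain rule to truncations of $\exp$ and passing to the limit. On $\{\ga_n<R\}$ the variance $v_\eps$ is bounded, so $\EE\int(e^{2\be H}+\be^2e^{2\be H}\|DH\|^2)\,d\mu<\infty$. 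Lemma~\ref{lem_D_int} then gives $\cZ_R\in\DD^{1,2}$ with the expected derivative.

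Next I apply the logarithmic rule stated after \eqref{DphiF}. The condition $\log\cZ_R\in L^2$ is \eqref{LP_logZRL}. The condition $D\cZ_R/\cZ_R\in L^2(\Omega;\fH)$ holds because $D\cZ_R/\cZ_R=\be\int DH\,dQ^{\lambda}_{x,R}$, whose squared norm is at most $\be^2C(1+R)$ by the same Jensen-plus-\eqref{bdd_vga} bound as in part (i). This yields \eqref{eq_DlogZR}.

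\emph{Part (iii).} Take $\lambda=0$ and let $R\to\infty$. By \eqref{LPcvg_ZR}, $\log Z_{x,R}\to\log Z_x$ in $L^2$, so closedness of $D$ reduces everything to showing $D\log Z_{x,R}=\be\int DH\,dQ_{x,R}\to\be\cU_x^\eps$ in $L^2(\Omega;\fH)$.

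Pathwise, the Gibbs densities $\ind_{\{\ga_n<R\}}e^{\be H}/Z_R$ converge to $e^{\be H}/Z$. Write the difference as
\[
  \int DH\,(dQ_{x,R}-dQ_x)
  =\Bigl(\frac{Z}{Z_R}-1\Bigr)\int_{\ga_n<R}DH\,dQ_x-\int_{\ga_n\ge R}DH\,dQ_x .
\]
The $\fH$-norm of each term is controlled by Cauchy--Schwarz via $\int\|DH\|\,\ind_{\{\ga_n\ge R\}}\,dQ_x\le (Q_x\{\ga_n\ge R\})^{1/2}(\int C(1+\ga_n)\,dQ_x)^{1/2}$. This tends to $0$ almost surely, and the bounds are uniformly integrable by \eqref{expec_Q} with $p=2$ together with the moments of $Z/Z_R\le$ (tilted) partition function ratios from \eqref{LP_logZRL}. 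Vitali's theorem then gives $L^2$ convergence, and closedness identifies $D\log Z_x^{\al,\be,\eps}=\be\cU_x^\eps$.

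The main obstacle is this last uniform-integrability step, specifically controlling $Z/Z_R$ in high moments. I would handle it by $Z_R\ge Z_1$ and the $L^p$ bound on $\log Z_{x,1}$ (negative part), via a Jensen lower bound on $U_0$.
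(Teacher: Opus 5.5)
Your parts (i) and (ii), and your reduction of (iii) via closedness of $D$ to the $L^2(\Omega;\fH)$ convergence of $\be\int DH\,dQ_{x,R}$, follow the paper's proof. Your Cauchy--Schwarz treatment of the tail term $\int_{\ga_n\ge R}DH\,dQ_x$ is also fine; dominated convergence already suffices there.

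The gap is in the step you yourself flag: proving uniform integrability of $\bigl\|(Z/Z_R-1)\int_{\ga_n<R}DH\,dQ_x\bigr\|_\fH^2$ through high moments of $Z/Z_R\le Z/Z_1$. Those moments do not exist in general. Your Jensen bound on $U_0$ controls $1/Z_1$. The numerator $Z=Z_x^{\al,\be,\eps}$, however, has inverse-gamma-type tails, and \eqref{LP_logZRL} only gives polynomial moments of $\log Z$, not exponential ones. (The paper bounds $(\log Z)_+$ by $C(1+M_\theta^{1/(1-\theta)})$ with $1/(1-\theta)>2$, while $M_\theta$ has only Gaussian tails.)

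Concretely, expand $\EE[Z^q]$ for an integer $q$ and restrict to $q$ replicas lying in a unit neighbourhood of a common path with $\ga_n\approx t$. By \eqref{bdd_vga}, the Gaussian factor $\exp\{\be^2\Var(\sum_j H(\ga^{(j)}))/2\}$ is at least $\exp\{\be^2q^2(1+\al^2)t/2-C_q\}$, against the reference weight $e^{-q\be t}$. Hence $\EE[Z^q]=\infty$ once $q>2/(\be(1+\al^2))$. For $n=1$, $\al=0$, $\eps=0$ this is Dufresne's identity: $Z$ is a multiple of the reciprocal of a $\mathrm{Gamma}(2/\be)$ variable.

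Since $Z_R$ has moments of all orders, H\"older's inequality then gives $Z/Z_R\notin L^{q'}$ for every $q'>q$ and every $R$. So whenever $\be(1+\al^2)>1$ --- for instance in the stationary case $\al=\be=1$ --- $Z/Z_R$ has no moment of order larger than $2$. No H\"older argument of the kind you propose can then give the needed $L^{1+\delta}$ bound on $(Z/Z_R-1)^2\bigl(1+\int\ga_n\,dQ_x\bigr)$. The term is in fact uniformly integrable; it is your route to that fact which cannot work.

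The remedy is to keep the ratio attached to the integral. Since $dQ_{x,R}=(Z/Z_R)\ind_{\{\ga_n<R\}}\,dQ_x$, your first term equals $(1-Z_R/Z)\,U_R$ with $U_R=\int DH\,dQ_{x,R}$. Here $1-Z_R/Z=Q_x\{\ga_n\ge R\}\in[0,1]$ tends to $0$ almost surely. The squared norm is therefore at most
\[
\|U_R\|_\fH^2\le\int v_\eps\,dQ_{x,R}\le C\Bigl(1+\int\ga_n\,dQ_{x,R}\Bigr),
\]
which is uniformly integrable by \eqref{expec_Q}. This is exactly why \eqref{expec_Q} is stated with the supremum over $R\in[1,\infty]$, and it is how the paper concludes; no moment of $Z/Z_R$ is needed.
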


\begin{proof}
  Fix $\lambda\in(0,\be)$.  Jensen's inequality gives
  \begin{align}
    \int_{\DN}\ga_nQ_{x,R}^{\al,\be,\eps}(d\ga)
    \leq\frac{1}{\lambda}
      \bigl(
        \log Z_{x,R}^{\al,\be,\eps}(\lambda)
        -\log Z_{x,R}^{\al,\be,\eps}(0)
      \bigr).
  \end{align}
  The compact-uniform estimate \eqref{LP_logZRL} proves
  \eqref{expec_Q}.  Since
  $\|DH_{x}^{\al,\eps}(\ga)\|_{\fH}^2=v_\eps(x,\ga)$, Jensen's
  inequality and \eqref{bdd_vga} give
  
  \noi
  \begin{align}
    \EE\bigl[\|\cU_x^\eps\|_{\fH}^2\bigr]
    &\leq
    \EE\int_{\DN} v_\eps(x,\ga)
      Q_{x}^{\al,\be,\eps}(d\ga)
      \leq C_K\left(
      1+\EE\int_{\DN}\ga_nQ_{x}^{\al,\be,\eps}(d\ga)
    \right), 
  \end{align}
  which proves part~(i).

  For finite $R$, Lemma~\ref{lem_D_int}, the chain rule, and
  \eqref{bdd_vga} show that
  \begin{align}\label{MD_ZRL}
    DZ_{x,R}^{\al,\be,\eps}(\lambda)
    =
    \be^{1-n}\int_{\ga_n<R}
      e^{\lambda\ga_n+\be H_{x}^{\al,\eps}(\ga)}
      DH_{x}^{\al,\eps}(\ga)\,\mu_\be(d\ga).
  \end{align}
  Indeed, the required square-integrability is immediate on
  $\{\ga_n<R\}$ from
$
    \EE\bigl[
      e^{2\be H_{x}^{\al,\eps}(\ga)}
    \bigr]
    =
    e^{2\be^2v_\eps(x,\ga)}.
$
  By \eqref{def_QR_tilted}, \eqref{MD_ZRL} can be rewritten as
  \begin{align}
    \frac{
      DZ_{x,R}^{\al,\be,\eps}(\lambda)
    }{
      Z_{x,R}^{\al,\be,\eps}(\lambda)
    }
    &=
    \be\int_{\DN}
    \big[  DH_{x}^{\al,\eps}(\ga) \big]\, 
      Q_{x,R}^{\al,\be,\eps,\lambda}(d\ga),
  \end{align}
where $Q_{x,R}^{\al,\be,\eps,\lambda}$, given in  \eqref{eq_QR_tilted},  
is a probability.
Then, we deduce from 
  Jensen's inequality that 
  
  \noi
  \begin{align*}
    \bigg\|
      \frac{
        DZ_{x,R}^{\al,\be,\eps}(\lambda)
      }{
        Z_{x,R}^{\al,\be,\eps}(\lambda)
      }
    \bigg\|_{\fH}^2
    &\leq
    \be^2
    \int_{\DN}
      \|DH_{x}^{\al,\eps}(\ga)\|_{\fH}^2
      Q_{x,R}^{\al,\be,\eps,\lambda}(d\ga)
      =
    \be^2
    \int_{\DN}
      v_\eps(x,\ga)\,
      Q_{x,R}^{\al,\be,\eps,\lambda}(d\ga).
  \end{align*}
  By \eqref{bdd_vga} and the fact that
  $Q_{x,R}^{\al,\be,\eps,\lambda}$ is supported on
  $\{\ga_n<R\}$,
  \begin{align*}
    \bigg\| \frac{  DZ_{x,R}^{\al,\be,\eps}(\lambda)   }{   Z_{x,R}^{\al,\be,\eps}(\lambda) }   \bigg\|_{\fH}^2
    &\leq
    \be^2   \bigl(      \al^2|x|+(1+\al^2)R
    \bigr).
  \end{align*}
  Hence
  $
    \frac{
      DZ_{x,R}^{\al,\be,\eps}(\lambda)
    }{
      Z_{x,R}^{\al,\be,\eps}(\lambda)
    }
    \in L^2(\Omega;\fH).
$
  Lemma~\ref{lem_MD1} also gives
  $\log Z_{x,R}^{\al,\be,\eps}(\lambda)\in L^2(\Omega)$.
  The logarithmic chain rule therefore applies and yields
  \eqref{eq_DlogZR}.

  It remains to let $R\to\infty$ at $\lambda=0$.  Since
  $Q_{x,R}^{\al,\be,\eps,0}
  =Q_{x,R}^{\al,\be,\eps}$, put
  \[
    U_R
    =
    \int_{\DN} 
          \big[  DH_{x}^{\al,\eps}(\ga) \big]
      Q_{x,R}^{\al,\be,\eps}(d\ga),
    \qquad
    U_\infty=\cU_x^\eps.
  \]
  Then
  \begin{align}\label{UR_dec}
    U_R-U_\infty
    &=\left(1-
      \frac{Z_{x,R}^{\al,\be,\eps}(0)}
           {Z_{x,\infty}^{\al,\be,\eps}(0)}
      \right)U_R
      -\int_{\ga_n\geq R}
      \big[  DH_{x}^{\al,\eps}(\ga) \big]
        Q_{x}^{\al,\be,\eps}(d\ga).
  \end{align}
  The scalar factor in the first term converges almost surely to zero
  and is bounded by one.  By Jensen, \eqref{bdd_vga}, and
  \eqref{expec_Q}, the family $\{\|U_R\|_{\fH}^2:R\geq1\}$ is uniformly
  integrable.  Hence the first term in \eqref{UR_dec}
  converges to zero in $L^2(\Omega;\fH)$.  For the second term,
  \begin{align*}
    \left\|
      \int_{\ga_n\geq R}
      \big[  DH_{x}^{\al,\eps}(\ga) \big]
      Q_{x}^{\al,\be,\eps}(d\ga)
    \right\|_{\fH}^2
    \leq
    \int_{\ga_n\geq R}v_\eps(x,\ga)
      Q_{x}^{\al,\be,\eps}(d\ga),
  \end{align*}
  whose expectation tends to zero by dominated convergence and
  \eqref{expec_Q}.  Thus $U_R\to U_\infty$ in
  $L^2(\Omega;\fH)$.

  Finally, \eqref{LPcvg_ZR} gives convergence of the logarithms in
  $L^2(\Omega)$.  The closedness of $D$, together with the convergence
  of the derivatives just proved, yields \eqref{MD_logZ}.
\end{proof}

\section{Cancellation identities}\label{cancel_app}

This appendix proves the two cancellations used in
Section~\ref{proof_sec}.  They
correspond to the two cases in the Malliavin derivative formula
\eqref{MD_H}: the indices $j=0,\dots,n-1$ correspond to the bulk
Brownian levels, whereas $j=n$ corresponds to the terminal Brownian
level.

We keep $\be>0$ arbitrary, because the lemmas are invoked at general
inverse temperature.  Fix $x\in\R$, $n\geq1$, $\al\in\R$, and
$\eps>0$, and write
\[
  H(\ga)=H_{x}^{\al,\eps}(\ga),
  \qquad
  \cZ=\int_{\DN} e^{\be H(\ga)}\,\mu_\be(d\ga),
  \qquad
  Q(d\ga)=\frac{e^{\be H(\ga)}}{\cZ}\,\mu_\be(d\ga).
\]
As in Section~\ref{proof_sec}, the deterministic factor $\be^{-n}$ is suppressed; it
has no effect on either cancellation.

For later use, note that the product rule gives, for every
$j\in\{0,\dots,n\}$,\footnote{To be more precise, 
using the estimates in Appendix A, 
we can first prove 
$e^{\be H(\ga)}/\cZ \in L^2(\Omega)$ and 
the right-hand side of \eqref{cancel_DQ} belongs to 
$L^2(\Omega; \fH)$, then we can conclude
$e^{\be H(\ga)}/\cZ\in\mathbb{D}^{1,2}$.  }

\noi
\begin{align}\label{cancel_DQ}
  D_{j,y}\bigg(\frac{e^{\be H(\ga)}}{\cZ}\bigg)
  =
  \be\left(  D_{j,y}H(\ga)    -   \int_{\DN} D_{j,y}H(\wt\ga)\,Q(d\wt\ga)   \right)
  \frac{e^{\be H(\ga)}}{\cZ}.
\end{align}
The form of $D_{j,y}H$ is different according as $j<n$ or $j=n$,
and we treat these two cases separately.

\begin{lemma}\label{cancel_b}
  For each $i\in\{0,\dots,n-1\}$, almost surely,
  \begin{align*}
    \int_\R dy\int_{\DN}\mu_\be(d\ga)\,
    D_{i,y}\bigg(\frac{e^{\be H(\ga)}}{\cZ}\bigg)
    \bigl(
      \phi^\eps(x+\ga_{i+1}-y)
      -\phi^\eps(x+\ga_i-y)
    \bigr)
    =0.
  \end{align*}
\end{lemma}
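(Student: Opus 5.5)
The plan is to substitute the explicit product-rule formula \eqref{cancel_DQ} with $j=i<n$ and reduce everything to the four-point kernel $\Xi_{i,\eps}$ of \eqref{not_path}. We then show that the two resulting pieces vanish separately, each by an oddness argument. Throughout, write
\[
  A(\ga)(y)=\Phi^\eps(x+\ga_{i+1}-y)-\Phi^\eps(x+\ga_i-y)=D_{i,y}H(\ga),
  \qquad
  a(\ga)(y)=\phi^\eps(x+\ga_{i+1}-y)-\phi^\eps(x+\ga_i-y),
\]
the first identity being \eqref{MD_H}. By \eqref{cancel_DQ}, the quantity in the lemma equals
\[
  \be\int_{\DN}Q(d\ga)\int_\R A(\ga)(y)\,a(\ga)(y)\,dy
  \;-\;\be\int_{\DN^2}Q(d\wt\ga)\,Q(d\ga)\int_\R A(\wt\ga)(y)\,a(\ga)(y)\,dy .
\]

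The first step is to justify these rearrangements. Since $|\Phi^\eps|\le 1/2$, we have $|A|\le 1$ uniformly. Also $\|a(\ga)\|_{L^1(dy)}\le 2$ for every $\ga$, and $Q$ is a probability measure. So every integrand above is absolutely integrable with respect to $dy\,Q\,Q$ with a deterministic bound, and Fubini lets us exchange the $y$-integral with the polymer integrals pathwise. (Writing $Q(d\ga)$ in place of $\mu_\be(d\ga)\,e^{\be H}/\cZ$ is just the definition of $Q$.)

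The second step is to evaluate the inner $y$-integrals via the convolution identity $\int_\R\Phi^\eps(a-y)\phi^\eps(b-y)\,dy=\Psi^\eps(a-b)$, exactly as in the proof of Proposition~\ref{bulk}. This gives $\int A(\wt\ga)a(\ga)\,dy=\Xi^{x,x}_{i,\eps}(\wt\ga,\ga)$. The key structural fact is that $\Psi^\eps=\Phi^\eps\ast\phi^\eps$ is odd, because $\Phi^\eps$ is odd and $\phi^\eps$ is even.

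For the diagonal term $\wt\ga=\ga$, set $L=\ga_{i+1}-\ga_i$. The kernel becomes $2\Psi^\eps(0)-\Psi^\eps(L)-\Psi^\eps(-L)$, which vanishes by oddness. So the first piece is zero pathwise.

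For the off-diagonal term, we check from \eqref{not_path} that $\Xi^{x,x}_{i,\eps}(\ga,\wt\ga)=-\Xi^{x,x}_{i,\eps}(\wt\ga,\ga)$. Under the swap $\wt\ga\leftrightarrow\ga$, the two ``same-index'' terms $\Psi^\eps(P_{i+1}-P_{i+1})$ and $\Psi^\eps(P_i-P_i)$ change sign by oddness. The two cross terms $\Psi^\eps(P_i(\wt\ga)-P_{i+1}(\ga))$ and $\Psi^\eps(P_{i+1}(\wt\ga)-P_i(\ga))$ are exchanged with each other up to a sign, again by oddness. Since $Q\otimes Q$ is invariant under the swap and the kernel is bounded by $2$, the double integral equals its own negative, hence is zero.

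The only point requiring care is bookkeeping of signs in the antisymmetry of $\Xi^{x,x}_{i,\eps}$. That antisymmetry is exactly the relation $\mathfrak D_i^{y,x}(\eta,\ga)=-\mathfrak D_i^{x,y}(\ga,\eta)$ already used in Lemma~\ref{quen_lem}, now in mollified form. No probabilistic input is needed, and the conclusion holds for every realization of the environment.
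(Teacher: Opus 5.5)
Your argument is correct, but it reaches the cancellation by a somewhat different mechanism than the paper.

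The paper never evaluates the $y$-integral in closed form. Writing $F_\ga(y)=D_{i,y}H(\ga)$, it notes that the test function $\phi^\eps(x+\ga_{i+1}-y)-\phi^\eps(x+\ga_i-y)$ is exactly $-F_\ga'(y)$. The diagonal piece then reduces to $\frac12\bigl[F_\ga(y)^2\bigr]_{-\infty}^{\infty}=0$. The cross piece, after symmetrizing in $(\ga,\wt\ga)$, becomes $\frac12\int_\R\frac{d}{dy}\bigl(F_\ga(y)F_{\wt\ga}(y)\bigr)\,dy=0$. Both steps use only that $F_\ga\to0$ at $\pm\infty$.

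You instead evaluate the pairing explicitly as $\Xi_{i,\eps}^{x,x}(\wt\ga,\ga)$ through the convolution identity. You then read off its antisymmetry and its vanishing diagonal from the oddness of $\Psi^\eps$. These are two views of the same fact: the form $(f,g)\mapsto\int_\R f g'\,dy$ is antisymmetric on functions with matching limits at $\pm\infty$, and $\Xi_{i,\eps}^{x,x}$ is that form computed in closed form.

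Your route reuses the kernel from Proposition~\ref{bulk}, and it displays the cancellation as the mollified counterpart of $\mathfrak D_i^{y,x}(\eta,\ga)=-\mathfrak D_i^{x,y}(\ga,\eta)$. The paper's route needs neither the closed-form kernel nor the oddness of $\Psi^\eps$. The same template also handles Lemma~\ref{cancel_bd}, where $G_\ga$ does not vanish at infinity and only the equal end-limits of $G_\ga^2$ and $G_\ga G_{\wt\ga}$ matter. Your method would cover that case too, since there the pairing is $\al^2\Psi^\eps(\wt\ga_n-\ga_n)$.
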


\begin{proof}
  Fix $i\in\{0,\dots,n-1\}$ and define
  \[
    F_\ga(y)
    =
    \Phi^\eps(x+\ga_{i+1}-y)
    -
    \Phi^\eps(x+\ga_i-y).
  \]
  By the first case of \eqref{MD_H},
 $    D_{i,y}H(\ga)=F_\ga(y).$
  Moreover,
  $
    F_\ga'(y)
    =
    -\phi^\eps(x+\ga_{i+1}-y)
    +
    \phi^\eps(x+\ga_i-y),
  $
  and hence
  $
    \phi^\eps(x+\ga_{i+1}-y)
    -
    \phi^\eps(x+\ga_i-y)
    =
    -F_\ga'(y).
  $

  Substituting the first case of \eqref{MD_H} into
  \eqref{cancel_DQ} gives
  \[
    D_{i,y}\bigg(\frac{e^{\be H(\ga)}}{\cZ}\bigg)
    =
    \be\left(
      F_\ga(y)
      -
      \int_{\DN} F_{\wt\ga}(y)\,Q(d\wt\ga)
    \right)
    \frac{e^{\be H(\ga)}}{\cZ}.
  \]
  Therefore, after integrating first with respect to $\mu_\be(d\ga)$,
  the quantity in the statement equals
  \begin{align*}
    &-\be\int_{\DN} Q(d\ga)
      \int_\R F_\ga(y)F_\ga'(y)\,dy
      +
    \be\int_{\DN^2} Q(d\ga)Q(d\wt\ga)
      \int_\R F_{\wt\ga}(y)F_\ga'(y)\,dy.
  \end{align*}

  For the first term,
  \[
    \int_\R F_\ga(y)F_\ga'(y)\,dy
    =
    \frac12
    \bigl[F_\ga(y)^2\bigr]_{-\infty}^{\infty}
    =0,
  \]
  because
  $
    F_\ga(y)\longrightarrow0
$ as $y\to\pm\infty.$
  For the second term, exchange $\ga$ and $\wt\ga$ and average the two
  expressions.  This gives
  \begin{align*}
    &\int_{\DN^2} Q(d\ga)Q(d\wt\ga)
      \int_\R F_{\wt\ga}(y)F_\ga'(y)\,dy
      =
    \frac12
    \int_{\DN^2} Q(d\ga)Q(d\wt\ga)
      \int_\R
      \frac{d}{dy}
      \bigl(
        F_\ga(y)F_{\wt\ga}(y)
      \bigr)\,dy
    =0,
  \end{align*}
  again because both factors tend to zero at $y=\pm\infty$.
  This proves the bulk cancellation.
\end{proof}

\begin{lemma}\label{cancel_bd}
  Almost surely,
  \[
    \int_\R dy\int_{\DN}\mu_\be(d\ga)\,
    D_{n,y}\bigg(\frac{e^{\be H(\ga)}}{\cZ}\bigg)
    \al\phi^\eps(x+\ga_n-y)
    =0.
  \]
\end{lemma}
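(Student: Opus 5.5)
The plan is to mirror the proof of Lemma~\ref{cancel_b}, with one extra observation for the anchoring term $-\Phi^\eps(-y)$. Define
\[
  F_\ga(y)=\Phi^\eps(x+\ga_n-y),\qquad c(y)=\Phi^\eps(-y),
\]
so that, by the second case of \eqref{MD_H}, $D_{n,y}H(\ga)=\al\bigl(F_\ga(y)-c(y)\bigr)$. Since $F_\ga'(y)=-\phi^\eps(x+\ga_n-y)$, the test weight in the statement is $\al\phi^\eps(x+\ga_n-y)=-\al F_\ga'(y)$.

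Substitute this into \eqref{cancel_DQ}. The anchoring term $c(y)$ appears both in $D_{n,y}H(\ga)$ and in the $Q$-average of $D_{n,y}H(\wt\ga)$, so it cancels identically:
\[
  D_{n,y}\bigg(\frac{e^{\be H(\ga)}}{\cZ}\bigg)
  =\be\al\left(F_\ga(y)-\int_{\DN}F_{\wt\ga}(y)\,Q(d\wt\ga)\right)\frac{e^{\be H(\ga)}}{\cZ}.
\]
This cancellation is the essential point. Without it we would face $\int c\,F_\ga'\,dy$, and that integral does not vanish because $c$ does not decay at $\pm\infty$.

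Integrating against $\mu_\be(d\ga)$ turns the expression into
\[
  -\be\al^2\int Q(d\ga)\int_\R F_\ga F_\ga'\,dy
  +\be\al^2\int\!\!\int Q(d\ga)Q(d\wt\ga)\int_\R F_{\wt\ga}F_\ga'\,dy .
\]
Here the argument differs from the bulk case: $F_\ga$ does not vanish at infinity. It tends to $\mp\tfrac12$ as $y\to\pm\infty$. The boundary terms are still harmless.
\begin{itemize}
\item For the first term, $\int F_\ga F_\ga'\,dy=\tfrac12[F_\ga^2]_{-\infty}^{\infty}=\tfrac12\bigl(\tfrac14-\tfrac14\bigr)=0$.
\item For the second term, symmetrize in $\ga\leftrightarrow\wt\ga$. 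This gives $\tfrac12\int\frac{d}{dy}(F_\ga F_{\wt\ga})\,dy=\tfrac12\bigl[(-\tfrac12)^2-(\tfrac12)^2\bigr]=0$, since both limits of $F_\ga F_{\wt\ga}$ equal $\tfrac14$.
\end{itemize}

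Finally, Fubini must be justified. For each fixed $\ga$, $F_\ga'$ is compactly supported and $F_\ga$ is bounded by $\tfrac12$, so the $y$-integrals converge absolutely. The $\mu_\be$-integrability of $|D_{n,y}(e^{\be H}/\cZ)|\,\phi^\eps$ follows from the boundedness of $\Phi^\eps$ together with $\int e^{\be H}\,d\mu_\be=\cZ<\infty$. Fubini is therefore justified pathwise.
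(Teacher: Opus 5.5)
Your proposal is correct and follows the paper's proof essentially verbatim: the anchoring term $\al\Phi^\eps(-y)$ cancels because $Q$ is a probability measure, and the two remaining terms vanish after symmetrizing, since $F_\ga^2$ and $F_\ga F_{\wt\ga}$ both tend to $\frac14$ at $\pm\infty$. The only differences are cosmetic (you factor $\al$ out of $F_\ga$) and your explicit pathwise Fubini check, which the paper leaves implicit.
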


\begin{proof}
  We now use the second case of \eqref{MD_H}, corresponding to the
  terminal Brownian level $j=n$.  Put
  \[
    G_\ga(y)
    =
    \al\Phi^\eps(x+\ga_n-y)
\AND
    K(y)
    =
    \al\Phi^\eps(-y).
  \]
  Then the $j=n$ case of \eqref{MD_H} reads
  $ D_{n,y}H(\ga)   =   G_\ga(y)-K(y)$.
  Also,
  $  G_\ga'(y)  =   -\al\phi^\eps(x+\ga_n-y),
$
  so that
  $
    \al\phi^\eps(x+\ga_n-y)
    =
    -G_\ga'(y).
  $

  Applying \eqref{cancel_DQ} with $j=n$ gives
  \[
    D_{n,y}\bigg(\frac{e^{\be H(\ga)}}{\cZ}\bigg)
    =
    \be\left[
      G_\ga(y)-K(y)
      -
      \int_{\DN}
      \bigl(G_{\wt\ga}(y)-K(y)\bigr)
      Q(d\wt\ga)
    \right]
    \frac{e^{\be H(\ga)}}{\cZ}.
  \]
  Since $K(y)$ does not depend on the polymer path and
  $Q$ is a probability measure,
  \[
    \int_{\DN}
      \bigl(G_{\wt\ga}(y)-K(y)\bigr)
      Q(d\wt\ga)
    =
    \int_{\DN} G_{\wt\ga}(y)Q(d\wt\ga)-K(y).
  \]
  Thus the two $K(y)$ terms cancel already at this stage, and
  \begin{align}
    D_{n,y}\bigg(\frac{e^{\be H(\ga)}}{\cZ}\bigg)
    =
    \be\left(
      G_\ga(y)
      -
      \int_{\DN} G_{\wt\ga}(y)Q(d\wt\ga)
    \right)
    \frac{e^{\be H(\ga)}}{\cZ}.
  \end{align}

  Consequently, after dividing the desired integral by $\be$, it is
  enough to show that
  \begin{align*}
    &-\int_{\DN} Q(d\ga)
      \int_\R G_\ga(y)G_\ga'(y)\,dy
 +
    \int_{\DN^2} Q(d\ga)Q(d\wt\ga)
      \int_\R G_{\wt\ga}(y)G_\ga'(y)\,dy
    =0.
  \end{align*}

  For the first term,
  \[
    \int_\R G_\ga(y)G_\ga'(y)\,dy
    =
    \frac12
    \bigl[G_\ga(y)^2\bigr]_{-\infty}^{\infty}
    =0.
  \]
  Indeed,
  \[
    G_\ga(y)\longrightarrow
    \begin{cases}
      \al/2,&y\to-\infty,\\
      -\al/2,&y\to+\infty,
    \end{cases}
  \]
  and hence $G_\ga(y)^2$ has the same limit $\al^2/4$ at both ends.

  Finally, exchanging $\ga$ and $\wt\ga$ and symmetrizing gives
  \begin{align*}
    &\int_{\DN^2} Q(d\ga)Q(d\wt\ga)
      \int_\R G_{\wt\ga}(y)G_\ga'(y)\,dy
      =
    \frac12
    \int_{\DN^2} Q(d\ga)Q(d\wt\ga)
      \int_\R
      \frac{d}{dy}
      \bigl(
        G_\ga(y)G_{\wt\ga}(y)
      \bigr)\,dy
    =0.
  \end{align*}
  The last equality holds because
  $
    G_\ga(y)G_{\wt\ga}(y)
    \longrightarrow \frac{\al^2}{4}$
as $y\to\pm\infty.$
Hence, the proof is completed. 
\end{proof}

\section{Justification of the mollification limit}
  \label{moll_app}

Throughout this appendix, $n$, $\al$, and $\be$ are fixed.  We use the
normalization adopted in Section~\ref{proof_sec} and set
\begin{align*}
  H_x^\eps=H_{x}^{\al,\eps},
  \quad
  \cZ_x^\eps
  =\int_{\DN}e^{\be H_x^\eps(\ga)}\,\mu_\be(d\ga)
    =\be^n Z_{x}^{\al,\be,\eps},
    \AND
  Q_x^\eps(d\ga)
  =\frac{e^{\be H_x^\eps(\ga)}}{\cZ_x^\eps}\,\mu_\be(d\ga).
\end{align*}
We write $H_x=H_x^0$, $\cZ_x=\cZ_x^0$, and $Q_x=Q_x^0$.  Thus the
partition function denoted by $\cZ_x^\eps$ in this appendix is the one
used in Section~\ref{proof_sec}; it differs from the normalized
partition function $Z_x^{\al,\be,\eps}$ in
\eqref{Z_gen} by the deterministic factor $\be^n$, while
the Gibbs measure is identical.

This deterministic normalization does not change any spatial
covariance derivative.  Indeed,
\[
  \frac1\be\log \cZ_x^\eps
  =\frac1\be\log Z_{x}^{\al,\be,\eps}
    +\frac n\be\log\be.
\]
Moreover, $x\mapsto\EE[\be^{-1}\log \cZ_x^\eps]$ is constant.  To see
this, write
\[
  B_n^\eps(x+\ga_n)
  =B_n^\eps(x)+
    \bigl(B_n^\eps(x+\ga_n)-B_n^\eps(x)\bigr).
\]
The relative increment fields
$\{B_i^\eps(x+t)-B_i^\eps(x):t\geq0\}$ have a law independent of $x$,
while the term $B_n^\eps(x)$ can be factored out from the partition function
with  $\EE[B_n^\eps(x)]=0$.  The same observation applies at
$\eps=0$.  We will use this normalization explicitly in the proof of
Proposition~\ref{app_der}.

\medskip

Fix $\theta\in(1/2,1)$ and let $\Omega_0$ be the full-probability event
on which all Brownian paths are continuous and the random variable
$M_\theta$ in Lemma~\ref{B_poly} is finite.
All pathwise statements below are made on $\Omega_0$.  For every
compact $K\subset\R$,
\begin{align}\label{D_grow}
  \sup_{\substack{u\in K\\0\leq\eps<1}}
  |H_u^\eps(\ga)|
  \leq C_{K,\omega}(1+\ga_n^\theta).
\end{align}
Furthermore, by the pathwise representation
\eqref{Beps_path},
\begin{align}\label{D_Hloc}
  \sup_{\substack{u\in K\\\ga_n\leq R}}
  |H_u^\eps(\ga)-H_u(\ga)|\longrightarrow0
\end{align}
for every finite $R$.

\begin{proposition}[\textsf{Uniform truncation principle}]\label{int_uc}
  Let $K\subset\R$ be compact and $q>0$.  Then
  \begin{align}\label{wt_L1}
    \sup_{u\in K}
    \int_{\DN}
      \left|e^{q\be H_u^\eps(\ga)}
        -e^{q\be H_u(\ga)}\right|\,\mu_\be(d\ga)
    \longrightarrow0.
  \end{align}
  More generally, suppose that
  $h_\eps:K\times\DN\to\R$ are uniformly bounded
  over $\eps\in [0, 1]$, and  for every
  $R<\infty$,
  \[
    \sup_{\substack{u\in K\\\ga_n\leq R}}
    |h_\eps(u,\ga)-h_0(u,\ga)|\longrightarrow0.
  \]
  Then
  \begin{align}\label{poly_unif}
    \sup_{u\in K}\left|
      \int_{\DN} e^{q\be H_u^\eps(\ga)}h_\eps(u,\ga)\,\mu_\be(d\ga)
      -\int_{\DN} e^{q\be H_u(\ga)}h_0(u,\ga)\,\mu_\be(d\ga)
    \right|\longrightarrow0.
  \end{align}
\end{proposition}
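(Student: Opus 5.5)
We prove the second statement \eqref{poly_unif} by a truncation argument; \eqref{wt_L1} is the special case $h_\eps\equiv1$ after the same splitting, since
\[
  \bigl|e^{q\be H_u^\eps}-e^{q\be H_u}\bigr|
  =e^{q\be H_u}\bigl|e^{q\be(H_u^\eps-H_u)}-1\bigr|.
\]
The argument is pathwise on $\Omega_0$. Fix $R<\infty$ and split $\DN$ into $\{\ga_n\le R\}$ and $\{\ga_n>R\}$.

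\textbf{Tail.} By \eqref{D_grow},
\[
  \sup_{u\in K,\ 0\le\eps<1}e^{q\be H_u^\eps(\ga)}
  \le \exp\bigl\{q\be C_{K,\omega}(1+\ga_n^\theta)\bigr\}.
\]
Under $\mu_\be$ the coordinate $\ga_n$ has the Gamma density $\be^n t^{n-1}e^{-\be t}/(n-1)!$, and $\theta<1$. Hence this majorant is $\mu_\be$-integrable, and its integral over $\{\ga_n>R\}$ tends to $0$ as $R\to\infty$. Multiplying by $\sup|h_\eps|\le C$, the tail contribution to the difference in \eqref{poly_unif} is bounded by a quantity $\delta(R)$ that is uniform in $u\in K$ and $\eps$, with $\delta(R)\to0$.

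\textbf{Bulk.} On $\{\ga_n\le R\}$ write
\[
  e^{q\be H_u^\eps}h_\eps-e^{q\be H_u}h_0
  =\bigl(e^{q\be H_u^\eps}-e^{q\be H_u}\bigr)h_\eps
  +e^{q\be H_u}\bigl(h_\eps-h_0\bigr).
\]
By \eqref{D_grow}, both exponents are bounded by the constant $C_{K,\omega}(1+R^\theta)$ on this set. The exponential is Lipschitz on bounded sets, so \eqref{D_Hloc} shows the first term is $o(1)$ uniformly over $u\in K$ and $\ga_n\le R$. The second term is $o(1)$ uniformly by the hypothesis on $h_\eps$. Since $\mu_\be$ is a probability measure, the bulk contribution is $o(1)$ as $\eps\downarrow0$ for each fixed $R$.

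\textbf{Conclusion.} Combining the two pieces gives
\[
  \limsup_{\eps\downarrow0}\sup_{u\in K}|\cdots|\le\delta(R),
\]
and letting $R\to\infty$ yields the claim.

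The only step that needs care is the \textbf{tail majorant}: it must be uniform in both $u$ and $\eps$. This is exactly what \eqref{D_grow} supplies, via the uniform-in-$\eps$ bound \eqref{Beps_grow} from Lemma~\ref{B_poly}. The sublinear growth $\ga_n^\theta$ is then dominated by the exponential factor $e^{-\be\ga_n}$ in the reference measure, for any fixed $q>0$.
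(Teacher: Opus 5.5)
Your proof is correct and is essentially the paper's argument: truncate at $\{\ga_n\le R\}$, use \eqref{D_Hloc} together with local Lipschitz continuity of the exponential on the bulk, and control the tail with the majorant $\exp\{C_{q,K,\omega}(1+\ga_n^\theta)\}$ from \eqref{D_grow}, which is uniform in $(u,\eps)$ and integrable against $\mu_\be$ since $\theta<1$. The paper only orders the steps differently: it proves \eqref{wt_L1} first and then feeds it into the same two-term splitting to get \eqref{poly_unif}. Strictly, \eqref{wt_L1} is not the literal $h_\eps\equiv1$ case of \eqref{poly_unif}, because there the absolute value sits inside the integral; this is harmless, since your bulk and tail estimates bound the absolute integrand and so give \eqref{wt_L1} as well.
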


\begin{proof}
  Fix $R<\infty$.  On $K\times\{\ga_n\leq R\}$, the Hamiltonians are
  uniformly bounded and converge uniformly by
  \eqref{D_Hloc}.  Hence the contribution of
  this set to \eqref{wt_L1} tends to zero.  On its
  complement, from \eqref{D_grow}, we have the upper bound
  $
    2\exp\{C_{q,K,\omega}(1+\ga_n^\theta)\}.
  $
  This majorant is integrable against $\mu_\be$, whose density contains the
  factor $e^{-\be\ga_n}$, and its integral over $\{\ga_n>R\}$ tends to
  zero as $R\to\infty$.  This  proves
  \eqref{wt_L1} by first sending $\eps\downarrow0$ and
  then $R\to\infty$.

  For the second assertion, write
  \begin{align*}
    &e^{q\be H_u^\eps}h_\eps-e^{q\be H_u}h_0
    =
      h_\eps\bigl(e^{q\be H_u^\eps}-e^{q\be H_u}\bigr)
      +e^{q\be H_u}(h_\eps-h_0).
  \end{align*}
  The first term is controlled by
  \eqref{wt_L1}.  The second converges uniformly on each
  truncation, and its tail is controlled by the same integrable
  majorant.  This proves \eqref{poly_unif}.
\end{proof}

\begin{lemma}\label{conv_Z}
  For every compact $K\subset\R$,
  \begin{align}\label{Z_unif}
    \sup_{u\in K}|\cZ_u^\eps-\cZ_u|\longrightarrow0.
  \end{align}
  The map $u\mapsto \cZ_u$ is continuous and strictly positive.
  Consequently, there are $\eps_K>0$ and $c_{K,\omega}>0$ such that
  \begin{align}\label{Z_low}
    \inf_{\substack{u\in K\\0\leq\eps\leq\eps_K}}\cZ_u^\eps
    \geq c_{K,\omega}.
  \end{align}
\end{lemma}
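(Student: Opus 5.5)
The uniform convergence \eqref{Z_unif} will come from Proposition~\ref{int_uc} with $q=1$. Since $\cZ_u^\eps-\cZ_u=\int_{\DN}\bigl(e^{\be H_u^\eps(\ga)}-e^{\be H_u(\ga)}\bigr)\mu_\be(d\ga)$, we have
\[
  \sup_{u\in K}|\cZ_u^\eps-\cZ_u|
  \leq\sup_{u\in K}\int_{\DN}\bigl|e^{\be H_u^\eps(\ga)}-e^{\be H_u(\ga)}\bigr|\,\mu_\be(d\ga),
\]
and \eqref{wt_L1} sends the right-hand side to zero on $\Omega_0$.

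For continuity of $u\mapsto\cZ_u$, I would fix $u_m\to u$ in a compact $K$ and apply dominated convergence in $\ga$. For each fixed $\ga$, $H_{u_m}(\ga)\to H_u(\ga)$ because the Brownian paths are continuous. The bound \eqref{D_grow} at $\eps=0$ gives the uniform majorant $\exp\{\be C_{K,\omega}(1+\ga_n^\theta)\}$. This majorant is $\mu_\be$-integrable because $\theta<1$ and the density of $\mu_\be$ carries the factor $e^{-\be\ga_n}$. An alternative is to note that each $\cZ_u^\eps$ is continuous in $u$ by the same argument with $\eps>0$, and then use the uniform convergence. Strict positivity is immediate, since the integrand $e^{\be H_u}$ is strictly positive and $\mu_\be(\DN)=1>0$.

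For the lower bound \eqref{Z_low}, continuity and positivity on the compact $K$ give $m_K:=\min_{u\in K}\cZ_u>0$ pathwise. By \eqref{Z_unif}, there is $\eps_K>0$, which may depend on $\omega$, such that $\sup_{u\in K}|\cZ_u^\eps-\cZ_u|\leq m_K/2$ for all $\eps\leq\eps_K$. Setting $c_{K,\omega}=m_K/2$ then gives the claim.

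None of these steps is a real obstacle. The only care needed is to keep track that $\eps_K$ is random, so the statement must be read pathwise on $\Omega_0$, and to check integrability of the majorant, which uses $\theta<1$.
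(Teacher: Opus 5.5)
Your proposal is correct and follows the paper's proof essentially step for step. It derives \eqref{Z_unif} from Proposition~\ref{int_uc} with $q=1$, proves continuity of $u\mapsto\cZ_u$ using the majorant from \eqref{D_grow} (your dominated-convergence argument is the paper's ``same truncation argument'' stated directly), and gets \eqref{Z_low} from the positive minimum on $K$ together with the uniform convergence. Your remark that $\eps_K$ depends on $\omega$, so that the statement is pathwise on $\Omega_0$, is accurate and is slightly more explicit than the paper's notation.
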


\begin{proof}
  Formula \eqref{Z_unif} follows from
  Proposition~\ref{int_uc} with $q=1$.  Continuity of $\cZ_u$ follows by
  the same truncation argument, using continuity of
  $u\mapsto H_u(\ga)$.  Since $\cZ_u>0$, compactness of $K$ gives a
  positive minimum, and \eqref{Z_low} follows from
  \eqref{Z_unif}.
\end{proof}

For two probability measures $\nu$ and $\wt\nu$ on the same measurable
space, we use the convention
\[
  \|\nu-\wt\nu\|_{\mathrm{TV}}
  =
  \sup_{\|f\|_\infty\leq1}
  \left|
    \int f\,d\nu-\int f\,d\wt\nu
  \right|.
\]
Equivalently,
\[
  \|\nu-\wt\nu\|_{\mathrm{TV}}
  =
  2\sup_A|\nu(A)-\wt\nu(A)|.
\]
If $\nu$ and $\wt\nu$ have densities $p$ and $\wt p$ with respect to
a common dominating measure $\mu$, then
\[
  \|\nu-\wt\nu\|_{\mathrm{TV}}
  =
  \int |p-\wt p|\,d\mu.
\]

\begin{proposition}[\textsf{Local total-variation convergence}]\label{Q_uc}
  For every compact $K\subset\R$,
  \begin{align}\label{TV_unif}
    \sup_{u\in K}
    \|Q_u^\eps-Q_u\|_{\mathrm{TV}}
    \xrightarrow{\eps\to 0}0.
  \end{align}
  Consequently, if $K_1,K_2\subset\R$ are compact and
  $h:K_1\times K_2\times\DN\times\DN\to\R$ is bounded and
  measurable, then
  \begin{align}\label{prod_unif}
    \sup_{\substack{u\in K_1\\v\in K_2}}
    \left|
      \int_{\DN^2} h(u,v,\ga,\wt\ga)
        Q_u^\eps(d\ga)Q_v^\eps(d\wt\ga)
      -\int_{\DN^2} h(u,v,\ga,\wt\ga)
        Q_u(d\ga)Q_v(d\wt\ga)
    \right|\longrightarrow0.
  \end{align}
\end{proposition}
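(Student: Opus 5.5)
The plan is to write the densities explicitly and reduce the total-variation bound to Proposition~\ref{int_uc} and Lemma~\ref{conv_Z}. With respect to $\mu_\be$, $Q_u^\eps$ has density $p_u^\eps(\ga)=e^{\be H_u^\eps(\ga)}/\cZ_u^\eps$ and $Q_u$ has density $p_u(\ga)=e^{\be H_u(\ga)}/\cZ_u$. Hence
\[
  \|Q_u^\eps-Q_u\|_{\mathrm{TV}}
  =\int_{\DN}|p_u^\eps-p_u|\,d\mu_\be
  \leq \frac{1}{\cZ_u^\eps}\int_{\DN}\bigl|e^{\be H_u^\eps}-e^{\be H_u}\bigr|\,d\mu_\be
  +\cZ_u\,\Bigl|\frac{1}{\cZ_u^\eps}-\frac{1}{\cZ_u}\Bigr|.
\]
The second term equals $|\cZ_u-\cZ_u^\eps|/\cZ_u^\eps$. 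By \eqref{Z_low}, for $\eps\leq\eps_K$ both denominators are bounded below by $c_{K,\omega}>0$ uniformly in $u\in K$. The first numerator tends to zero uniformly in $u\in K$ by \eqref{wt_L1} with $q=1$, and the second by \eqref{Z_unif}. This proves \eqref{TV_unif} pathwise on $\Omega_0$.

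For \eqref{prod_unif}, I would use the telescoping bound for product measures,
\[
  \|Q_u^\eps\otimes Q_v^\eps-Q_u\otimes Q_v\|_{\mathrm{TV}}
  \leq \|Q_u^\eps-Q_u\|_{\mathrm{TV}}+\|Q_v^\eps-Q_v\|_{\mathrm{TV}}.
\]
This follows by inserting $Q_u\otimes Q_v^\eps$ and integrating out one coordinate. Since $h$ is bounded, the difference in \eqref{prod_unif} is at most $\|h\|_\infty$ times the right-hand side, uniformly over $(u,v)\in K_1\times K_2$. The function $h(u,v,\cdot,\cdot)$ is measurable for each fixed $(u,v)$, so no joint-regularity issue arises. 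Applying \eqref{TV_unif} on $K_1$ and on $K_2$ concludes the argument.

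The only genuine input is the uniform lower bound on $\cZ_u^\eps$, which is already provided by Lemma~\ref{conv_Z}, so I expect no real obstacle. The one point to watch is that the statement is almost sure, holding on $\Omega_0$ with $\omega$-dependent constants, and that the supremum of a possibly non-measurable family is being taken. Everything here is a deterministic bound on a fixed $\omega$, so that is fine.
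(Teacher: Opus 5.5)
Your proposal is correct and follows essentially the same route as the paper. It splits the density difference into $\frac{1}{\cZ_u^\eps}\int|e^{\be H_u^\eps}-e^{\be H_u}|\,d\mu_\be+|\cZ_u^\eps-\cZ_u|/\cZ_u^\eps$, controlled by \eqref{wt_L1}, \eqref{Z_unif} and \eqref{Z_low}, and then uses the telescoping $(Q_u^\eps-Q_u)\otimes Q_v^\eps+Q_u\otimes(Q_v^\eps-Q_v)$ together with $\|h\|_\infty$ to obtain \eqref{prod_unif}.
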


\begin{proof}
  With $w_u^\eps=e^{\be H_u^\eps}$ and $w_u=e^{\be H_u}$,
  \begin{align*}
    \int_{\DN}\left|
      \frac{w_u^\eps}{\cZ_u^\eps}-\frac{w_u}{\cZ_u}
    \right|d\mu_\be
    &\leq
      \frac{1}{\cZ_u^\eps}\int_{\DN}|w_u^\eps-w_u|\,d\mu_\be
      +\frac{|\cZ_u^\eps-\cZ_u|}{\cZ_u^\eps}.
  \end{align*}
Then, the desired total-variation convergence  \eqref{TV_unif}  
follows from Proposition~\ref{int_uc}, Lemma~\ref{conv_Z}, and
  \eqref{Z_low}.
   For fixed $u\in K_1$ and $v\in K_2$, write
  \begin{align*}
    Q_u^\eps\otimes Q_v^\eps-Q_u\otimes Q_v
    &=
    (Q_u^\eps-Q_u)\otimes Q_v^\eps
    +
    Q_u\otimes(Q_v^\eps-Q_v).
  \end{align*}
  Since $Q_v^\eps$ and $Q_u$ are probability measures,
  \[
    \|(Q_u^\eps-Q_u)\otimes Q_v^\eps\|_{\mathrm{TV}}
    =
    \|Q_u^\eps-Q_u\|_{\mathrm{TV}},
  \]
  and
  \[
    \|Q_u\otimes(Q_v^\eps-Q_v)\|_{\mathrm{TV}}
    =
    \|Q_v^\eps-Q_v\|_{\mathrm{TV}}.
  \]
  Therefore, by the triangle inequality,
  \[
    \|Q_u^\eps\otimes Q_v^\eps-Q_u\otimes Q_v\|_{\mathrm{TV}}
    \leq
    \|Q_u^\eps-Q_u\|_{\mathrm{TV}}
    +
    \|Q_v^\eps-Q_v\|_{\mathrm{TV}}.
  \]

  Let
 $
    M=\|h\|_\infty<\infty.
 $
  By the above definition of  total variation distance, 
  we have 
  
  \noi
  \begin{align*}
    &\left|
      \int_{\DN^2} h(u,v,\ga,\wt\ga)
        Q_u^\eps(d\ga)Q_v^\eps(d\wt\ga)
      -
      \int_{\DN^2} h(u,v,\ga,\wt\ga)
        Q_u(d\ga)Q_v(d\wt\ga)
    \right|\\
    &\qquad\leq
 M    \left(
      \|Q_u^\eps-Q_u\|_{\mathrm{TV}}
      +
      \|Q_v^\eps-Q_v\|_{\mathrm{TV}}
    \right).
  \end{align*}
  Hence
  \begin{align*}
    &\sup_{\substack{u\in K_1\\v\in K_2}}
    \left|
      \int_{\DN^2} h(u,v,\ga,\wt\ga)
        Q_u^\eps(d\ga)Q_v^\eps(d\wt\ga)
      -
      \int_{\DN^2} h(u,v,\ga,\wt\ga)
        Q_u(d\ga)Q_v(d\wt\ga)
    \right|\\
    &\qquad\leq
 M  \left(
      \sup_{u\in K_1}\|Q_u^\eps-Q_u\|_{\mathrm{TV}}
      +
      \sup_{v\in K_2}\|Q_v^\eps-Q_v\|_{\mathrm{TV}}
    \right)
    \longrightarrow0
  \end{align*}
  by \eqref{TV_unif}.  This proves
  \eqref{prod_unif}.
  \qedhere

\end{proof}

Put $ \Psi^\eps=\Phi^\eps\ast\phi^\eps$,
which is an odd function with
$(\Psi^\eps)'=\phi^\eps\ast\phi^\eps$.  Then, 
\begin{align}\label{1/2bd}
  |\Psi^\eps(r)|\leq\frac12,
  \qquad r\in\R.
\end{align}
If $\supp(\phi)\subset[-R_\phi,R_\phi]$, then
\begin{align}\label{sgn_eq}
  \Psi^\eps(r)=\frac12\sgn(r)
  \qquad\text{whenever }|r|>2R_\phi\eps.
\end{align}

The only obstruction to using \eqref{sgn_eq} directly is a possible
small amount of Gibbs mass near the discontinuity of the sign
function.  The next estimate rules this out uniformly on compact
spatial sets.

\begin{lemma}[\textsf{Uniform small-diagonal estimate}]
  \label{diag_lem}
  Fix $\wt x\in\R$ and a compact set
  $K\Subset\R\setminus\{\wt x\}$.  With
  $\ga_0=\wt\ga_0=0$, if $(i,j)\neq(0,0)$, then
  \begin{align}\label{diag_unif}
    \lim_{\eta\downarrow0}\ 
    \sup_{\substack{u\in K\\0\leq\eps\leq\eps_K}}
    &(Q_{\wt x}^\eps\otimes Q_u^\eps)
    \bigl(
      |\wt x+\wt\ga_i-u-\ga_j|\leq\eta
    \bigr)=0.
  \end{align}
  Here $\eps_K>0$ is sufficiently small.  Also, for every compact
  $K\subset\R$ and $j\in\{1,\dots,n\}$,
  \begin{align}\label{diag_1}
    \lim_{\eta\downarrow0}\ 
    \sup_{\substack{u\in K\\0\leq\eps\leq\eps_K}}
    Q_u^\eps\bigl(|u+\ga_j|\leq\eta\bigr)=0.
  \end{align}
\end{lemma}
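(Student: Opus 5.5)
Since each quenched measure has a density with respect to Lebesgue measure on $\DN$, the task is to bound Gibbs probabilities of thin slabs uniformly in $u\in K$ and $\eps\le\eps_K$. The plan is to dominate the Gibbs densities by a single $\mu_\be$-integrable majorant and then use absolute continuity of the reference law. On $\Omega_0$, the growth bound \eqref{D_grow} (applied with the compact set $K\cup\{\wt x\}$) and the lower bound \eqref{Z_low} give
\[
  \frac{e^{\be H_u^\eps(\ga)}}{\cZ_u^\eps}
  \leq c_{K,\omega}^{-1}\exp\{\be C_{K,\omega}(1+\ga_n^\theta)\}=:g_\omega(\ga),
\]
uniformly over $u\in K\cup\{\wt x\}$ and $0\le\eps\le\eps_K$. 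The same bound holds for the product density with majorant $g_\omega(\wt\ga)g_\omega(\ga)$, which is integrable against $\mu_\be\otimes\mu_\be$ because $\theta<1$ and the reference density carries the factor $e^{-\be\ga_n}$. It therefore suffices to show
\[
  \lim_{\eta\downarrow0}\sup_{u\in K}\int_{\DN^2}\ind\{|\wt x+\wt\ga_i-u-\ga_j|\le\eta\}\,g_\omega(\wt\ga)g_\omega(\ga)\,\mu_\be(d\wt\ga)\mu_\be(d\ga)=0,
\]
and the analogous one-replica statement with $\ind\{|u+\ga_j|\le\eta\}$.

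First treat the one-replica estimate \eqref{diag_1}. Since $j\ge1$, $\ga_j$ has a bounded density on bounded regions under $\mu_\be$. Truncate at $\ga_n\le R$, where $g_\omega$ is bounded. There the $\mu_\be$-mass of $\{|u+\ga_j|\le\eta,\ \ga_n\le R\}$ is at most $C_R\eta$, uniformly in $u$, because the marginal density of $\ga_j$ is bounded by $\be$ times a Gamma density. The tail $\{\ga_n>R\}$ is controlled by $\int_{\ga_n>R}g_\omega\,d\mu_\be\to0$, independently of $u$ and $\eta$. Sending $\eta\downarrow0$ and then $R\to\infty$ gives the claim.

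For \eqref{diag_unif} we split into cases. If $i\ge1$ or $j\ge1$, at least one of $\wt\ga_i,\ga_j$ has a bounded density, and the two replicas are independent under the reference product measure. Conditioning on the other coordinate then gives a slab mass of at most $C_R\eta$ on the truncated region, uniformly in the shift $\wt x-u$, and the argument above applies. The excluded case $(i,j)=(0,0)$ is exactly the deterministic event $|\wt x-u|\le\eta$. For $\eta<\operatorname{dist}(\wt x,K)$ this event is empty when $i=j=0$, which is why that case is excluded, and this is the only place where $K\Subset\R\setminus\{\wt x\}$ is needed.

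The main subtlety is uniformity: the majorant must not depend on $u$ or $\eps$. This is exactly what the compact-uniform bounds \eqref{D_grow} and \eqref{Z_low} provide, with $\eps_K$ taken from Lemma~\ref{conv_Z}. Everything is pathwise on $\Omega_0$, with random constants, which suffices because the statement is quenched.
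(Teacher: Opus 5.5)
Your proof is correct and has the same skeleton as the paper's. The reference slab mass under $\mu_\be$ or $\mu_\be\otimes\mu_\be$ is $O(\eta)$ uniformly in the shift, because $\ga_j$ ($j\geq1$) has a bounded Gamma density and the two replicas are independent. That smallness is then transferred to the Gibbs measures using compact-uniform control of their densities.

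The genuine difference is the transfer step. The paper uses Cauchy--Schwarz: it bounds $(Q_{\wt x}^\eps\otimes Q_u^\eps)(A)$ by $\bigl(\int e^{2\be H_{\wt x}^\eps}d\mu_\be\int e^{2\be H_u^\eps}d\mu_\be\bigr)^{1/2}(\cZ_{\wt x}^\eps\cZ_u^\eps)^{-1}(C\eta)^{1/2}$. The prefactor is controlled by Proposition~\ref{int_uc} with $q=2$ together with Lemma~\ref{conv_Z}. This gives an explicit quenched rate $C_\omega\eta^{1/2}$ in one line.

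You instead build a single pointwise envelope $g_\omega$ from \eqref{D_grow} and \eqref{Z_low}. You then truncate at $\ga_n\leq R$, which amounts to using absolute continuity of the finite measure $g_\omega\,d\mu_\be$ with respect to $\mu_\be$. This is qualitative, with no rate. Its advantage is that all the uniformity in $u$ and $\eps$ is packed into one fixed integrable function, so no second-moment estimate is needed. Applying Cauchy--Schwarz to your envelope would recover the paper's $\eta^{1/2}$ rate.

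One small correction to your remark on $(i,j)=(0,0)$. For $u\in K$ that event is empty once $\eta<\operatorname{dist}(\wt x,K)$, so the estimate would hold there too. The paper excludes that case only because Lemma~\ref{ker_conv} handles it directly, where the kernel argument is bounded away from zero. For $(i,j)\neq(0,0)$, neither your argument nor the paper's actually uses $K\Subset\R\setminus\{\wt x\}$.
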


\begin{proof}
  Under $\mu_\be$, each $\ga_j$ with $j\geq1$ has a gamma density that
  is bounded on $\R_+$.  Since the two replicas are independent under
  $\mu_\be\otimes\mu_\be$, for $(i,j)\neq(0,0)$,
$
    \sup_{u\in K}
    (\mu_\be\otimes\mu_\be)
    \bigl(
      |\wt x+\wt\ga_i-u-\ga_j|\leq\eta
    \bigr)
    \leq C\eta.
$
  By Cauchy--Schwarz,
  we have 
  
  \noi
  \begin{align*}
    &(Q_{\wt x}^\eps\otimes Q_u^\eps)
      \bigl(     |\wt x+\wt\ga_i-u-\ga_j|\leq\eta   \bigr)
    \leq
      \frac{   \left(    \int_{\DN} e^{2\be H_{\wt x}^\eps}\,d\mu_\be   
       \int_{\DN} e^{2\be H_u^\eps}\,d\mu_\be   \right)^{1/2}  }{\cZ_{\wt x}^\eps \cZ_u^\eps}
      (C\eta )^{1/2}.
  \end{align*}
  Proposition~\ref{int_uc} with $q=2$ and Lemma~\ref{conv_Z} show that
  the prefactor is bounded uniformly over $u\in K$ and all sufficiently
  small $\eps$, including $\eps=0$.  This proves
  \eqref{diag_unif}.  The one-point estimate follows
  in exactly the same way from
  $
    \sup_{u\in K}\mu_\be(|u+\ga_j|\leq\eta)\leq C\eta.
  $
  \qedhere
\end{proof}

\begin{lemma}[\textsf{Convergence of the mollified sign kernels}]
  \label{ker_conv}

  Fix $\wt x\in\R$ and a compact set
  $K\Subset\R\setminus\{\wt x\}$.  For every
  $i,j\in\{0,\dots,n\}$,
  
  \noi
  \begin{align}
  \label{ker_2}
  \begin{aligned}
   & \sup_{u\in K}\bigg|
      \int_{\DN^2}
      \Psi^\eps(\wt x+\wt\ga_i-u-\ga_j)
      Q_{\wt x}^\eps(d\wt\ga)Q_u^\eps(d\ga) \\
     &\qquad\qquad\qquad  -\frac12    \int_{\DN^2}    \sgn(\wt x+\wt\ga_i-u-\ga_j)   Q_{\wt x}(d\wt\ga)Q_u(d\ga)
    \bigg|\xrightarrow{\eps\to 0} 0. 
  \end{aligned}
  \end{align}
  In addition, for every $j\in\{1,\dots,n\}$ and every compact
  $K\subset\R$,
  \begin{align}\label{ker_1}
    \sup_{u\in K}
    \bigg|    \int_{\DN}\Psi^\eps(u+\ga_j)Q_u^\eps(d\ga)     -\frac12\int_{\DN}\sgn(u+\ga_j)Q_u(d\ga)    \bigg|
    \xrightarrow{\eps\to 0} 0. 
  \end{align}
\end{lemma}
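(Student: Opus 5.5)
We split the difference into a mollifier error and a measure error. For \eqref{ker_2}, fix $u\in K$ and write
\begin{align*}
  &\int_{\DN^2}\Psi^\eps(\wt x+\wt\ga_i-u-\ga_j)\,Q_{\wt x}^\eps(d\wt\ga)Q_u^\eps(d\ga)
  -\frac12\int_{\DN^2}\sgn(\wt x+\wt\ga_i-u-\ga_j)\,Q_{\wt x}(d\wt\ga)Q_u(d\ga)\\
  &\quad=\int_{\DN^2}\Bigl[\Psi^\eps-\tfrac12\sgn\Bigr](\wt x+\wt\ga_i-u-\ga_j)\,Q_{\wt x}^\eps(d\wt\ga)Q_u^\eps(d\ga)\\
  &\qquad+\frac12\left[\int_{\DN^2}\sgn(\cdots)\,Q_{\wt x}^\eps\otimes Q_u^\eps-\int_{\DN^2}\sgn(\cdots)\,Q_{\wt x}\otimes Q_u\right].
\end{align*}
We handle the second bracket first. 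The function $h(u,\wt\ga,\ga)=\sgn(\wt x+\wt\ga_i-u-\ga_j)$ is bounded by $1$ and measurable. Proposition~\ref{Q_uc}, applied with $K_1=K$ and $K_2=\{\wt x\}$, therefore shows that this bracket tends to zero uniformly in $u\in K$.

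For the first term, we use the bounds \eqref{1/2bd} and \eqref{sgn_eq}. Together they give
\[
  |\Psi^\eps(r)-\tfrac12\sgn(r)|\le \ind_{\{|r|\le 2R_\phi\eps\}}.
\]
Hence the first term is bounded by
\[
  (Q_{\wt x}^\eps\otimes Q_u^\eps)\bigl(|\wt x+\wt\ga_i-u-\ga_j|\le 2R_\phi\eps\bigr).
\]
When $(i,j)\neq(0,0)$, we fix $\eta>0$. For $2R_\phi\eps\le\eta$ and $\eps\le\eps_K$, this probability is at most the supremum appearing in \eqref{diag_unif}. Letting $\eps\downarrow0$ and then $\eta\downarrow0$, Lemma~\ref{diag_lem} sends it to zero uniformly in $u\in K$.

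The case $(i,j)=(0,0)$ is where Lemma~\ref{diag_lem} does not apply, and it is the only point needing care. Here, however, the argument is deterministic and equals $\wt x-u$. Since $K\Subset\R\setminus\{\wt x\}$, we have $|\wt x-u|\ge d(K,\wt x)>0$ for all $u\in K$. So for $\eps<d(K,\wt x)/(2R_\phi)$, identity \eqref{sgn_eq} shows that the first term vanishes identically.

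The one-point statement \eqref{ker_1} follows by the same two-term split. The measure error is controlled by \eqref{TV_unif} with $h=\sgn(u+\ga_j)$. The mollifier error is bounded by $Q_u^\eps(|u+\ga_j|\le2R_\phi\eps)$, which tends to zero uniformly by \eqref{diag_1}, valid since $j\ge1$.

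We expect no real obstacle here, since the substantive input (uniform control of Gibbs mass near the sign discontinuity) has already been established in Lemma~\ref{diag_lem}. The only subtle point is the deterministic diagonal case $(i,j)=(0,0)$, which is exactly why $K$ must avoid $\wt x$.
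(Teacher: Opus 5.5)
Your proof is correct and follows essentially the paper's argument. It uses the same ingredients: \eqref{sgn_eq} and \eqref{1/2bd}, Proposition~\ref{Q_uc}, Lemma~\ref{diag_lem}, and the deterministic $(i,j)=(0,0)$ case, which works because $K$ stays away from $\wt x$. The only difference is organizational: you split into a mollifier error under $Q^\eps\otimes Q^\eps$ plus a total-variation error for the fixed kernel $\frac12\sgn$, whereas the paper splits path space into the neighborhood $A_{\eta,u}$ of the discontinuity and its complement, applying Proposition~\ref{Q_uc} to $\frac12\sgn\,\ind_{A_{\eta,u}^c}$.
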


\begin{proof}
  We prove \eqref{ker_2}.  If
  $(i,j)=(0,0)$, the argument of the kernel is $\wt x-u$, which is
  bounded away from zero on $K$.  For all sufficiently small $\eps$,
  \eqref{sgn_eq} is therefore exact, and the conclusion follows from
  Proposition~\ref{Q_uc}.

  Suppose that $(i,j)\neq(0,0)$, fix $\eta>0$, and let 
   $ A_{\eta,u}
    =\{|\wt x+\wt\ga_i-u-\ga_j|\leq\eta\}.$
  If $2R_\phi\eps<\eta$, then on $A_{\eta,u}^c$ the mollified kernel
  equals one half of the sign kernel.  Proposition~\ref{Q_uc}, applied
  to the bounded measurable function
  $
    \frac12\sgn(\wt x+\wt\ga_i-u-\ga_j)
    \ind_{A_{\eta,u}^c},$
  gives uniform convergence on this complement.  The two contributions
  from $A_{\eta,u}$ are bounded by the corresponding probabilities,
  using \eqref{1/2bd}; they tend to zero uniformly as
  $\eta\downarrow0$ by Lemma~\ref{diag_lem}.  Let
  first $\eps\downarrow0$ 
  and then 
  $\eta\downarrow0$.
  The proof of \eqref{ker_1} is identical,
  using \eqref{diag_1}.
\end{proof}

We can now pass to the limit in precisely the two prelimit expressions
that occur in Section~\ref{proof_sec}.

\begin{proposition}\label{T_uc}
  Fix $\wt x\in\R$ and a compact set
  $K\Subset\R\setminus\{\wt x\}$.  Then,
  we have 
  
  \noi
  \begin{align}
    \sup_{u\in K}\bigg|
      \frac1\be T_{2,\eps}(u,\wt x)  - 
   \frac12\EE\bigg[  \int_{\DN^2} Q_{\wt x}(d\wt\ga)Q_u(d\ga) 
    \sum_{i=0}^{n-1}   \mathfrak D_i(\wt x+\wt\ga_i,u+\ga_i)    \bigg]    \bigg|
    \xrightarrow{\eps\to 0} 0 .
    \label{T2_unif}
    \end{align}
Recall from \eqref{T1_pre} that
\[
      \frac1\be T_{1,\eps}(u,\wt x)
    =\al^2\EE\bigg[\int_{\DN^2}
      \Bigl(
        \Psi^\eps(\wt x+\wt\ga_n-u-\ga_n)
        +\Psi^\eps(u+\ga_n)
      \Bigr)
      Q_{\wt x}^\eps(d\wt\ga)Q_u^\eps(d\ga)
    \bigg].
        \]
Then, we have
    
    \noi
  \begin{align}  
  \begin{aligned} 
    \sup_{u\in K}
      \bigg| \frac{\al^2}{2}\EE\bigg[
        \int_{\DN^2} Q_{\wt x}(d\wt\ga)Q_u(d\ga)
        & \Bigl(   \sgn(\wt x+\wt\ga_n-u-\ga_n)   +\sgn(u+\ga_n)     \Bigr)      \bigg]  \\
     & \qquad  -   \frac1\be T_{1,\eps}(u,\wt x)
        \bigg|
      \xrightarrow{\eps\to 0} 0. 
      \end{aligned} 
    \label{T1_unif}
  \end{align}
\end{proposition}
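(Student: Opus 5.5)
The plan is to start from the exact prelimit identities \eqref{T2_Xi} and \eqref{T1_pre}, which express $\frac1\be T_{2,\eps}(u,\wt x)$ and $\frac1\be T_{1,\eps}(u,\wt x)$ as Brownian expectations of quenched double integrals of finitely many $\Psi^\eps$-kernels against $Q_{\wt x}^\eps\otimes Q_u^\eps$. Expanding $\Xi_{i,\eps}^{\wt x,u}$ via \eqref{not_path}, each summand has the form
\[
\int_{\DN^2}\Psi^\eps(\wt x+\wt\ga_a-u-\ga_b)\,Q_{\wt x}^\eps(d\wt\ga)Q_u^\eps(d\ga),\qquad a,b\in\{i,i+1\},
\]
with a sign $\pm1$. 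The boundary term consists of the kernel with $(a,b)=(n,n)$ together with the one-point kernel $\int\Psi^\eps(u+\ga_n)Q_u^\eps(d\ga)$. By linearity, it therefore suffices to prove, for each fixed pair $(a,b)$ and for the one-point kernel, the uniform-in-$u\in K$ convergence of the \emph{expectations} of these quenched kernels to one half of the corresponding sign kernels. Summing these finitely many statements with their signs then gives \eqref{T2_unif} and \eqref{T1_unif}. For the limit we use that, since $Q^\eps$ is absolutely continuous, the sign kernels evaluated under $Q$ assemble exactly into $\mathfrak D_i(\wt x+\wt\ga_i,u+\ga_i)$.

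For a fixed kernel, Lemma~\ref{ker_conv} (\eqref{ker_2} for pairs, \eqref{ker_1} with $j=n$ for the one-point term) supplies almost-sure convergence on $\Omega_0$ of the supremum over $u\in K$ of the difference between the quenched mollified integral and half the quenched sign integral. To move this inside the Brownian expectation, we bound
\[
\sup_{u\in K}\bigl|\EE[\mathcal K_\eps(u)]-\EE[\mathcal K(u)]\bigr|\leq \EE\Bigl[\sup_{u\in K}|\mathcal K_\eps(u)-\mathcal K(u)|\Bigr].
\]
The integrand inside the last expectation is bounded by the deterministic constant $1$, because $|\Psi^\eps|\leq\frac12$ by \eqref{1/2bd}, $|\frac12\sgn|\leq\frac12$, and all the $Q$'s are probability measures. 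Bounded convergence then sends the right side to zero, and the factor $\al^2$ and the finite number of terms are harmless.

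The main subtle point is measurability of $\sup_{u\in K}$, needed so that the expectation makes sense. We handle this by noting that $u\mapsto\mathcal K_\eps(u)$ is continuous for $\eps>0$ (the mollified Gibbs densities and $\Psi^\eps$ are continuous), so the supremum can be taken over rationals. For the limit kernel $\mathcal K$ we either use the outer expectation or observe that continuity in $u$ follows from the uniform limit. With that settled, everything else is already contained in Lemma~\ref{ker_conv}: the genuine analytic difficulty, small Gibbs mass near the sign discontinuity, is handled there through Lemma~\ref{diag_lem}.
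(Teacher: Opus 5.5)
Your proposal is correct and follows essentially the same route as the paper. Both proofs split into the finitely many $\Psi^\eps$-kernels (four per $\Xi_{i,\eps}$, plus the two boundary kernels), invoke Lemma~\ref{ker_conv} for almost-sure convergence uniformly in $u\in K$, and then pass through the Brownian expectation by bounded convergence using $|\Psi^\eps|\le\frac12$. Your measurability remark about $\sup_{u\in K}$ is a harmless refinement that the paper leaves implicit. Reassembling the sign kernels into $\mathfrak D_i$ needs no absolute continuity, since $\mathfrak D_i$ is by definition the corresponding signed sum of sign functions.
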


\begin{proof}
  For $0\leq i\leq n-1$, define
  \begin{align*}
    U_{i,\eps}(u)
    &=\int_{\DN^2}
      \Xi_{i,\eps}(\wt x+\wt\ga_i,u+\ga_i)
      Q_{\wt x}^\eps(d\wt\ga)Q_u^\eps(d\ga),\\
    U_i(u)
    &=\frac12\int_{\DN^2}
      \mathfrak D_i(\wt x+\wt\ga_i,u+\ga_i)
      Q_{\wt x}(d\wt\ga)Q_u(d\ga).
  \end{align*}
  The four applications of Lemma~\ref{ker_conv} corresponding to
  the four terms in $\Xi_{i,\eps}$ give
  \[
    \sup_{u\in K}|U_{i,\eps}(u)-U_i(u)|     \xrightarrow{\eps\to 0} 0
    \quad\text{almost surely}.
  \]
  The supremum is bounded by a deterministic constant because
  $|\Psi^\eps|\leq1/2$ and $|\mathfrak D_i|\leq4$.  Dominated convergence over
  the environment and the identity \eqref{T2_Xi} prove
  \eqref{T2_unif}.

 To prove \eqref{T1_unif},
 it suffices to apply the two-point and one-point conclusions of
  Lemma~\ref{ker_conv} and the dominated convergence
  with  the fact that the kernels are bounded by $1/2$.
  \qedhere
  
\end{proof}

We finish by justifying differentiation under the path integral and
then the exchange of the mollification limit with the spatial
derivative.

\begin{lemma}\label{pf_uxeps1}
  For every fixed $\eps>0$, almost surely the map
  $u\mapsto \cZ_u^\eps$ is continuously differentiable and
  \begin{align}\label{app_spder}
    \frac{\partial}{\partial u}\frac1\be\log \cZ_u^\eps
    =\int_{\DN} Q_u^\eps(d\ga)\bigg[
      \al\xi_n^\eps(u+\ga_n)
      +\sum_{i=0}^{n-1}
      \bigl(
        \xi_i^\eps(u+\ga_{i+1})
        -\xi_i^\eps(u+\ga_i)
      \bigr)
    \bigg].
  \end{align}
\end{lemma}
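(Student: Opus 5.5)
We prove Lemma~\ref{pf_uxeps1} by differentiating the Hamiltonian pathwise and then passing the derivative under the $\mu_\be$-integral by dominated convergence, with a majorant that is locally uniform in $u$.

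\textbf{Step 1: pathwise derivative of $H_u^\eps$.} Fix $\eps>0$ and work on $\Omega_0$. By \eqref{s_xi}, $\xi_i^\eps(t)=\int_\R B_i(t-r)(\phi^\eps)'(r)\,dr$ is smooth in $t$. By \eqref{Beps_path}, $B_i^\eps$ is $C^\infty$ with $(B_i^\eps)'=\xi_i^\eps$. Hence $u\mapsto H_u^\eps(\ga)$ is $C^1$ for each $\ga$, and
\[
  \partial_uH_u^\eps(\ga)=\al\xi_n^\eps(u+\ga_n)+\sum_{i=0}^{n-1}\bigl(\xi_i^\eps(u+\ga_{i+1})-\xi_i^\eps(u+\ga_i)\bigr).
\]

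\textbf{Step 2: polynomial growth of $\xi_i^\eps$.} Since $\supp\phi^\eps\subset[-R_\phi\eps,R_\phi\eps]$, Lemma~\ref{B_poly} gives
\[
  |\xi_i^\eps(t)|\le \|(\phi^\eps)'\|_{L^1}\sup_{|s|\le|t|+R_\phi}|B_i(s)|\le C_{\eps}M_\theta(1+|t|^\theta).
\]
Therefore, for a compact $K$,
\[
  \sup_{u\in K}|\partial_uH_u^\eps(\ga)|\le C_{K,\eps,\omega}(1+\ga_n^\theta).
\]
Combining this with \eqref{H_grow}, the function
\[
  \sup_{u\in K}\bigl|\partial_u e^{\be H_u^\eps(\ga)}\bigr|\le C(1+\ga_n^\theta)\exp\{C(1+\ga_n^\theta)\}
\]
is integrable against $\mu_\be$, because the density of $\mu_\be$ carries the factor $e^{-\be\ga_n}$ and $\theta<1$.

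\textbf{Step 3: differentiating under the integral.} By the mean value theorem and dominated convergence, $u\mapsto\cZ_u^\eps$ is differentiable with
\[
  \partial_u\cZ_u^\eps=\be\int_{\DN}\partial_uH_u^\eps(\ga)\,e^{\be H_u^\eps(\ga)}\,\mu_\be(d\ga).
\]
This derivative is continuous in $u$, again by dominated convergence with the same majorant, since $\partial_uH_u^\eps(\ga)$ is continuous in $u$. The function $\cZ_u^\eps$ is strictly positive. Dividing by $\be\,\cZ_u^\eps$ gives \eqref{app_spder}.

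\textbf{Main obstacle.} The only delicate point is the uniform domination in Step 2. It rests on two facts: the growth exponent $\theta<1$ is beaten by the exponential factor $e^{-\be\ga_n}$, and the constant in the bound for $\xi_i^\eps$ may depend on $\eps$, which is allowed because $\eps$ is fixed.
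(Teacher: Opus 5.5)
Your proof is correct and follows essentially the same route as the paper. Integration by parts in the convolution gives $|\xi_i^\eps(t)|\le C_\eps M_\theta(1+|t|^\theta)$; combined with the growth bound on $H_u^\eps$, this yields a majorant that is uniform in $u\in K$ and integrable against $\mu_\be$ because $\theta<1$, which justifies both differentiating under the integral and the continuity of the derivative.
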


\begin{proof}
  Fix a compact interval $K$.  Integration by parts in the convolution
  defining $\xi_i^\eps$ gives
  \[
    |\xi_i^\eps(t)|
    \leq \|\phi'\|_{L^1}\eps^{-1}
      \sup_{|s-t|\leq R_\phi\eps}|B_i(s)|
    \leq C_{\eps,\phi}M_\theta(1+|t|^\theta).
  \]
  Together with \eqref{D_grow}, this yields, uniformly
  for $u\in K$,
  \begin{align*}
    e^{\be H_u^\eps(\ga)}
    \left|\frac{\partial}{\partial u}H_u^\eps(\ga)\right|
    \leq C_{\omega,\eps,K}
      (1+\ga_n^\theta)
      \exp\{C_{\omega,K}(1+\ga_n^\theta)\}.
  \end{align*}
  The right-hand side is integrable against $\mu_\be$ because
  $\theta<1$.  Dominated differentiation under the path integral gives
  \eqref{app_spder}; the same domination proves
  continuity of the derivative.
\end{proof}

\begin{proposition}\label{app_der}
  For every $x,\wt x\in\R$ with $x\neq\wt x$,
  \begin{align}\label{app_derlim}
    \frac{\partial}{\partial x}
    \EE\bigl[
      F_{\wt x}^{\al,\be}F_{x}^{\al,\be}
    \bigr]
    =\lim_{\eps\downarrow0}
    \EE\left[
      \frac1\be\log \cZ_{\wt x}^\eps
      \frac{\partial}{\partial x}
      \frac1\be\log \cZ_x^\eps
    \right].
  \end{align}
\end{proposition}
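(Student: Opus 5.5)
Throughout, set $G_\eps(u)=\EE[\be^{-1}\log\cZ_{\wt x}^\eps\,\be^{-1}\log\cZ_u^\eps]$ and $G(u)=\EE[F_{\wt x}^{\al,\be}F_u^{\al,\be}]$. Since $\be^{-1}\log\cZ_u^\eps=\be^{-1}\log Z_u^{\al,\be,\eps}+n\be^{-1}\log\be$, we have
\[
G_\eps(u)=\EE\bigl[F^{\eps}_{\wt x}F^{\eps}_u\bigr]+c\bigl(\EE F^{\eps}_{\wt x}+\EE F^{\eps}_u\bigr)+c^2 .
\]
By the observation at the start of this appendix, $u\mapsto\EE[\be^{-1}\log\cZ_u^\eps]$ is constant. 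So the correction terms contribute nothing to $u$-derivatives, and differentiating $G_\eps$ is the same as differentiating $\EE[F^\eps_{\wt x}F^\eps_u]$. The same holds at $\eps=0$.

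Fix a compact interval $K\Subset\R\setminus\{\wt x\}$ with $x$ in its interior. The plan has three steps.

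\textbf{Step 1: differentiate under the expectation for fixed $\eps>0$.} Lemma~\ref{pf_uxeps1} gives $u\mapsto\log\cZ_u^\eps$ as $C^1$ almost surely. To exchange derivative and expectation, I will dominate $\sup_{u\in K}|\partial_u\log\cZ_u^\eps|$ in $L^2(\Omega)$. The derivative equals a $Q_u^\eps$-average of $\xi^\eps$-values. By the bound in the proof of Lemma~\ref{pf_uxeps1}, it is at most $C_\eps M_\theta\int(1+|u|+\ga_n)^\theta Q_u^\eps(d\ga)$. The endpoint moment $\int\ga_n\,Q_u^\eps(d\ga)$ is controlled through the $\lambda$-tilted partition functions in the proof of Proposition~\ref{prop_MD}(i). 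One remaining point is uniformity in $u$. For this I would bound $\sup_{u\in K}\log Z_{u}^{\al,\be,\eps}(\lambda)$ directly using $M_\theta$ as in Lemma~\ref{lem_MD1}(iii), and get the lower bound for the infimum over $u$ via Jensen on $U_0$ plus a Gaussian sup bound, or simply use $M_\theta$ again. Combined with \eqref{LP_logZRL} and Cauchy--Schwarz, this shows that $G_\eps$ is $C^1$ on $K$ with $G_\eps'(u)=\EE[\be^{-1}\log\cZ_{\wt x}^\eps\,\partial_u\be^{-1}\log\cZ_u^\eps]=\be^{-2}(T_{1,\eps}+T_{2,\eps})(u,\wt x)$.

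\textbf{Step 2: uniform convergence of the derivatives.} Here the already-proved identities \eqref{T2_Xi} and \eqref{T1_pre} express $G_\eps'$ through bounded kernels. These prelimit identities rely on the Malliavin computations and the cancellations in Lemmas~\ref{cancel_b} and~\ref{cancel_bd}, whose use is justified for each $u$. Proposition~\ref{T_uc} then gives uniform convergence on $K$ of $G_\eps'$ to an explicit function $g$. Moreover, $g$ is continuous on $K$, being a uniform limit of continuous functions.

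\textbf{Step 3: convergence of the functions and conclusion.} By \eqref{LPcvg_eps} and Cauchy--Schwarz, $G_\eps(u)\to G(u)$ pointwise. Then \cite[Theorem~7.17]{Rudin} gives that $G$ is differentiable on the interior of $K$ with $G'=g=\lim_\eps G_\eps'$. Since $G_\eps'(x)$ is exactly the expectation on the right of \eqref{app_derlim}, this proves the claim.

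The main obstacle is the domination in Step~1, namely the uniform-in-$u$ moment of $\int\ga_n^\theta\,dQ_u^\eps$ times the growth constant. If the tilting argument becomes awkward, I would instead avoid pointwise differentiation entirely. The idea is to write $G_\eps(b)-G_\eps(a)=\EE[\ldots\int_a^b\partial_u\ldots\,du]$ using pathwise $C^1$ and Fubini, which needs only an $L^1(\Omega\times K)$ bound that follows from the fixed-$u$ estimates and \eqref{expec_Q}. After that, the fundamental theorem of calculus together with the continuity of $G_\eps'$ from Step~2 finishes the argument.
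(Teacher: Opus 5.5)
Your proposal is correct and follows the paper's proof essentially step for step. In both, one differentiates under $\EE$ for fixed $\eps$ using the growth bound from Lemma~\ref{pf_uxeps1} and the endpoint moments, identifies $G_\eps'$ with $\frac1\be(T_{1,\eps}+T_{2,\eps})$ (the factor is $\be^{-1}$, not $\be^{-2}$), gets uniform convergence of $G_\eps'$ from Proposition~\ref{T_uc} and pointwise convergence of $G_\eps$ from \eqref{LPcvg_eps}, and concludes with Rudin's Theorem~7.17 together with the $u$-independence of $\EE[\be^{-1}\log\cZ_u]$. That same $u$-independence means $G_\eps\to G$ only up to an additive constant, which is harmless.

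The only variation is in Step~1, and your version works. You dominate $\sup_{u\in K}|\partial_u\log\cZ_u^\eps|$ inside the expectation, using $M_\theta$-uniform bounds on $\log Z_u^{\al,\be,\eps}(\lambda)$. The paper needs less: only $\sup_{u}\|\partial_u \be^{-1}\log\cZ_u^\eps\|_{L^p}<\infty$, together with uniform integrability of the difference quotients.
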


\begin{proof}
  Fix a compact interval
  $I\Subset\R\setminus\{\wt x\}$ containing $x$ in its interior, and
  put
  \[
    \widehat F_u^\eps=\frac1\be\log \cZ_u^\eps,
    \qquad
    \widehat F_u=\frac1\be\log \cZ_u,
    \qquad
    G_\eps(u)=\EE[\widehat F_{\wt x}^\eps\widehat F_u^\eps].
  \]
  For fixed $\eps>0$, Lemma~\ref{pf_uxeps1} gives the pathwise
  derivative.  Its proof and Jensen's inequality also give
  \begin{align}\label{moll_Dmom}
    \left|
      \frac{\partial}{\partial u}\widehat F_u^\eps
    \right|
    \leq C_{\eps,I}M_\theta
      \left(1+\int_{\DN}\ga_nQ_u^\eps(d\ga)\right),
    \qquad u\in I.
  \end{align}
  Hence, for every $p<\infty$,
  $
    \sup_{u\in I}  \big\|  \frac{\partial}{\partial u}\widehat F_u^\eps   \big\|_{L^p(\Omega)}<\infty
  $
  by Lemma~\ref{B_poly}, H\"older's
  inequality, and \eqref{expec_Q}.  The moment estimate
  \eqref{LP_logZRL} applies to $\widehat F_u^\eps$ as well, because it differs from the normalization in Appendix~A by the
  deterministic constant $n\log\be$.  The preceding bounds make the
  difference quotients uniformly integrable, so differentiation under
  $\EE$ is justified.  Using the definitions in Section~\ref{proof_sec}, we obtain
  \begin{align}\label{G_der}
    G_\eps'(u)
    =\EE\left[  \widehat F_{\wt x}^\eps   \frac{\partial}{\partial u}\widehat F_u^\eps  \right] 
    =\frac1\be T_{1,\eps}(u,\wt x)    +\frac1\be T_{2,\eps}(u,\wt x).
  \end{align}

  Proposition~\ref{T_uc} shows that $G_\eps'$ converges uniformly on
  $I$ as $\eps\downarrow0$.  At one fixed $u_0\in I$,
  \eqref{LPcvg_eps} and Cauchy--Schwarz give
  $
    G_\eps(u_0) \to   \EE[\widehat F_{\wt x}\widehat F_{u_0}]
  $
  as $\eps\to 0$.
  The uniform convergence theorem for derivatives
  \cite[Theorem~7.17]{Rudin} implies that $G_\eps$ converges uniformly
  on $I$ to a differentiable function and that its derivative is the
  uniform limit of $G_\eps'$.  On the other hand,
  \eqref{LPcvg_eps} gives, for every $u\in I$,
  $
    G_\eps(u) \to 
    \widehat G(u)=\EE[\widehat F_{\wt x}\widehat F_u]
  $
    as $\eps\to 0$.
  Hence the limiting function is $\widehat G$, and
  \begin{align}\label{Ghat_lim}
    \widehat G'(x)
    =\lim_{\eps\downarrow0}G_\eps'(x).
  \end{align}

  Finally,
    $F_{u}^{\al,\be}
    =\widehat F_u-\frac n\be\log\be.$
  As observed at the beginning of this appendix,
  $u\mapsto\EE[\widehat F_u]$ is constant.  Therefore the spatial
  derivative of
  $\EE[F_{\wt x}^{\al,\be}F_{u}^{\al,\be}]$ equals
  $\widehat G'(u)$.  Combining this with
  \eqref{G_der} and
  \eqref{Ghat_lim} proves
  \eqref{app_derlim}.
\end{proof}

\medskip
\noi
$\bullet$ {\bf Acknowledgments.}
We thank Yu Gu for insightful discussions
and for his suggestion of providing
another proof of Burke's property,
which is now the content of 
Corollary \ref{buse_cor}. Generative AI tools were used to assist with grammar editing, LaTeX formatting, and preliminary literature searches. The authors verified all calculations and references, and take full responsibility for the content of the paper.

\end{document}